\newcommand{\al}{\alpha}
\newcommand{\be}{\beta}
\newcommand{\la}{\lambda}
\newcommand{\de}{\delta}
\newcommand{\eps}{\varepsilon}
\newcommand{\bx}{\bar x}
\newcommand{\by}{\bar y}
\newcommand{\iv}{^{-1} }
\newcommand {\R} {\mathbb R}
\newcommand {\B} {\mathbb B}
\newcommand {\gph} {{\rm gph}\,}%Graph
\newcommand {\dom} {{\rm dom}\,}
\newcommand {\Er} {{\rm Er}\,}
\newcommand {\sd} {\partial}
\newcommand{\folgt}{$ \Rightarrow\ $}
\def\nbh{neighbourhood}
\def\es{\emptyset}
\def\lsc{lower semicontinuous}
\def\LHS{left-hand side}
\def\SVM{set-valued mapping}
\def\EVP{Ekeland variational principle}
\def\Fr{Fr\'echet}
\newcommand{\norm}[1]{\left\Vert#1\right\Vert}
\newcommand{\ang}[1]{\left\langle #1 \right\rangle}
\newcommand{\qdtx}[1]{\quad\mbox{#1}\quad}
\newcounter{mycount}
\newcommand {\Erq} {{\rm Er_q}\,}
\newcommand {\clmq} {{\rm clm_q}\,}
\title
{H\"{o}lder Error Bounds and H\"{o}lder Calmness with Applications to Convex Semi-Infinite Optimization
\thanks{The research of the first and second authors was supported by the Australian Research Council, project DP160100854.
{The first author benefited from the support of the FMJH Program PGMO and from the support of EDF.}
The research of the second author was also supported by MINECO of Spain
and FEDER of EU, grant MTM2014-59179-C2-1-P.
The research of the third author was supported by the Research Grants Council of Hong Kong (PolyU 152342/16E).
The research of the fourth author was supported by the National Natural Science Foundation of China, grants 11771384 and 11461080.
}
}
\titlerunning
{H\"{o}lder Error Bounds and H\"{o}lder Calmness}
\author{
Alexander Y. Kruger
\and Marco A. L\'{o}pez
\and Xiaoqi Yang
\and Jiangxing Zhu
}
\institute{
Alexander Y. Kruger (\Letter\,) \at
Centre for Informatics and Applied Optimization, Federation University Australia, Ballarat, Australia\\
\email{a.kruger@federation.edu.au}
\and
Marco A. L\'{o}pez \at
Department of Mathematics, University of Alicante, Alicante, Spain and Centre for Informatics and Applied Optimization, Federation University Australia, Ballarat, Australia\\
\email{marco.antonio@ua.es}
\and
Xiaoqi Yang \at
Department of Applied Mathematics, The Hong Kong Polytechnic University, Hong Kong\\
\email{xiao.qi.yang@polyu.edu.hk}
\and 
Jiangxing Zhu \at
Department of Mathematics, Yunnan University, Kunming 650091, China\\
\email{jiangxingzhu@yahoo.com}
}
\date{Received: date / Accepted: date}
\journalname{}
\begin{document}

\maketitle

\begin{abstract}
Using techniques of variational analysis,
%AK3/06/18
%and in terms of the subdifferential,
necessary and sufficient subdifferential conditions for H\"{o}lder error bounds are investigated and some new estimates for the corresponding modulus are
obtained.
%provided in this paper. In terms of the partially H\"{o}lder contingent derivative, we give a characterization for H\"{o}lder isolated calmness and also give a relationship between the modulus of H\"{o}lder isolated calmness and the outer norm of the concerned mapping. We also show that H\"older isolated calmness enjoys stability properties under perturbations by functions with small H\"older Hadamard derivatives; moreover, a second-order characterizations for H\"{o}lder error bound is provided. Appealing to these results,
As an application, we
%AK5/06/18
{consider the setting of convex semi-infinite optimization and}
give a characterization of the H\"{o}lder calmness of the argmin
mapping in terms of the level set mapping (with respect to the objective
function) and a special supremum function. We also estimate the H\"{o}lder
calmness modulus of the argmin mapping in the framework of linear
programming.
%AK19/04/18
%In addition,
%Our results
%reduce to
%recapture
%some
%main
%known
%results on
%H\"older error bounds and calmness.
%(in the special cases of $q=1$) in the literature.
\end{abstract}

%\author{Jen-Chih Yao \footnotemark[2]\qquad  Xi Yin Zheng \footnotemark[3]\qquad Jiangxing Zhu \footnotemark[3]}
%\renewcommand{\thefootnote}{\fnsymbol{footnote}}
%\footnotetext[1]{This research was supported by the
%National Natural Science Foundation of People's Republic of China (Grant No. 11371312) and IRTSTYN.}
%\footnotetext[2]{Center for General Education, China Medical University, Taichung 40402, Taiwan; Research Center for Interneural Computing, China Medical University Hospital, Taichung 40447, Taiwan (yaojc@mail.cmu.edu.tw).}
%\footnotetext[3]{Department of Mathematics, Yunnan University, Kunming 650091, People's Republic of China (xyzheng@ynu.edu.cn, jiangxingzhu@yahoo.com).}

\keywords{H\"older error bounds \and H\"older calmness \and Convex programming \and Semi-infinite programming}

\subclass{49J53 \and 90C25 \and 90C31 \and 90C34}

%\tableofcontents
\section{Introduction}

This paper mainly concerns the study and some applications of the notions of
H\"{o}lder error bounds and H\"{o}lder calmness.

Given
%AK21/04/18
%$q\in(0,1]$, for an extended-real-valued function $f:X\to\mathbb{R}\cup\{+\infty\}$ on an Asplund space $X$ and $\bar x\in [f\leq 0]$,
an extended-real-valued function $f:X\to\mathbb{R}\cup\{+\infty\}$ on a metric space $X$, a point $\bar x\in[f\leq0]:=\{x\in X\mid f(x)\leq0\}$ and a number $q>0$,
we say that $f$ admits a \emph{$q$-order local error bound} at $\bar x $, if there exist positive numbers $\tau$ and $\delta$ such that
\begin{equation}\label{EB}
\tau d(x, [f\leq 0])\leq [f(x)]_+^q, \quad \forall x\in B_\de(\bx),
\end{equation}
where
%AK21/04/18
%$[f\leq 0]
%AK19/04/18
%:=\{x\in X\mid f(x)\leq 0\}$
%denotes
%is
%the lower level set of $f$
%\begin{equation*}
%[f\leq 0]:=\{x\in X\mid f(x)\leq 0\}\quad \mbox{and}\quad
%[f(x)]_+:=\max\{f(x), 0\}.
%\end{equation*}
%and
$[f(x)]_+:=\max\{f(x),0\}$.
%AK21/04/18

The supremum of all $\tau>0$ in \eqref{EB} is called the \emph{modulus}
{(\emph{conditioning rate} \cite{Pen13})}
of $q$-order error bounds of $f$ at $\bx$ and is denoted by $\Erq f(\bx)$;
explicitly,
\begin{equation}\label{Er}
\Erq f(\bx)
:=\liminf_{x\to\bar x,\;f(x)>0} \frac{f(x)^q}{d(x,[f\leq 0])}.
\end{equation}
It provides a quantitative characterization of error bounds.
The absence of error bounds, i.e., the situation when \eqref{EB} does not hold for any $\tau>0$, is signaled by $\Erq f(\bx)=0$.
When $q=1$, we write simply $\Er f(\bx)$ instead of Er$_1\,f(\bx)$.

The case $q=1$ corresponds to the conventional \emph{linear error bounds}.
Linear error bounds have been well studied, especially in the last 15 years, because of numerous applications; see, e.g., \cite{Aze03,AzeCor04,FabHenKruOut10,FabHenKruOut12, KruNgaThe10,Iof79,Kru15,MenYan12,NgaThe09, Jou00,JouYe05,Ye98,WuYe02,WuYe03}.
The study of H\"{o}lder ($q$-order) and more general nonlinear error bounds started relatively recently thanks to more subtle applications, where conventional linear estimates do not hold; see \cite{ZhaNgZheHe16,YaoZhe16,NgaTroTin17,WuYe02, Iof17}.

Many authors have studied seemingly more general than error bounds, but in a sense equivalent to them properties of
nonlinear subregularity and
calmness of \SVM s, which
are
of great importance
%not only in consistence or
for
optimization
%problems, but has a
%deep relationship to
as well as
subdifferential calculus, optimality conditions,
stability and sensitivity issues, convergence of numerical methods, etc;
interested readers are referred to \cite{Kru15.2,Kru16,NgaThe09,LiMor12,YaoZhe16, ZhaNgZheHe16,GayGeoJea11,MorOuy15,NgaTroTin17, KlaKruKum12,Kum09,Kla94, NgaTroThe16} and the references
therein.
Sufficient conditions for (nonlinear) error bounds generate sufficient conditions for (nonlinear) subregularity and calmness; see, e.g., \cite{Kru15,Kru15.2,Kru16}.

Given
a \SVM\ $S:Y\rightrightarrows X$ between metric spaces $Y$ and $X$, a point $(\by,\bx)\in\gph(S)
%:=\{(y,x)\in Y\times X\mid y\in S(x)\}
$ and a number $q>0$,
we say that $S$ is $q$-\emph{order calm}
(or possesses $q$-\emph{order calmness} property)
at $(\bar{y},\bar{x})$ if there exist a number $\tau>0$ and neighborhoods $U$ of $\bar{x}$ and $V$ of $\bar{y}$ such that
\begin{equation}
\tau d(x,S(\bar{y}))\leq d(y,\bar{y})^{q},\quad \forall y\in V\;\mbox{and}\; x\in
S(y)\cap U.  \label{1.1}
\end{equation}
If, additionally, $\bx$ is an isolated point in $S(\bar y)$, i.e., $S(\bar y)\cap U=\{\bar x\}$, then we say that $S$ possesses $q$-\emph{order isolated calmness} property

The supremum of all $\tau>0$ in \eqref{1.1} is called the $q$-\emph{order calmness modulus} of $S$ at $(\bar{y},\bar{x})$ and is denoted by $\clmq S(\bar{y},\bar{x})$;
explicitly,
\begin{equation}
\clmq S(\bar{y},\bar{x}):=\liminf_{\substack{ y\rightarrow \bar{y}  \\ x\rightarrow \bar{x},\;x\in S(y)}}\frac{d(y,\bar{y})^{q}}{%
d(x,S(\bar{y}))}
%AK19/04/18
=\liminf_{x\rightarrow \bar{x}}\frac{%
d(\bar{y},S^{-1}(x))^q}{d(x,S(\bar{y}))}.  \label{1.6}
\end{equation}
It provides a quantitative characterization of the calmness property.
Following the lines
%in
of
\cite[Theorem 2.2]{DonRoc04}, it is
%AK19/04/18
easy
to verify that $\clmq S(\bar{y},\bar{x})$ coincides with the modulus of \emph{$q$-order metric subregularity} of $%
S^{-1}$ at $(\bar{x},\bar{y})$.

Using techniques of variational analysis, we continue the study of necessary and sufficient subdifferential conditions for H\"{o}lder error bounds, particularly merging the conventional approach with the new advancements proposed recently in \cite{YaoZhe16}.
%AK10/06/18
We formulate a general lemma collecting the main arguments used in the proofs of the subdifferential sufficient error bound conditions and demonstrate that both linear and H\"older type conditions, both conventional and those in \cite{YaoZhe16}, can be obtained as direct consequences of this lemma.

Moreover, we
%prove the equivalence of the error bound characterizations in \cite{YaoZhe16} to
{clarify the relationship between the error bound characterizations in \cite{YaoZhe16} and}
those obtained using the conventional approach.
Some new estimates for the modulus of $q$-order error bounds are obtained.
The main sufficient subdifferential conditions are combinations of two assertions: in Asplund spaces in terms of Fr\'echet subdifferentials and for convex functions in general Banach spaces.

%Adopting the partially H\"{o}lder contingent derivative (see its definition in Section 4), we provide necessary and sufficient conditions for H\"{o}lder isolated calmness. We also show that H\"older isolated calmness enjoys stability properties under perturbations by functions with small H\"older Hadamard derivatives; moreover, we give a second-order characterizations for H\"{o}lder error bounds.

In \cite{CanKruLopParThe14}, the authors compute or estimate the calmness modulus of the
argmin mapping of linear semi-infinite optimization problems under canonical
perturbations (see Section~\ref{S5} for its explicit meaning). Motivated by this
and as
%AK19/04/18.
%applications,
an application,
the last goal of the paper is to study in detail H\"{o}%
lder calmness in convex semi-infinite programs. In this setting we clarify
the relationship among the H\"{o}lder calmness of the solution mapping $%
\mathcal{S}$, the (lower) level set mapping $\mathcal{L}$, and the H\"{o}%
lder error bound of a special supremum function (see their definitions in
Section~\ref{S5}). Moreover, we also estimate the H\"{o}lder calmness modulus of
the argmin mapping in the framework of linear programming.

The rest of the paper is organized as follows: Section~\ref{S2} summarizes some
preliminary facts from variational analysis and generalized differentiation
widely used in the formulations and proofs of our main results.
In Section~\ref{S3} we establish and discuss some necessary and sufficient subdifferential conditions for H\"{o}lder error bounds.
%AK19/04/18.
%together with the corresponding moduli.
%Section 4 gives a characterization for H\"{o}lder isolated calmness in terms of partially H\"{o}lder contingent derivative and also gives a relationship between the modulus of H\"{o}lder isolated calmness and the outer norm of the concerned mapping.
The last
%part of this paper,
Section~\ref{S5}
devoted to convex semi-infinite optimization, shows the equivalence among the H\"{o}lder calmness of the (lower) level set and argmin mappings and the H\"{o}lder error bounds of a special supremum function, and also provides an
estimate
%AK19/04/18.
%on
of
H\"{o}lder calmness of the argmin mapping under some particular
conditions.

{The paper is dedicated to
our friend Professor Asen Dontchev on the occasion of his 70$^{th}$ birthday}

\section{Preliminaries}\label{S2}

In this section, we
summarize some fundamental tools of variational analysis and nonsmooth
optimization.

Our basic terminology and notation are standard, see, e.g., \cite{Mor06.1, Cla83,ClaLedSteWol98,RocWet98,Sch07,DonRoc14,Iof17}.
%AK20/04/18
%for more details.\newline
Throughout the paper, $X$ and $Y$ are either metric or normed vector spaces.
We use the standard notations $d(\cdot,\cdot)$ and $\norm{\cdot}$ for the distance and the norm in any space.
For $x\in X$ and $\delta>0$, $B_\de(x)$
denotes the open ball
centered at $x$ with radius $\delta$.
Given a set $A$ and a point $x$ in the same space,
$d(x,A):=\inf_{a\in A}d(x,a)$ is
the distance from $x$ to $A$.
In particular, $d(x,\emptyset)=+\infty$ for any $x$.
If $X$ is a normed vector space, its
topological dual is denoted by $X^*$, while $\langle\cdot,\cdot\rangle$ denotes the bilinear form defining the pairing between the two spaces.
We denote by $\B$ and $\B^*$ the open unit
balls in a normed vector space and its dual, respectively.

Given an extended-real-valued function $f: X \rightarrow \mathbb{R}\cup
\{+\infty\}$, we denote by $\dom f$
%and $\mathrm{epi}(f)$ the
its domain:
%and the epigraph of $f$, respectively:
%that is,
$\dom f:=\{x\in X\mid f(x)<+\infty\}$.
For a
%AK20/04/18
%multifunction (or
set-valued mapping $\varPhi:X\rightrightarrows Y$,
the graph and the domain of $\varPhi$ are defined, respectively, by
\begin{equation*}
\gph(\varPhi):=\{(x,y)\in X\times Y\mid x\in X,\; y\in \varPhi(x)\} \quad %
\mbox{and}\quad \dom\varPhi:=\{x\in X\mid\varPhi(x)\neq \emptyset\}.
\end{equation*}
The inverse $F\iv:Y\rightrightarrows X$ of $F$ (which always exists with possibly empty values at some $y$) is defined by
\begin{equation*}
F\iv(y):=\{x\in X\mid y\in F(x)\},\quad y\in Y.
\end{equation*}
Obviously, $\dom F\iv=F(X)$.

Recall that the Fr\'{e}chet subdifferential of $f$ at $x\in\dom f$
is defined as
\begin{equation*}
%AK20/04/18
%\widehat
{\partial}f(x):=\left\{x^*\in X^*\mid \;\liminf\limits_{y\rightarrow x}
\frac{f(y)-f(x)-\langle x^*,y-x\rangle}{\|y-x\|}\geq0\right\},
\end{equation*}
%AK20/04/18
%where
and
$
%AK20/04/18
%\widehat
\partial f(x):=\emptyset $ if $x\notin\dom f$. It is
well-known that the set $
%AK20/04/18
%\widehat
\partial f(x)$ reduces to the classical
subdifferential
%AK20/04/18
%(denoted by $\partial f(x)$)
of convex analysis
%AK20/04/18
%for the case of convex functions.
if $f$ is convex.

The proofs of the main results rely on certain fundamental results of variational analysis: the \emph{Ekeland variational principle} (Ekeland \cite{Eke74}; see also \cite{Mor06.1,DonRoc14,Iof17}) and several kinds of \emph{subdifferential sum rules}.
Below we provide these results for completeness.

\begin{lemma}[Ekeland variational principle] \label{Ek}
Suppose $X$ is a complete metric space, $f:X\to\R\cup\{+\infty\}$ is lower semicontinuous and bounded from below, $\varepsilon>0$ and $\lambda>0$. If
$$
f(\bx)<\inf_X f + \varepsilon,
$$
then there exists an $\hat x\in X$ such that

{\rm (a)} $d(\hat x,\bx)<\lambda $,

{\rm (b)} $f(\hat x)\le f(\bx)$,

{\rm (c)} $f(x)+(\varepsilon/\lambda)d(x,\hat x)\ge f(\hat x)$ for all $x\in X$.
\end{lemma}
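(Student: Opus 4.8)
The plan is to establish the principle by the classical partial-order / nested-sets argument, exploiting completeness of $X$, lower semicontinuity of $f$, and boundedness from below in exactly the three places where the hypotheses are needed. Setting $\alpha := \varepsilon/\lambda > 0$, I would first introduce on $X$ the relation defined by $x \preceq y$ whenever $f(x) + \alpha\, d(x,y) \le f(y)$, and check that it is a partial order: reflexivity is immediate, antisymmetry follows by adding the two defining inequalities (forcing $d(x,y)=0$), and transitivity is where the triangle inequality for $d$ enters. The point $\hat x$ we seek will be a $\preceq$-minimal element lying below $\bx$; property (c) is then precisely the statement that no $x \ne \hat x$ satisfies $x \preceq \hat x$.

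To produce such a minimal element I would build a sequence by approximate minimization over successive lower sets. Put $x_0 := \bx$ and, writing $S_n := \{x \in X : x \preceq x_n\}$ and $m_n := \inf_{S_n} f$ (finite since $f$ is bounded below and $x_n \in S_n$), choose $x_{n+1} \in S_n$ with $f(x_{n+1}) \le m_n + \eta_n$ for some tolerances $\eta_n \to 0$. Lower semicontinuity of $f$ makes each $S_n$ closed, and transitivity of $\preceq$ makes them nested, $S_{n+1} \subseteq S_n$.

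The crux of the argument is to show that $\operatorname{diam} S_n \to 0$. For this I would take any $x \in S_{n+1}$: then $x \preceq x_{n+1}$ gives $\alpha\, d(x, x_{n+1}) \le f(x_{n+1}) - f(x)$, while $x \in S_n$ gives $f(x) \ge m_n$ and the choice of $x_{n+1}$ gives $f(x_{n+1}) \le m_n + \eta_n$; combining these bounds $d(x, x_{n+1})$ by $\eta_n/\alpha$, so $\operatorname{diam} S_{n+1} \le 2\eta_n/\alpha \to 0$. By completeness of $X$ (Cantor's nested-set theorem applied to the closed, nested, diameter-vanishing sets $S_n$) the intersection $\bigcap_n S_n$ is a single point $\hat x$.

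It remains to verify the three conclusions. Since $\hat x \in S_0$ we have $\hat x \preceq \bx$, i.e. $f(\hat x) + \alpha\, d(\hat x, \bx) \le f(\bx)$, which yields (b) at once; combining it with $f(\hat x) \ge \inf_X f$ and the hypothesis $f(\bx) < \inf_X f + \varepsilon$ gives $\alpha\, d(\hat x, \bx) < \varepsilon$, i.e. $d(\hat x, \bx) < \lambda$, which is (a). For (c), suppose $x \preceq \hat x$; since $\hat x \preceq x_n$ for every $n$, transitivity gives $x \in \bigcap_n S_n = \{\hat x\}$, so $x = \hat x$. Thus for every $x \ne \hat x$ the relation $x \preceq \hat x$ fails, which unwinds to $f(x) + \alpha\, d(x, \hat x) > f(\hat x)$, and together with the trivial equality at $x = \hat x$ this is exactly (c). I expect the main obstacle to be the diameter-shrinking step, since it is the one point where the order relation, the approximate-minimization choice, and the lower-semicontinuity-based closedness must be made to interact; the rest of the argument is bookkeeping.
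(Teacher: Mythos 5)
The paper does not prove Lemma~\ref{Ek} at all: it is quoted as a known fundamental tool, with the proof delegated to the literature (Ekeland \cite{Eke74}; see also \cite{Mor06.1,DonRoc14,Iof17}). So there is nothing in the paper to compare your argument against, and your proposal should be judged on its own merits. It is correct, and it is the classical self-contained proof: the Bishop--Phelps-type order $x \preceq y \Leftrightarrow f(x)+\alpha d(x,y)\le f(y)$, approximate minimization over the lower sets $S_n$, and Cantor's nested-set theorem. All the load-bearing steps check out: the sets $S_n$ are closed because $x\mapsto f(x)+\alpha d(x,x_n)$ is lower semicontinuous, nested by transitivity, and your diameter bound $\alpha d(x,x_{n+1})\le f(x_{n+1})-f(x)\le (m_n+\eta_n)-m_n=\eta_n$ for $x\in S_{n+1}$ is exactly right (boundedness below guarantees $m_n$ is finite); completeness then yields the singleton intersection, and (a), (b), (c) follow as you describe, with the strict inequality in (a) correctly traced back to the strict hypothesis $f(\bx)<\inf_X f+\varepsilon$. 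One small caveat: your claim that $\preceq$ is antisymmetric fails on $X\setminus\dom f$, since any two points with $f=+\infty$ dominate each other; the clean fix is either to restrict the relation to $\dom f$, or simply to note (as your argument implicitly does) that antisymmetry is never used --- only reflexivity, transitivity and the diameter argument are, and the entire construction stays inside $S_0$, where $f\le f(\bx)<+\infty$. With that cosmetic adjustment the proof is complete.
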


\begin{lemma}[Subdifferential sum rules] \label{SR}
Suppose $X$ is a normed linear space, $f_1,f_2:X\to\R\cup\{+\infty\}$, and $\bx\in\dom f_1\cap\dom f_2$.

{\rm (i) \bf Fuzzy sum rule}. Suppose $X$ is Asplund,
$f_1$ is Lipschitz continuous and
$f_2$
is lower semicontinuous in a neighbourhood of $\bar x$.
Then, for any $\varepsilon>0$, there exist $x_1,x_2\in X$ with $\|x_i-\bar x\|<\varepsilon$, $|f_i(x_i)-f_i(\bar x)|<\varepsilon$ $(i=1,2)$, such that
$$
\partial (f_1+f_2) (\bar x) \subset \partial f_1(x_1) +\partial f_2(x_2) + \varepsilon\B^\ast.
$$

%{\rm (ii) \bf Differentiable sum rule}. Suppose
%$f_1$ is Fr\'echet differentiable at $\bx$.
%Then,
%$$
%\partial (f_1+f_2) (\bar x) = \nabla f_1(\bx) +\partial f_2(\bx).
%$$

{\rm (ii) \bf Convex sum rule}. Suppose
$f_1$ and $f_2$ are convex and $f_1$ is continuous at a point in $\dom f_2$.
Then
$$
\partial (f_1+f_2) (\bar x) = \sd f_1(\bx) +\partial f_2(\bx).
$$
\end{lemma}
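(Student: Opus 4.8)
The plan is to prove the two parts by entirely different mechanisms: part (ii) is a convex separation argument (the Moreau--Rockafellar theorem), while part (i) is a genuine variational-analytic result whose difficulty is tied to the Asplund property. I would dispose of part (ii) first, as a warm-up. The inclusion $\partial f_1(\bar x)+\partial f_2(\bar x)\subseteq\partial(f_1+f_2)(\bar x)$ is immediate by adding the two defining subgradient inequalities, so the work is in the reverse inclusion. Given $x^*\in\partial(f_1+f_2)(\bar x)$, I would encode the subgradient condition as a separation problem between the epigraph of $f_1-\langle x^*,\cdot\rangle$ and the hypograph of $-f_2$ in $X\times\R$. The continuity of $f_1$ at a point of $\dom f_2$ guarantees that one of these convex sets has nonempty interior, so the Hahn--Banach theorem produces a nonvertical separating hyperplane; reading off its slope yields $x_1^*\in\partial f_1(\bar x)$ and $x_2^*\in\partial f_2(\bar x)$ with $x^*=x_1^*+x_2^*$. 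The interior/continuity hypothesis is precisely what excludes the degenerate vertical separation that would otherwise break the argument.

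For part (i), the fuzzy sum rule, the strategy is to reduce an abstract subgradient of the sum to a near-minimization and then to invoke a Fr\'echet-smooth variational principle available on Asplund spaces. After subtracting the linear functional I may assume $0\in\partial(f_1+f_2)(\bar x)$, so that $f_1+f_2$ nearly attains a local minimum at $\bar x$ up to a term $o(\|x-\bar x\|)$. The decisive device is \emph{decoupling}: I would pass to the product space $X\times X$ and the function $(x,y)\mapsto f_1(x)+f_2(y)+\tfrac{n}{2}\|x-y\|^2$, whose near-minimizers have their two components close once $n$ is large. Applying the Ekeland variational principle (Lemma~\ref{Ek}) to extract an exact minimizer of a slightly perturbed functional, and then a smooth variational principle whose validity on Asplund spaces—after separable reduction and a Fr\'echet-smooth renorming—is exactly where the Asplund hypothesis is consumed, I obtain an honest minimizer of a smoothly perturbed functional at a point $(x_1,x_2)$. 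Fermat's rule there, together with smoothness, yields elements of $\partial f_1(x_1)$ and $\partial f_2(x_2)$ in which the coupling derivative $n(x_1-x_2)$ enters with opposite signs and cancels upon summation, leaving a sum within $\varepsilon\B^*$ of $x^*$; the Lipschitz continuity of $f_1$ then controls the values, delivering $\|x_i-\bar x\|<\varepsilon$ and $|f_i(x_i)-f_i(\bar x)|<\varepsilon$.

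I expect the main obstacle to be part (i): the passage from an Ekeland approximate minimizer to a point carrying genuine Fr\'echet subgradients requires the smooth variational principle, and hence the full strength of the Asplund property, while the simultaneous bookkeeping on $\|x_i-\bar x\|$, on $|f_i(x_i)-f_i(\bar x)|$, and on the residual $\varepsilon\B^*$ is delicate. Since both statements are classical, the cleaner route I would ultimately adopt in the paper is to cite the fuzzy sum rule from the standard references (e.g., Mordukhovich \cite{Mor06.1}) and the convex rule from convex analysis, reconstructing the separation proof of (ii) in full only if a self-contained treatment is wanted.
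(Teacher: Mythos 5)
The paper offers no proof of this lemma: it is a preliminary fact quoted from the literature, with part (i) attributed to Fabian \cite{Fab89} (cf.\ \cite[Rule~2.2]{Kru03}, \cite[Theorem~2.33]{Mor06.1}) and part (ii) to \cite[Theorem~0.3.3]{IofTik79} and \cite[Theorem~2.8.7]{Zal02}, so your closing recommendation to simply cite these sources is exactly the paper's approach. Your sketches themselves are faithful outlines of the standard arguments---Moreau--Rockafellar separation for (ii), and decoupling on $X\times X$ plus a smooth variational principle with separable reduction for (i)---and they consume the hypotheses (continuity of $f_1$ at a point of $\dom f_2$, the Asplund property, Lipschitz continuity of $f_1$) in the right places. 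The only nuance worth noting is that your reduction to $0\in\partial(f_1+f_2)(\bar x)$ yields the pointwise form of (i), with $x_1,x_2$ depending on the given subgradient, rather than the uniform inclusion stated here; but that is also what the cited references provide, and the pointwise form is all the paper ever uses, namely for the single subgradient $0\in\partial(f+g)(\hat u)$ in the proof of Lemma~\ref{L1}.
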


The first sum rule in the lemma above is known as the \emph{fuzzy} or \emph{approximate} sum rule (Fabian \cite{Fab89}; cf., e.g., \cite[Rule~2.2]{Kru03}, \cite[Theorem~2.33]{Mor06.1}) for Fr\'echet subdifferentials in Asplund spaces.
The other one is an example of an \emph{exact} sum rule.
It is valid in arbitrary normed spaces.
For rule (ii) we refer the readers to \cite[Theorem~0.3.3]{IofTik79} and
\cite[Theorem~2.8.7]{Zal02}.

Recall that a Banach space is \emph{Asplund} if every continuous convex function on an open convex set is Fr\'echet differentiable on a dense subset \cite{Phe93}, or equivalently, if the dual of each its separable subspace is separable.
We refer the reader to \cite{Phe93,Mor06.1,BorZhu05} for discussions about and characterizations of Asplund spaces.
All reflexive, in particular, all finite dimensional Banach spaces are Asplund.

The following fact is an immediate consequence of the definition of the \Fr\ subdifferential (cf., e.g., \cite[Propositions~1.10]{Kru03}).

\begin{lemma}\label{L2.3}
Suppose $X$ is a normed vector space and $f:X\to\R\cup\{+\infty\}$.
If $\bx\in\dom f$ is a point of local minimum of $f$, then $0\in\sd f(\bx)$.
\end{lemma}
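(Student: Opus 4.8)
The plan is to verify directly, from the definition of the Fr\'echet subdifferential, that the zero functional $x^*=0$ satisfies the required liminf inequality. By the formula for $\sd f(\bx)$ recalled above, the statement $0\in\sd f(\bx)$ means precisely that
\[
\liminf_{y\to\bx}\frac{f(y)-f(\bx)-\langle 0,y-\bx\rangle}{\|y-\bx\|}\ge 0,
\]
and since $\langle 0,y-\bx\rangle=0$ this reduces to showing $\liminf_{y\to\bx}\big(f(y)-f(\bx)\big)/\|y-\bx\|\ge 0$. So the whole task is to control the sign of this one difference quotient.

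First I would invoke the hypothesis that $\bx$ is a local minimizer: there exists $\delta>0$ such that $f(y)\ge f(\bx)$ for all $y\in B_\delta(\bx)$. Because $\bx\in\dom f$, the value $f(\bx)$ is finite, so the difference $f(y)-f(\bx)$ is well defined in $\R\cup\{+\infty\}$ and, by the previous inequality, nonnegative on the whole ball $B_\delta(\bx)$. Consequently, for every $y\in B_\delta(\bx)$ with $y\neq\bx$ the numerator $f(y)-f(\bx)$ is $\ge 0$ while the denominator $\|y-\bx\|$ is strictly positive, and hence the quotient itself is $\ge 0$ on the punctured ball.

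Finally I would pass to the limit inferior as $y\to\bx$: the $\liminf$ of a quantity that is nonnegative on some punctured \nbh\ of $\bx$ is necessarily $\ge 0$, which is exactly the inequality needed to conclude $0\in\sd f(\bx)$. There is essentially no obstacle here — the argument is a direct unwinding of the definition — and the only point requiring care is that the assumption $\bx\in\dom f$ ensures $f(\bx)$ is finite, so that both the local-minimum inequality and the difference quotient genuinely make sense.
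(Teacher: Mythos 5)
Your proof is correct and matches the paper's approach: the paper gives no explicit argument, stating only that the lemma is ``an immediate consequence of the definition of the Fr\'echet subdifferential,'' and your direct unwinding of that definition --- nonnegativity of the difference quotient on a punctured ball around a local minimizer, hence nonnegative $\liminf$ --- is exactly that immediate consequence. Nothing is missing; the observation that $\bx\in\dom f$ makes $f(\bx)$ finite is the right (and only) point of care.
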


The next subdifferential \emph{chain rule} is
%AK24/05/18
%extracted from \cite[Proposition 2.1]{YaoZhe16} (see also \cite[Corollary~1.14.1]{Kru03}).
%interested readers could see \cite[Proposition~3]{Kru15.2} and \cite[Proposition~5.1]{Kru16} for its primal space versions.
a modification of \cite[Lemma~1]{NgaThe09} and \cite[Corollary~1.14.1]{Kru03}; see also \cite[Proposition~2.1]{YaoZhe16}.

\begin{lemma}\label{L2}
Suppose $X$ is a
%Banach
normed linear
space, $f:X\rightarrow\mathbb{R}\cup\{+\infty\}$ is lower semicontinuous and $\bar x\in\dom f$.
Suppose also that $\psi:\mathbb{R}\to
\mathbb{R}\cup\{+\infty\}$ is
%continuously
differentiable
%around
at
$f(\bar x)$ with $\psi^{\prime }(f(\bar x))>0$ and
nondecreasing on $[f(\bar x),+\infty)$.
%If either $X$ is Asplund or $f$ is convex, then
Then
\begin{equation*}
%AK20/04/18
%\widehat
\partial(\psi\circ f)(\bar x)=\psi^{\prime }(f(\bar x))
%AK20/04/18
%\widehat
\partial f(\bar x).
\end{equation*}
\end{lemma}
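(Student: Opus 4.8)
The plan is to prove the two inclusions
$\partial(\psi\circ f)(\bx)\subseteq\psi'(f(\bx))\partial f(\bx)$ and
$\psi'(f(\bx))\partial f(\bx)\subseteq\partial(\psi\circ f)(\bx)$ directly from the definition of the \Fr\ subdifferential, exploiting that this subdifferential depends only on the local behaviour of the functions near $\bx$. Write $\al:=\psi'(f(\bx))>0$ and $t_0:=f(\bx)$, and set $g:=\psi\circ f$, so that $g(\bx)=\psi(t_0)$. The engine of the argument is the first-order expansion granted by differentiability of $\psi$ at $t_0$: for every $\eps>0$ there is $\rho>0$ with
$$|\psi(s)-\psi(t_0)-\al(s-t_0)|\le\eps\,|s-t_0|\qquad(|s-t_0|<\rho),$$
which I will use to pass back and forth between increments of $g$ and increments of $f$ whenever $s=f(y)$ lies near $t_0$.

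For the inclusion $\supseteq$, fix $x^*\in\partial f(\bx)$; I aim to show $\al x^*\in\partial g(\bx)$, i.e.\ $\liminf_{y\to\bx}\|y-\bx\|^{-1}\big(g(y)-g(\bx)-\al\langle x^*,y-\bx\rangle\big)\ge0$. For $y$ near $\bx$ I would split according to the location of $s=f(y)$: (a) when $s$ stays bounded away from $t_0$ from above, monotonicity of $\psi$ on $[t_0,+\infty)$ together with $\psi'(t_0)>0$ forces $\psi(s)-\psi(t_0)$ to exceed a fixed positive constant, so the quotient tends to $+\infty$; (b) when $|s-t_0|<\rho$ I insert the expansion above together with the subdifferential inequality $s-t_0\ge\langle x^*,y-\bx\rangle-\eps\|y-\bx\|$, using $\al-\eps>0$ to preserve inequality directions, and obtain a bound of the form $\ge-\eps(\|x^*\|+\al+1)$. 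Lower semicontinuity of $f$ at $\bx$ is what guarantees that for $y$ close to $\bx$ one never falls outside these two cases, as it rules out $s\ll t_0$, where $\psi$ need not be monotone. Letting $\eps\downarrow0$ yields the claim. The reverse inclusion $\subseteq$ is symmetric: starting from $w^*\in\partial g(\bx)$ and putting $x^*:=w^*/\al$, I would run the same three-regime split, now solving the expansion for $s-t_0$ (legitimate precisely because $\al>0$) to convert the known lower bound on $g(y)-g(\bx)-\langle w^*,y-\bx\rangle$ into a lower bound on $f(y)-t_0-\langle x^*,y-\bx\rangle$.

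The main obstacle is organising the case analysis cleanly and confirming that the three hypotheses are each used exactly where they are indispensable: differentiability at $t_0$ controls the regime $f(y)\approx f(\bx)$ on both sides, where $f(y)$ may approach $f(\bx)$ at a rate unrelated to $\|y-\bx\|$; the monotonicity on $[t_0,+\infty)$, reinforced by $\psi'(t_0)>0$, disposes of values $f(y)$ bounded above $f(\bx)$; and lower semicontinuity is essential rather than cosmetic, since without it $f(y)$ could dip below $t_0$ into a region where $\psi$ is unconstrained, and a direct example then shows the asserted equality fails. The positivity of $\al=\psi'(f(\bx))$ is used twice over: to divide by $\al$ when defining $x^*=w^*/\al$, and, throughout, to multiply inequalities by $\al$ (or $\al\pm\eps$) without reversing them.
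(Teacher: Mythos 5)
Your proof is correct and complete in both inclusions: the three-regime split is exactly what is needed, and you use each hypothesis in the right place (differentiability of $\psi$ at $f(\bar x)$ for the regime $f(y)\approx f(\bar x)$, monotonicity on $[f(\bar x),+\infty)$ together with $\psi'(f(\bar x))>0$ to force a fixed positive increment when $f(y)$ is bounded above $f(\bar x)$, lower semicontinuity to exclude $f(y)$ dipping far below $f(\bar x)$, and positivity of $\psi'(f(\bar x))$ to divide and to preserve inequality directions). For the comparison you were asked about: the paper itself contains no proof of Lemma~\ref{L2}---it is stated as a modification of \cite[Lemma~1]{NgaThe09} and \cite[Corollary~1.14.1]{Kru03} (see also \cite[Proposition~2.1]{YaoZhe16})---and your direct verification from the definition of the Fr\'echet subdifferential is essentially the standard argument underlying those cited results, so your write-up supplies a self-contained proof of what the paper only cites.
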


\section{Characterizations of H\"{o}lder error bounds} \label{S3}

%AK5/06/18
In this section, we establish and discuss some necessary and sufficient subdifferential conditions for H\"{o}lder error bounds.
We start with a slightly new look at the very well studied linear error bounds.

%\input{Section3}

%AK8/06/18

\renewcommand {\theenumi} {\rm(\roman{enumi})}
\renewcommand {\labelenumi} {\theenumi}
\renewcommand {\theenumii} {\rm(\alph{enumii})}
\renewcommand {\labelenumii} {\theenumii}

%AK31/03/18
\subsection{Linear error bounds}

The next
{elementary}
lemma collects the main arguments used in the proofs of the subdifferential sufficient error bound conditions, the key tools being the Ekeland variational principle (Lemma~\ref{Ek}) and subdifferential sum rules (Lemma~\ref{SR}).
It
{establishes an error bound estimate for a fixed point $x\notin[f\le0]$ and}
actually combines two separate statements: for \lsc\ functions in Asplund spaces and for convex functions in general Banach spaces.
All sufficient error bound conditions in this section are in a sense consequences of this lemma.

\begin{lemma}\label{L1}
Suppose $X$ is a Banach space, $f:X\rightarrow\mathbb{R}\cup\{+\infty\}$ is lower semicontinuous, $x\in X$ and $f(x)>0$.
Let $\tau>0$ and $\al\in(0,1]$.
If either $X$ is Asplund and,
{given an $M>f(x)$,}
\begin{multline}\label{L1-1}
d(0,\partial f(u))\ge\tau
\quad\mbox{for all}\quad
u\in X
\;\mbox{with}\;
\|u-x\|<\alpha d(x,[f\leq0]),\\
f(u)<M
\;\mbox{and}\;
f(u)<\tau d(u,[f\le0]),
\end{multline}
or $f$ is convex and
\begin{multline}\label{L1-1c}
d(0,\partial f(u))\ge\tau
\quad\mbox{for all}\quad
u\in X
\;\mbox{with}\;
\|u-x\|<\alpha d(x,[f\leq0]),\\
f(u)\le f(x)
\;\mbox{and}\;
f(u)<\tau d(u,[f\le0]),
\end{multline}
then $[f\leq0]\ne\es$ and
\begin{gather}\label{L1-2}
\alpha\tau d(x,[f\leq0])\leq f(x).
\end{gather}
\end{lemma}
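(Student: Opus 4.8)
The plan is to argue by contradiction. Suppose the estimate \eqref{L1-2} fails, i.e.
\[
f(x) < \al\tau\, d(x,[f\le0]),
\]
where I adopt the convention $d(x,\es)=+\infty$, so that this single inequality also subsumes the case $[f\le0]=\es$; conversely, once I establish $\al\tau\,d(x,[f\le0])\le f(x)<+\infty$ the set $[f\le0]$ is automatically nonempty. Write $\rho:=d(x,[f\le0])\in(0,+\infty]$, the positivity following since $f$ is lower semicontinuous and $f(x)>0$, whence $f>0$ on a neighbourhood of $x$. The device is to apply the Ekeland variational principle (Lemma~\ref{Ek}) on the complete space $X$ to the lower semicontinuous, nonnegative function $g:=[f]_+$, which satisfies $g(x)=f(x)$ and $\inf_X g\ge0$, and then to read off a contradiction from the subdifferential hypothesis.

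First I would choose the Ekeland parameters with a deliberate slack. Since $f(x)<\al\tau\rho$, I pick $\tau'\in\big(f(x)/(\al\rho),\,\tau\big)$ and then $\eps\in\big(f(x),\,\al\tau'\rho\big)$, and set $\la:=\eps/\tau'$; this guarantees $\la<\al\rho$, $\eps/\la=\tau'<\tau$, and $g(x)=f(x)<\eps\le\inf_X g+\eps$, so Lemma~\ref{Ek} applies and yields $\hat x$ with $\|\hat x-x\|<\la<\al\rho$, $g(\hat x)\le g(x)=f(x)$, and $g(u)+\tau'\|u-\hat x\|\ge g(\hat x)$ for all $u\in X$. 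I would first verify $f(\hat x)>0$: otherwise $\hat x\in[f\le0]$ forces $\rho\le\|\hat x-x\|<\al\rho\le\rho$, impossible (and in the empty case $\hat x\in\es$ is impossible). Hence $g=f$ on a neighbourhood of $\hat x$, so $g(\hat x)=f(\hat x)\le f(x)$. Testing the Ekeland descent inequality against points of $[f\le0]$ gives $f(\hat x)=g(\hat x)\le\tau'd(\hat x,[f\le0])<\tau\,d(\hat x,[f\le0])$. Thus $\hat x$ satisfies every constraint in \eqref{L1-1c}, and since moreover $f(\hat x)\le f(x)<M$, also those in \eqref{L1-1}.

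The contradiction comes from Ekeland minimality together with a sum rule. Since $\hat x$ globally minimizes $u\mapsto g(u)+\tau'\|u-\hat x\|$, Lemma~\ref{L2.3} gives $0\in\partial\big(g+\tau'\|\cdot-\hat x\|\big)(\hat x)$. In the convex case $g=\max\{f,0\}$ is convex and the norm term is continuous, so the exact sum rule (Lemma~\ref{SR}(ii)) produces a subgradient of $g$ at $\hat x$ of norm at most $\tau'$; because $g=f$ near $\hat x$ one checks $\partial g(\hat x)=\partial f(\hat x)$, whence $d(0,\partial f(\hat x))\le\tau'<\tau$, contradicting \eqref{L1-1c}. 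In the Asplund case I would instead invoke the fuzzy sum rule (Lemma~\ref{SR}(i)) with the Lipschitz summand $\tau'\|\cdot-\hat x\|$: for each small $\eta>0$ it produces a point $u_2$ with $\|u_2-\hat x\|<\eta$, $|g(u_2)-g(\hat x)|<\eta$, and a subgradient of $g$ at $u_2$ of norm at most $\tau'+\eta$, so $d(0,\partial g(u_2))\le\tau'+\eta$.

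The hard part — and the only point needing real care — is to certify that this fuzzy point $u_2$ still lies in the admissible region of \eqref{L1-1}. Because the inequalities defining $\hat x$ are strict and $\tau'<\tau$, the $\eta$-errors can be absorbed: for $\eta$ small enough one has $\|u_2-x\|<\al\rho$, then $0<f(u_2)=g(u_2)<M$ (so again $\partial g(u_2)=\partial f(u_2)$ as $g=f$ near $u_2$), and, using $f(\hat x)\le\tau'd(\hat x,[f\le0])$ together with the $1$-Lipschitz dependence of the distance function, $f(u_2)<\tau\,d(u_2,[f\le0])$. Hence hypothesis \eqref{L1-1} forces $d(0,\partial f(u_2))\ge\tau$, giving $\tau\le\tau'+\eta$; letting $\eta\to0$ yields $\tau\le\tau'<\tau$, the desired contradiction. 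Throughout, the degenerate case $[f\le0]=\es$ (i.e. $\rho=+\infty$) is handled uniformly, since then every term $d(\cdot,[f\le0])=+\infty$ renders the relevant constraints vacuous, while $f(\hat x)<+\infty$ still rules out $\hat x\in[f\le0]$.
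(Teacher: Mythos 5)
Your proof is correct and takes essentially the same route as the paper's: a contradiction argument built on the Ekeland variational principle applied to $f_+$ with a slack constant $\tau'<\tau$, followed by the convex sum rule (convex case) or the fuzzy sum rule with absorption of the small perturbation errors (Asplund case) to contradict \eqref{L1-1c} or \eqref{L1-1}. The only cosmetic differences are that you keep $f_+$ throughout and recover $\partial f(u_2)=\partial f_+(u_2)$ by locality at the end, whereas the paper switches to $f$ immediately after Ekeland, and that you let $\eta\to0$ rather than fixing a single sufficiently small $\varepsilon$ at the outset.
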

\begin{proof}
Suppose that condition \eqref{L1-2} is not satisfied, i.e.
$f(x)<\alpha
\tau d(x,[f\leq0])$ (this is the case, in particular, when $[f\leq0]=\es$).
Choose a $\hat\tau\in(0,\tau)$ such that $f(x)<\alpha
\hat\tau d(x,[f\leq0])$.
Then, by
the Ekeland variational principle
{(Lemma~\ref{Ek})}
applied to the \lsc\ function $f_+$, there exists a point $\hat u\in X$ such that
\begin{gather}\label{L1P-1}
f_+(\hat u)\le f(x),\quad
\|\hat u-x\|<\alpha d(x,[f\leq0]),
\\\label{L1P-2}
f_+(\hat u)\le f_+(u)+
\hat\tau\|u-\hat u\|
\quad\mbox{for all}\quad
u\in X.
\end{gather}
Since $\alpha\in(0,1]$, it follows from \eqref{L1P-1} that $\hat u\notin[f\leq0]$, and consequently, $f_+(\hat u)=f(\hat u)>0$.
{If $[f\leq0]\ne\es$,}
it follows from \eqref{L1P-2} that
\begin{gather}\label{L1P-4}
f(\hat u)\le\hat\tau d(\hat u,[f\le0]).
\end{gather}
{If $[f\leq0]=\es$, the last inequality is satisfied trivially.}
In view of the lower semicontinuity of $f$, we have $f_+(u)=f(u)>0$ for all $u$ near $\hat u$, and it follows from \eqref{L1P-2} and Lemma~\ref{L2.3} that $0\in\partial(f+g)(\hat u)$ where $g(u):=\hat\tau\|u-\hat u\|$, $u\in X$.

\begin{enumerate}
\item
Suppose $X$ is Asplund.
Choose an $\varepsilon>0$ such that
\begin{multline}\label{L1P-3}
f(x)+\varepsilon<M,
\;\;
\|\hat u-x\|+\varepsilon<\alpha d(x,[f\leq0]),
\;\;
\varepsilon<f(\hat u),
\;\;
\varepsilon <\tau-\hat\tau
\\
\mbox{and}\;\;
\varepsilon(1+\tau)<(\tau-\hat\tau) d(\hat u,[f\le0]).
\end{multline}
Applying the fuzzy sum rule (Lemma~\ref{SR}(i)), we find
points $\hat x,\hat x'\in B_{\varepsilon}(\hat u)$ and subgradients $x^*\in\partial f(\hat x)$, $x'{}^*\in\partial g(\hat x')$
such that $|f(\hat x)-f(\hat u)|<\eps$
and $\|x^*+x'{}^*\|<\varepsilon$.
By the definition of $g$, we have $\|x'{}^*\|\le
\hat\tau$.
Using \eqref{L1P-3}, \eqref{L1P-4}, \eqref{L1P-1} and the obvious inequality $d(\hat x,[f\le0])+\|\hat x-\hat u\|-d(\hat u,[f\le0])\ge0$, we obtain the following estimates:
\begin{align*}\notag%\label{L1P-5}
\|\hat x-x\|&\le\|\hat x-\hat u\| +\|\hat u-x\| <\varepsilon+\|\hat u-x\|<\alpha d(x,[f\leq0]),
\\
f(\hat x)&>f(\hat u)-\varepsilon>0,
\\
f(\hat x)&<f(\hat u)+\varepsilon
\le\hat\tau d(\hat u,[f\le0])+\varepsilon
\\
&\le\hat\tau d(\hat u,[f\le0])+\varepsilon +\tau\big(d(\hat x,[f\le0])+\|\hat x-\hat u\|-d(\hat u,[f\le0])\big)
\\
&<(\hat\tau-\tau)d(\hat u,[f\le0])+\varepsilon(1+\tau) +\tau d(\hat x,[f\le0])<\tau d(\hat x,[f\le0]),
\\
f(\hat x)&<f(x)+\varepsilon<M,
\\
d(0,\partial f(\hat x))&\le\|x^*\|<
\hat\tau+\varepsilon<\tau.
\end{align*}
This contradicts \eqref{L1-1}.
\item
Suppose $f$ is convex.
Since $g$ is convex continuous, we can
apply the convex sum rule (Lemma~\ref{SR}(ii)) to find a subgradient $x^*\in\partial f(\hat u)$ such that  $\|x^*\|\le\hat\tau$.
Thus, making use also of \eqref{L1P-4}, we have $f(\hat u)<\tau d(\hat u,[f\le0])$ and $d(0,\partial f(\hat u))\le\|x^*\|\le\hat\tau<\tau$.
This contradicts \eqref{L1-1c} and completes the proof.
\qed\end{enumerate}
\end{proof}

In the setting of linear error bounds, the first part of Lemma~\ref{L1} strengthens \cite[Theorem~2]{NgaThe09}, where a more general setting of H\"older error bounds was studied.
We are going to show in the next subsection that this seemingly more general setting can be treated within
the conventional linear theory.

%AK8/06/18
{Dropping or weakening any or all of the conditions on $u$ in \eqref{L1-1} makes the sufficient condition in Lemma~\ref{L1} stronger (while weakening the result).
This way one can formulate simplified versions of Lemma~\ref{L1}.
For instance, condition $f(u)<\tau d(u,[f\le0])$ in \eqref{L1-1} does not seem practical when checking error bounds as it involves the unknown set $[f\le0]$, and basically says that only the points not satisfying the error bound property with constant $\tau$ should be checked.
This condition is usually
%dropped.
{either dropped or replaced by the easier to check weaker condition $f(u)<\tau\|u-\bx\|$,
where $\bx$ is some point in $[f\le0]$.}

\begin{corollary}%\label{C3.10}
Suppose $X$ is a Banach space, $f:X\rightarrow\mathbb{R}\cup\{+\infty\}$ is lower semicontinuous, $x\in X$ and $f(x)>0$.
Let $\tau>0$ and $\al\in(0,1]$.
If either $X$ is Asplund or $f$ is convex, and
\begin{gather*}%\label{L1-1}
d(0,\partial f(u))\ge\tau
\;\;\mbox{for all}\;\;
u\in X
\;\mbox{with}\;
\|u-x\|<\alpha d(x,[f\leq0])
{\;\mbox{and}\;
f(u)<\tau d(u,[f\le0])},
\end{gather*}
then $[f\leq0]\ne\es$ and condition \eqref{L1-2} holds true.
\end{corollary}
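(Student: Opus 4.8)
The plan is to derive the corollary directly from Lemma~\ref{L1} by observing that its hypothesis is nothing but a strengthening of the corresponding hypothesis in that lemma. The key point is a monotonicity principle for sufficient conditions of the form ``$d(0,\partial f(u))\ge\tau$ for all $u$ satisfying certain constraints'': such a condition becomes \emph{stronger}, not weaker, when one of the constraints on $u$ is dropped, because the set of test points $u$ over which the subdifferential estimate is demanded then grows larger. The corollary's displayed assumption is obtained from \eqref{L1-1} (Asplund case) and from \eqref{L1-1c} (convex case) precisely by deleting the bound $f(u)<M$, respectively $f(u)\le f(x)$.

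First I would treat the Asplund case. It suffices to exhibit a single $M>f(x)$ for which \eqref{L1-1} holds, so I would fix any such $M$, say $M:=f(x)+1$. Every point $u$ meeting the three constraints of \eqref{L1-1}, namely $\|u-x\|<\alpha d(x,[f\le0])$, $f(u)<M$, and $f(u)<\tau d(u,[f\le0])$, in particular meets the two constraints $\|u-x\|<\alpha d(x,[f\le0])$ and $f(u)<\tau d(u,[f\le0])$ appearing in the corollary's hypothesis. Hence that hypothesis already forces $d(0,\partial f(u))\ge\tau$ at every such $u$, which is exactly \eqref{L1-1} for this $M$. Lemma~\ref{L1} then yields $[f\le0]\ne\es$ together with \eqref{L1-2}.

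Next I would handle the convex case in the same fashion, comparing now with \eqref{L1-1c}. Every $u$ with $\|u-x\|<\alpha d(x,[f\le0])$, $f(u)\le f(x)$, and $f(u)<\tau d(u,[f\le0])$ satisfies the two weaker constraints of the corollary, so the corollary's hypothesis again delivers $d(0,\partial f(u))\ge\tau$, verifying \eqref{L1-1c}. Applying the convex alternative of Lemma~\ref{L1} concludes the argument.

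There is no genuine obstacle here: the corollary is an immediate simplification of Lemma~\ref{L1}, and the only thing requiring care is the \emph{direction} of the implication between the two hypotheses. The subtlety worth stating explicitly is that omitting the extra bound on $f(u)$ enlarges the admissible family of test points $u$, so the corollary's assumption is logically stronger than—and therefore implies—the assumption of the lemma, rather than the reverse.
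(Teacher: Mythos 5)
Your proof is correct and takes essentially the same route as the paper: the paper gives no separate proof for this corollary, treating it as immediate from the remark preceding it that dropping a constraint on $u$ in \eqref{L1-1} or \eqref{L1-1c} only strengthens the hypothesis of Lemma~\ref{L1}. Your explicit choice $M:=f(x)+1$ in the Asplund case and the comparison with \eqref{L1-1c} in the convex case just spell out that observation.
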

}

\begin{remark}%\label{R2}
\begin{enumerate}
\if{
\item
Instead of simply dropping condition $f(u)<\tau d(u,[f\le0])$ in \eqref{L1-1}, it can be replaced by the easier to check (but weaker) condition $f(u)<\tau\|u-\bx\|$.
This remark applies to the other statements in this paper deduced from Lemma~\ref{L1}: they can be simplified (though weakened!)
in a similar way.}\fi
\item
The subdifferential characterizations in Lemma~\ref{L1} are in fact consequen\-ces of the corresponding primal space characterizations in terms of slopes, some traces of which can be found in its proof; cf. \cite{Kru15,WuYe02}.
We do not consider primal space characterizations in this paper.
{
\item
Elementary (primal or dual) error bound statements for a fixed point $x\notin[f\le0]$, coming from the \EVP\ and lying at the core of all sufficient error bound characterizations have been formulated by several authors; cf. \cite{Iof79,Iof00,NgaThe09,Pen13}.}
\item
It is well understood by now that Fr\'echet subdifferentials can be replaced in this type of results by other subdifferentials possessing reasonable sum rules in appropriate (\emph{trustworthy} \cite{Iof98}) spaces.
%AK5/06/18
For instance, it is easy to establish analogues of Lemma~\ref{L1} and the other statements in this section for \lsc\ functions in general Banach spaces in terms of Clarke subdifferentials.
%AK9/06/18
{We do not do it in the current paper to keep the presentation simple and avoid using several types of subdifferentials in one statement.}
%If the function is convex, one can use conventional subdifferentials in the sense of convex analysis.
%For simplicity, in this paper we consider only Fr\'echet subdifferentials in Asplund spaces.
\item
The value of the parameter $\al$ in Lemma~\ref{L1} determines a tradeoff between the strength of the error bound estimate \eqref{L1-2} and the size of the neighbourhood of $x$ involved in the sufficient conditions \eqref{L1-1} and \eqref{L1-1c}: increasing the value of $\al$ strengthens condition \eqref{L1-2} at the expense of increasing the neighbourhood of $x$, all points from which have to be checked in conditions \eqref{L1-1} and \eqref{L1-1c}.
%AK5/06/18
%red {The conventional error bound statements correspond to taking $\al=1$.}
\end{enumerate}
\end{remark}

The next theorem is a slight generalization of the conventional linear error bound statement in the subdifferential form (which corresponds to taking $\al=1$).
It is an immediate consequence of Lemma~\ref{L1}.

\begin{theorem}\label{T3.1}
Suppose $X$ is a Banach space, $f:X\rightarrow\mathbb{R}\cup\{+\infty\}$ is lower semicontinuous and $\bar x\in[f\leq0]$.
Let $\tau>0$ and $\delta\in(0,\infty]$.
If either $X$ is Asplund or $f$ is convex, and
\begin{gather}\label{T3.1-1}
d(0,\partial f(x))\ge\tau
\quad\mbox{for all}\quad
x\in B_\delta(\bar x)\cap[f>0]
\;\;\mbox{with}\;\;
f(x)<\tau d(x,[f\leq0]),
\end{gather}
then
%$[f\leq0]\ne\es$ and
\begin{gather}\label{T3.1-2}
\alpha\tau d(x,[f\leq0])\leq f_+(x)
\quad\mbox{for all}\quad
\alpha\in(0,1]
\quad\mbox{and}\quad
x\in B_{\frac{\delta}{1+\alpha}}(\bar x).
\end{gather}
\end{theorem}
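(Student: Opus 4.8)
The plan is to apply Lemma~\ref{L1} at each individual point and then assemble the pointwise estimates. First I would fix $\al\in(0,1]$ and an arbitrary $x\in B_{\frac{\de}{1+\al}}(\bx)$. If $f(x)\le0$ then $x\in[f\le0]$, so $d(x,[f\le0])=0=f_+(x)$ and \eqref{T3.1-2} holds trivially; hence I would reduce to the case $f(x)>0$, where $f_+(x)=f(x)$ and it suffices to deduce $\al\tau d(x,[f\le0])\le f(x)$. For this it is enough to check that the point $x$ satisfies the hypothesis \eqref{L1-1} of Lemma~\ref{L1} (in the Asplund case, for any fixed $M>f(x)$) or \eqref{L1-1c} (in the convex case), after which Lemma~\ref{L1} delivers \eqref{L1-2} directly.

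The heart of the argument is a geometric observation about the points $u$ that appear in \eqref{L1-1} and \eqref{L1-1c}, namely those with $\|u-x\|<\al d(x,[f\le0])$. I would first note that, since $\al\le1$,
\[
\|u-x\|<\al\, d(x,[f\le0])\le d(x,[f\le0]),
\]
so no such $u$ can belong to $[f\le0]$ (that would contradict the definition of the distance), forcing $f(u)>0$. Next, because $\bx\in[f\le0]$, one has $d(x,[f\le0])\le\|x-\bx\|<\frac{\de}{1+\al}$, and the triangle inequality gives
\[
\|u-\bx\|\le\|u-x\|+\|x-\bx\|<\al\,\frac{\de}{1+\al}+\frac{\de}{1+\al}=\de.
\]
Thus every tested $u$ lies in $B_\de(\bx)\cap[f>0]$, precisely the region where \eqref{T3.1-1} is assumed to hold.

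With this in place the verification is routine: any $u$ satisfying the remaining requirements of \eqref{L1-1} or \eqref{L1-1c} in particular obeys $f(u)<\tau d(u,[f\le0])$, so \eqref{T3.1-1} yields $d(0,\sd f(u))\ge\tau$, which is exactly what \eqref{L1-1} and \eqref{L1-1c} demand. The extra conditions in those displays (the bound $f(u)<M$ in \eqref{L1-1} and $f(u)\le f(x)$ in \eqref{L1-1c}) only further restrict the family of points to be checked, so they pose no problem. Applying Lemma~\ref{L1} then finishes the pointwise estimate, and ranging over all admissible $x$ and $\al$ yields \eqref{T3.1-2}.

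Since the whole argument is an application of Lemma~\ref{L1}, I do not expect a genuine obstacle. The one point needing care is the accounting of radii: the factor $1+\al$ in the domain $B_{\frac{\de}{1+\al}}(\bx)$ of the conclusion is exactly what the triangle-inequality estimate above consumes in order to keep all tested points $u$ inside $B_\de(\bx)$. This is where the tradeoff between the strength of the estimate (through $\al$) and the size of the neighbourhood controlled by \eqref{T3.1-1} is encoded, as the remark preceding the theorem already signals.
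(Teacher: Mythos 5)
Your proof is correct and follows essentially the same route as the paper's: both arguments rest on Lemma~\ref{L1} together with the two key observations that $\alpha\le1$ forces every tested point $u$ to satisfy $f(u)>0$ and that the triangle inequality keeps $u$ inside $B_\delta(\bar x)$. The only difference is presentational --- the paper assumes \eqref{T3.1-2} fails and invokes the contrapositive of Lemma~\ref{L1} to produce a point $u$ contradicting \eqref{T3.1-1}, whereas you verify the hypotheses \eqref{L1-1}/\eqref{L1-1c} directly and apply the lemma in its stated form.
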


\begin{proof}
Suppose that condition \eqref{T3.1-2} is not satisfied, i.e.
$f_+(x)<\alpha\tau d(x,[f\leq0])$
for some $\alpha\in(0,1]$ and some
$x\in B_{\frac{\delta}{1+\alpha}}(\bar x)$.
Then $d(x,[f\leq0])>0$, and consequently, $f_+(x)=f(x)>0$.
By Lemma~\ref{L1}, there exists a $u\in X$
with
$\|u-x\|<\alpha d(x,[f\leq0])$
and
$f(u)<\tau d(u,[f\leq0])$ such that
$d(0,\partial f(u))<\tau$.
This contradicts \eqref{T3.1-1} because
$\|u-\bx\|\le\|u-x\|+\|x-\bx\|<(\al+1)\|x-\bx\|<\de$
and
$f(u)>0$.
\qed\end{proof}

%8/06/18
{
The next statement is a simplified version of Theorem~\ref{T3.1}.
with the the last inequality in \eqref{T3.1-1}
%dropped.
{replaced by the easier to check weaker condition $f(u)<\tau\|u-\bx\|$}.

\begin{corollary}\label{C3.5}
Suppose $X$ is a Banach space, $f:X\rightarrow\mathbb{R}\cup\{+\infty\}$ is lower semicontinuous and $\bar x\in[f\leq0]$.
Let $\tau>0$ and $\delta\in(0,\infty]$.
If either $X$ is Asplund or $f$ is convex, and
\begin{gather*}%\label{T3.1-1}
d(0,\partial f(x))\ge\tau
\quad\mbox{for all}\quad
x\in B_\delta(\bar x)\cap[f>0]
{\;\mbox{with}\;
f(x)<\tau\|x-\bx\|},
\end{gather*}
then
%$[f\leq0]\ne\es$ and
condition \eqref{T3.1-2} holds true.
\end{corollary}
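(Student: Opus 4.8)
The plan is to obtain Corollary~\ref{C3.5} as an immediate consequence of Theorem~\ref{T3.1}, by showing that the corollary's hypothesis is in fact \emph{stronger} than the hypothesis \eqref{T3.1-1} of the theorem. The only genuine ingredient is an elementary distance estimate, and everything reduces to tracking the direction of one implication.

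First I would record the geometric fact that, since $\bx\in[f\le0]$, the distance to the whole sublevel set is dominated by the distance to the single point $\bx$: for every $x\in X$,
\begin{equation*}
d(x,[f\le0])\le d(x,\bx)=\|x-\bx\|,
\end{equation*}
and therefore $\tau d(x,[f\le0])\le\tau\|x-\bx\|$. This is the heart of the argument.

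Next I would use this inequality to compare the two families of test points. The estimate above shows that the condition $f(x)<\tau d(x,[f\le0])$ appearing in \eqref{T3.1-1} implies the condition $f(x)<\tau\|x-\bx\|$ appearing in the corollary; equivalently, the set of points $x\in B_\delta(\bx)\cap[f>0]$ satisfying $f(x)<\tau d(x,[f\le0])$ is contained in the set of those satisfying $f(x)<\tau\|x-\bx\|$. Hence the corollary's assumption, which requires $d(0,\partial f(x))\ge\tau$ over the \emph{larger} set, automatically delivers $d(0,\partial f(x))\ge\tau$ over the smaller one, and this is precisely the hypothesis \eqref{T3.1-1}. Invoking Theorem~\ref{T3.1} then yields the conclusion \eqref{T3.1-2}.

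There is no real obstacle here; the single point deserving care is the bookkeeping of the implication. Replacing $f(x)<\tau d(x,[f\le0])$ by the \emph{weaker}, more easily satisfied condition $f(x)<\tau\|x-\bx\|$ enlarges the set of points at which $d(0,\partial f(x))\ge\tau$ must be verified, and so it \emph{strengthens} the subdifferential requirement rather than weakening it. Thus the corollary trades a measure of generality for the practical benefit of a condition that no longer references the unknown set $[f\le0]$, relying only on the distance to the fixed reference point $\bx$.
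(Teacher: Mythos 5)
Your proof is correct and follows essentially the same route as the paper, which states Corollary~\ref{C3.5} as an immediate consequence of Theorem~\ref{T3.1}: since $\bx\in[f\le0]$ gives $d(x,[f\le0])\le\|x-\bx\|$, the condition $f(x)<\tau\|x-\bx\|$ is weaker than $f(x)<\tau d(x,[f\leq0])$, so the corollary's hypothesis is stronger than \eqref{T3.1-1} and the theorem applies. Your explicit bookkeeping of the implication direction is exactly the observation the paper makes (in the remark preceding the corollary) when it speaks of replacing the test condition by an ``easier to check weaker condition.''
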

}

\begin{remark}\label{R3.4}
\begin{enumerate}
\item
Theorem~\ref{T3.1}
%8/06/18
{and Corollary~\ref{C3.5}}
allow for $\de=\infty$, thus, covering also global error bounds.
\item
The value of the parameter $\al$ in Theorem~\ref{T3.1} %8/06/18
{and Corollary~\ref{C3.5}}
determines a tradeoff between the sharpness of the error bound estimate in \eqref{T3.1-2} and the size of the neighbourhood of $\bx$, where this estimate holds.
If the size of the neighbourhood is not important, one can take $\al=1$, which insures the sharpest error bound estimate.
Note that, unlike Lemma~\ref{L1}, in Theorem~\ref{T3.1} and the subsequent statements in this section the parameter $\al$ is only present in the concluding part. %\eqref{T3.1-2}.
\if{
\item
The inequality $f(x)<\tau d(x,[f\leq0])$ in \eqref{T3.1-1} and similar conditions in the other error bound statements in this section can be replaced by the weaker but easier to check inequality $f(x)<\tau\|x-\bx\|$.}\fi
\end{enumerate}
\end{remark}

Thanks to
%8/06/18
{Corollary~\ref{C3.5}},
%and Remark~\ref{R3.4}(iii),
the limit
\begin{gather*}%\label{P3.9-3}
{\underline{\Er}f(\bx):=}
\liminf_{x\to\bar x,\;f(x)
{\downarrow}
0} d(0,\partial f(x))
\end{gather*}
provides a lower estimate for the local error bound modulus $\Er f(\bx)$ of $f$ at $\bx$.
%AK5/06/18
{Such estimates are often used in the literature.}

%AK31/03/18
\subsection{H\"older error bounds}

The estimate \eqref{T3.1-2} constitutes the \emph{linear} error bound for the function $f$ at $\bar x$ with
%AK3/04/18
%\emph{modulus}
constant
$\al\tau$.
In many important situations such linear estimates do not hold, and this is where more subtle nonlinear (in particular, H\"older type) models come into play.
Surprisingly, such seemingly more general models can be treated within the conventional linear theory.
The next theorem providing a characterization for the H\"older error bounds is a consequence of Theorem~\ref{T3.1}.

Given a function $f:X\rightarrow\mathbb{R}\cup\{+\infty\}$, a point $x\in X$ with $f(x)\ge0$ and a number $q>0$, $f^q(x)$ stands for $[f(x)]^q$.
Thus,
$f^q$ is a function on $[f\ge0]$.
Note that the next theorem allows for $q>1$.

\begin{theorem}\label{T3.2}
Suppose $X$ is a Banach space, $f:X\rightarrow\mathbb{R}\cup\{+\infty\}$ is lower semicontinuous and $\bar x\in[f\leq0]$.
Let $\tau>0$, $\delta\in(0,\infty]$ and $q>0$.
If either $X$ is Asplund or $f$ is convex, and
\begin{gather}\label{T3.2-1}
q f^{q-1}(x)d(0,\partial f(x))\ge\tau
\;\;\mbox{for all}\;\;
x\in B_\delta(\bar x)\cap[f>0]
\;\mbox{with}\;
f^q(x)<\tau d(x,[f\leq0]),
\end{gather}
then
%$[f\leq0]\ne\es$ and
\begin{gather}\label{T3.2-2}
\alpha\tau d(x,[f\leq0])\leq f_+^q(x)
\quad\mbox{for all}\quad
\alpha\in(0,1]
\quad\mbox{and}\quad
x\in B_{\frac{\delta}{1+\alpha}}(\bar x).
\end{gather}
\end{theorem}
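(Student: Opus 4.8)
The plan is to reduce the $q$-order (Hölder) statement to the linear one already established in Theorem~\ref{T3.1} by a monotone reparametrization of the \emph{values} of $f$. I would define a strictly increasing continuous function $\psi:\R\to\R$ by $\psi(t)=t^q$ for $t\ge0$ and $\psi(t)=t$ for $t<0$, and set $g:=\psi\circ f$. The point of this choice is threefold. First, since $\psi$ is increasing with $\psi(t)\le0\iff t\le0$, one has $[g\le0]=[f\le0]$, so in particular $\bar x\in[g\le0]$ and the distance terms in \eqref{T3.2-2} and in \eqref{T3.1-2} written for $g$ coincide. Second, $g$ is lower semicontinuous, being the composition of a continuous nondecreasing function with an lsc function, and $g_+=f_+^q$, so the right-hand sides also match. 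Third, on $[f>0]$ the function $\psi$ is differentiable with $\psi'(f(x))=qf^{q-1}(x)>0$ and nondecreasing on $[f(x),+\infty)$, so the chain rule Lemma~\ref{L2} applies at every such point and yields $\partial g(x)=qf^{q-1}(x)\partial f(x)$, hence $d(0,\partial g(x))=qf^{q-1}(x)\,d(0,\partial f(x))$.

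With these identities the Hölder hypothesis \eqref{T3.2-1} is exactly the linear hypothesis \eqref{T3.1-1} written for $g$ (note $[g>0]=[f>0]$ and $g=f^q$ there), and the desired conclusion \eqref{T3.2-2} is exactly \eqref{T3.1-2} for $g$. Thus, when $X$ is Asplund, I would simply invoke the Asplund branch of Theorem~\ref{T3.1} for $g$ and then translate everything back via $g_+=f_+^q$ and $[g\le0]=[f\le0]$.

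The hard part is the convex case, and it is here that the reduction cannot be used as a black box: even if $f$ is convex, $g=\psi\circ f$ is \emph{not} convex for $q\ne1$ (for $q>1$ the profile $\psi$ has a downward kink at $0$, and for $q<1$ it is concave on $(0,\infty)$), so Theorem~\ref{T3.1} does not apply to $g$ through its convex branch. I would therefore re-run the Ekeland argument directly on $g_+$, as in the proof of Lemma~\ref{L1}. Assuming \eqref{T3.2-2} fails at some $x$ and some $\alpha\in(0,1]$, one gets $f(x)>0$ and $g(x)<\alpha\tau\,d(x,[f\le0])$; the Ekeland variational principle (Lemma~\ref{Ek}) applied to $g_+$ with a constant $\hat\tau\in(0,\tau)$ produces a point $\hat u$, close to $x$, lying in $[f>0]$, satisfying $f^q(\hat u)<\tau\,d(\hat u,[f\le0])$, and locally minimizing $g_+(\cdot)+\hat\tau\|\cdot-\hat u\|$.

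The key new step, replacing the inapplicable convex sum rule for $g$, is to convert this into a minimization statement for $f$ itself. Since $f$ is lsc and $f(\hat u)>0$, one has $f>0$ near $\hat u$, so locally $g=f^q$ and the local minimality gives $f(u)^q\ge f(\hat u)^q-\hat\tau\|u-\hat u\|$. Taking $q$-th roots and expanding $(c^q-s)^{1/q}=c-s/(qc^{q-1})+o(s)$ with $c=f(\hat u)$ shows that $\hat u$ locally minimizes $f(\cdot)+\hat\tau'\|\cdot-\hat u\|$ for every $\hat\tau'>\hat\tau/(qf^{q-1}(\hat u))$. Now $f$ is convex and the norm term is convex and continuous, so Lemma~\ref{L2.3} and the convex sum rule (Lemma~\ref{SR}(ii)) give $d(0,\partial f(\hat u))\le\hat\tau'$; letting $\hat\tau'\downarrow\hat\tau/(qf^{q-1}(\hat u))$ yields $qf^{q-1}(\hat u)\,d(0,\partial f(\hat u))\le\hat\tau<\tau$. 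Together with the location and sublevel-set properties of $\hat u$, this contradicts \eqref{T3.2-1} and finishes the convex case. I expect this monotone-root passage from $g$ back to $f$, carried out so that only the convexity of $f$ (and not of $g$) is invoked, to be the main technical obstacle; everything else is bookkeeping around the already-proved Theorem~\ref{T3.1}.
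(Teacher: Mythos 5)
Your proof is correct, and in the Asplund branch it is exactly the paper's argument: the paper's entire proof is the one\-/line reduction you describe, namely apply Theorem~\ref{T3.1} to the lower semicontinuous function $f_+^q$, observing that $[f_+^q\le0]=[f\le0]$ and that, on $[f>0]$, $f_+^q=f^q$ and $\partial f^q(x)=qf^{q-1}(x)\partial f(x)$ by Lemma~\ref{L2}. Where you genuinely diverge is the convex branch, and your concern there is well\-/founded: the paper invokes Theorem~\ref{T3.1} for $f_+^q$ also when $f$ is merely convex and $X$ is a general Banach space, yet the convex branch of Theorem~\ref{T3.1} (via Lemma~\ref{L1}) requires the \emph{composed} function to be convex, and $f_+^q$ need not be convex for $q\in(0,1)$ even when $f$ is (e.g. $f(x)=x$, $q=\tfrac12$); so the paper's proof, as written, has a gap in the convex non\-/Asplund case with $q<1$, while for $q\ge1$ the profile $t\mapsto t_+^q$ is convex and nondecreasing, so $f_+^q$ is convex and the black\-/box reduction is legitimate (note in passing that your particular $\psi$, being linear on $\R_-$, loses convexity even for $q>1$, a minor self\-/inflicted complication that your workaround absorbs anyway). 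Your repair --- rerunning Ekeland on $f_+^q$ and converting the perturbed minimality $f^q(u)\ge f^q(\hat u)-\hat\tau\|u-\hat u\|$ into local minimality of $f+\hat\tau'\|\cdot-\hat u\|$, so that Lemma~\ref{L2.3} and the convex sum rule are applied to $f$ itself rather than to $f^q$ --- is sound for every $q>0$ and yields a point $\hat u\in B_\delta(\bar x)\cap[f>0]$ with $f^q(\hat u)<\tau d(\hat u,[f\le0])$ and $qf^{q-1}(\hat u)\,d(0,\partial f(\hat u))\le\hat\tau<\tau$, contradicting \eqref{T3.2-1} as required; for $q\le1$ you can even dispense with the $o(s)$ expansion, since concavity of $t\mapsto t^q$ on $\R_+$ gives $f^q(\hat u)-f^q(u)\ge qf^{q-1}(\hat u)\bigl(f(\hat u)-f(u)\bigr)$ directly, so the conversion holds with $\hat\tau'=\hat\tau/(qf^{q-1}(\hat u))$ exactly. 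In short: same route as the paper where that route works, and a genuinely different Ekeland\-/plus\-/conversion argument precisely where the paper's route breaks down; what your longer argument buys is a proof of the theorem in its full stated generality.
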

\begin{proof}
Apply Theorem~\ref{T3.1} with the lower semicontinuous function $x\mapsto f_+^q(x)$ in place of $f$.
Observe that $[f_+^q\leq 0]=[f_+=0]=[f\leq 0]$ and, for any $x\in[f>0]$, we have $f_+^q(x)=f^q(x)$ and $\partial f^q(x)=q f^{q-1}(x)\partial f(x)$ (by Lemma~\ref{L2}).
\qed\end{proof}

\if{
%AK25/02/18
{Similarly, applying Proposition~\ref{P5} to the function $x\mapsto f_+^q(x)$ leads to the following statement.
\begin{proposition}\label{P9}
Suppose $X$ is a normed vector space, $f:X\rightarrow\mathbb{R}\cup\{+\infty\}$ and $\bar x\in[f\leq0]$.
Let numbers $\tau>0$, $\delta>0$ and $q\in(0,1]$ be given.
If $f_+^q$ is convex and
\begin{gather*}%\label{L2-1}
\tau d(x,[f\leq0])\leq f_+^q(x)
\quad\mbox{for all}\quad
x\in B_\delta(\bar x),
\end{gather*}
then
\begin{gather*}%\label{T3.1-2}
q f^{q-1}(x)d(0,\partial f(x))\ge\tau
\quad\mbox{for all}\quad
x\in B_\delta(\bar x)\cap[f>0].
\end{gather*}
\end{proposition}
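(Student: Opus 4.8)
The plan is to obtain Proposition~\ref{P9} from Proposition~\ref{P5} by the same device used to pass from Theorem~\ref{T3.1} to Theorem~\ref{T3.2}: apply the (convex) necessary condition not to $f$ itself but to the convex function $g:=f_+^q$, i.e. $g(x)=[f(x)]_+^q$ for $x\in X$. The reduction rests on two elementary observations. Since $q>0$, the $q$-th power is strictly increasing on $[0,+\infty)$ and vanishes only at $0$, so $[g\le0]=[f_+=0]=[f\le0]$; hence $\bx\in[g\le0]$, $d(x,[g\le0])=d(x,[f\le0])$ for every $x$, and $[g>0]=[f>0]$. Consequently the standing assumption $\tau d(x,[f\le0])\le f_+^q(x)$ on $B_\delta(\bx)$ is literally the linear error bound inequality $\tau d(x,[g\le0])\le g(x)$ for the convex function $g$ on $B_\delta(\bx)$.

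With this dictionary, Proposition~\ref{P5} applies verbatim to $g$ and delivers $d(0,\partial g(x))\ge\tau$ for every $x\in B_\delta(\bx)\cap[g>0]=B_\delta(\bx)\cap[f>0]$. The remaining task is to re-express $\partial g(x)$ through $\partial f(x)$ at such points, and this is exactly the chain rule, Lemma~\ref{L2}, applied with the outer function $\psi(t)=[t]_+^q$, which coincides with $t^q$ in a neighbourhood of any positive value. Indeed, for $x\in[f>0]$ we have $f(x)>0$, so $\psi$ is differentiable at $f(x)$ with $\psi'(f(x))=q f^{q-1}(x)>0$ and is nondecreasing on $[f(x),+\infty)$; Lemma~\ref{L2} then yields $\partial g(x)=q f^{q-1}(x)\,\partial f(x)$. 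Because $q f^{q-1}(x)$ is a strictly positive scalar, distances to the origin scale by it, $d(0,\partial g(x))=q f^{q-1}(x)\,d(0,\partial f(x))$ (with the usual convention $d(0,\es)=+\infty$ handling the case $\partial f(x)=\es$), and the bound $d(0,\partial g(x))\ge\tau$ becomes precisely the asserted inequality $q f^{q-1}(x)\,d(0,\partial f(x))\ge\tau$.

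The one point that deserves care, and which I expect to be the main obstacle, is the legitimacy of the chain rule Lemma~\ref{L2}, whose statement presupposes lower semicontinuity of $f$ near the point in question. This regularity is not assumed outright in Proposition~\ref{P9}, but it is furnished by the convexity hypothesis on $g=f_+^q$ together with $q\in(0,1]$: since $1/q\ge1$, the map $s\mapsto s^{1/q}$ is convex and nondecreasing on $[0,+\infty)$, so $f_+=(f_+^q)^{1/q}$ is convex, hence continuous on the interior of its domain; at any $x$ with $f(x)>0$ this forces $f=f_+>0$ throughout a neighbourhood of $x$, so $f$ is continuous there and Lemma~\ref{L2} is applicable. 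I would therefore spell out this local regularity explicitly before invoking the chain rule. Beyond that verification the argument is purely formal: no new Ekeland-type reasoning is needed, since all the variational content already resides in Proposition~\ref{P5}, and Proposition~\ref{P9} follows from it through the substitution $g=f_+^q$ and the scalar chain rule, exactly paralleling the proof of Theorem~\ref{T3.2}.
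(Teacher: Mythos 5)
Your proof follows exactly the route the paper intends: apply the convex ``reverse'' error-bound statement (Proposition~P5, whose pointwise form survives in the paper as Lemma~\ref{L3}) to $g=f_+^q$, then pass from $\partial g(x)$ back to $\partial f(x)$ by the chain rule. Your dictionary $[g\le0]=[f\le0]$, $[g>0]=[f>0]$ and the convexity of $f_+=(f_+^q)^{1/q}$ are all correct. The genuine gap is at the very step you flag as the main obstacle: your justification of the lower semicontinuity required by Lemma~\ref{L2} rests on the claim that a convex function is continuous on the interior of its domain, and this is \emph{false} in the setting of the proposition, where $X$ is an arbitrary normed vector space. Take $q=1$ and $f=\ell$, a discontinuous linear functional: then $f_+^q=\ell_+$ is convex and $[\ell\le0]$ is dense in $X$, so $d(x,[f\le0])=0$ for every $x$ and the error-bound hypothesis holds trivially with any $\tau$; yet $\ell$ is nowhere lower semicontinuous, and $\ell_+$ fails lower semicontinuity at every point of $[f>0]$ --- precisely the points at which you invoke Lemma~\ref{L2}. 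So the hypotheses of Lemma~\ref{L2} cannot be secured from the assumptions of the proposition (even in $\R^n$ your argument would additionally need $x\in\Int\dom f$), and as written the equality $\partial(f_+^q)(x)=qf^{q-1}(x)\partial f(x)$ is unjustified.

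The repair is to notice that you never need that equality, only the inclusion $qf^{q-1}(x)\,\partial f(x)\subset\partial(f_+^q)(x)$, and this inclusion holds for \emph{any} $f$ with $0<f(x)<+\infty$, with no semicontinuity whatsoever. Indeed, let $x^*\in\partial f(x)$ and $\eps>0$; then $f(y)\ge f(x)+\ang{x^*,y-x}-\eps\|y-x\|$ for all $y$ near $x$. Applying the nondecreasing function $\psi(t):=[t]_+^q$ to both sides and expanding $\psi$ to first order at $f(x)>0$, where $\psi'(f(x))=qf^{q-1}(x)>0$, gives
\[
f_+^q(y)\ge f_+^q(x)+qf^{q-1}(x)\ang{x^*,y-x}-\big(qf^{q-1}(x)\eps+o(1)\big)\|y-x\|
\quad\text{as } y\to x,
\]
so that $qf^{q-1}(x)x^*\in\partial(f_+^q)(x)$. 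Combining this with $d(0,\partial(f_+^q)(x))\ge\tau$, which Lemma~\ref{L3} applied to the convex function $f_+^q$ (its proof uses only the convex subgradient inequality, hence needs no semicontinuity) yields at every $x\in B_\delta(\bx)\cap[f>0]$, one gets $qf^{q-1}(x)\|x^*\|\ge\tau$ for every $x^*\in\partial f(x)$; the assertion is trivial when $\partial f(x)=\es$. With this one substitution your argument becomes complete and coincides with the paper's intended proof.
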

}}\fi

Theorem~\ref{T3.2} strengthens \cite[Corollary~2, parts (i) and (ii)]{NgaThe09}.
When $q=1$, Theorem~\ref{T3.2} reduces to Theorem~\ref{T3.1}.

%8/06/18

The next statement is a simplified version of Theorem~\ref{T3.2},
with the the last inequality in \eqref{T3.2-1}
%dropped.
{replaced by the easier to check weaker condition $f^q(u)<\tau\|u-\bx\|$}.

\begin{corollary}\label{C3.8}
Suppose $X$ is a Banach space, $f:X\rightarrow\mathbb{R}\cup\{+\infty\}$ is lower semicontinuous and $\bar x\in[f\leq0]$.
Let $\tau>0$, $\delta\in(0,\infty]$ and $q>0$.
If either $X$ is Asplund or $f$ is convex, and
\begin{gather}\label{P3.9-1}
q f^{q-1}(x)d(0,\partial f(x))\ge\tau
\;\;\mbox{for all}\;\;
x\in B_\delta(\bar x)\cap[f>0]
{\;\mbox{with}\;
f^q(x)<\tau\|x-\bx\|},
\end{gather}
then
%$[f\leq0]\ne\es$ and
condition \eqref{T3.2-2} holds true.
\end{corollary}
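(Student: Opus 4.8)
The plan is to deduce Corollary~\ref{C3.8} directly from Theorem~\ref{T3.2}, by observing that hypothesis \eqref{P3.9-1} of the corollary, although phrased with the simpler side condition $f^q(x)<\tau\|x-\bx\|$, is in fact \emph{stronger} than hypothesis \eqref{T3.2-1} of the theorem. Once this implication is in hand, the conclusion \eqref{T3.2-2} is immediate from Theorem~\ref{T3.2}, with no further work.

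The key (and essentially only) ingredient is the elementary inequality
\begin{equation*}
d(x,[f\le0])\le\|x-\bx\|,
\end{equation*}
which holds for every $x\in X$ precisely because $\bx\in[f\le0]$. First I would fix an arbitrary $x\in B_\delta(\bx)\cap[f>0]$ satisfying the side condition in \eqref{T3.2-1}, namely $f^q(x)<\tau d(x,[f\le0])$. Combining this with the displayed inequality yields $f^q(x)<\tau\|x-\bx\|$, so that $x$ also satisfies the side condition in \eqref{P3.9-1}. Hence the assumed lower bound $qf^{q-1}(x)d(0,\partial f(x))\ge\tau$ applies to this $x$ as well. Since $x$ was an arbitrary point meeting the requirements of \eqref{T3.2-1}, this shows that \eqref{P3.9-1} implies \eqref{T3.2-1}, and a single application of Theorem~\ref{T3.2} finishes the argument.

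The one point to handle with care is the \emph{direction} of the reduction: replacing $d(x,[f\le0])$ by the larger quantity $\|x-\bx\|$ \emph{enlarges} the set of points over which the subdifferential lower bound is imposed, so the corollary demands the estimate on a superset of the points required by the theorem. It is exactly this monotonicity — a stronger hypothesis giving the same conclusion, i.e.\ a genuinely weaker but easier-to-verify statement, in line with the remark preceding the corollary — that legitimizes the reduction. An alternative route would mirror the proof of Theorem~\ref{T3.2} and apply Corollary~\ref{C3.5} to the lower semicontinuous function $x\mapsto f_+^q$, using $[f_+^q\le0]=[f\le0]$ and $\partial f^q(x)=qf^{q-1}(x)\partial f(x)$ (Lemma~\ref{L2}). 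I would nonetheless prefer the direct reduction above, since it inherits the case distinction (Asplund versus convex) and the admissible range of $q$ from Theorem~\ref{T3.2} automatically, without having to re-examine whether $f_+^q$ is convex.
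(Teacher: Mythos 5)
Your proof is correct and is essentially the paper's own (implicit) justification: the paper states Corollary~\ref{C3.8} without proof as a ``simplified version'' of Theorem~\ref{T3.2}, relying precisely on your observation that $d(x,[f\le0])\le\|x-\bx\|$ (because $\bx\in[f\le0]$) makes the side condition weaker, hence the set of points to be checked larger, hence hypothesis \eqref{P3.9-1} stronger than \eqref{T3.2-1}. Your explicit verification of this monotonicity, with the direction of the reduction handled correctly, is exactly the argument the paper leaves to the reader.
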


In view of
%8/06/18
%Theorem~\ref{T3.2}
{Corollary~\ref{C3.8}} and definition \eqref{Er}, the limit
\begin{gather}\label{Er1}
\underline{\Erq}f(\bx):=q\liminf_{x\to\bar x,\;f(x)
{\downarrow}
0}\frac{d(0,\partial f(x))}{f^{1-q}(x)}
\end{gather}
provides a lower estimate for the modulus $\Erq f(\bx)$ of $q$-order error bounds of $f$ at $\bx$.
\if{
\begin{remark}
It is not difficult to show that condition $f(x)>0$ under the $\liminf$ in \eqref{Er1} and other similar limits in this paper can be replaced by $f(x)\downarrow0$.
\end{remark}
}\fi

\begin{example}\label{E3.6}
Let $f:\R\to\R$ be given by $f(x)=x^2$ if $x\ge0$ and $f(x)=0$ if $x<0$.
Then
{$[f\le0]=\R_-$ and},
for any $x>0$, we have $d(x,[f\le0])=x$, $d(0,\partial f(x))=f'(x)=2x$, and, with $q=\frac{1}{2}$, $q f^{q-1}(x)d(0,\partial f(x))=\frac{1}{2}\cdot\frac{1}{x}\cdot2x=1$.
Hence, condition \eqref{P3.9-1} is satisfied with $q=\frac{1}{2}$ and any $\tau\in(0,1]$ and $\delta\in(0,\infty]$.
With $q=\frac{1}{2}$, the inequality in \eqref{T3.2-2} becomes $\al\tau x_+\le x_+$, where $x_+:=\max\{x,0\}$.
It is indeed satisfied for all $\tau\in(0,1]$, $\al\in(0,1]$ and $x\in\R$.
\qed\end{example}

\begin{example}
Let $f:\R\to\R$ be given by $f(x)=\sqrt{x}$ if $x\ge0$ and $f(x)=0$ if $x<0$.
Then
{$[f\le0]=\R_-$ and},
for any $x>0$, we have $d(x,[f\le0])=x$, $d(0,\partial f(x))=f'(x)=\frac{1}{2\sqrt{x}}$, and, with $q=2$, $q f^{q-1}(x)d(0,\partial f(x))=2\cdot\sqrt{x}\cdot\frac{1}{2\sqrt{x}}=1$.
Hence, condition \eqref{P3.9-1} is satisfied with $q=2$ and any $\tau\in(0,1]$ and $\delta\in(0,\infty]$.
With $q=2$, the inequality in \eqref{T3.2-2} becomes $\al\tau x_+\le x_+$.
It is indeed satisfied for all $\tau\in(0,1]$, $\al\in(0,1]$ and $x\in\R$.
\qed\end{example}

%AK31/03/18
%The arguments in the above proof of Theorem~\ref{T3.2} can be easily incorporated in the conventional proof of the linear subdifferential error bound characterization, based on the Ekeland variational principle and approximate sum rule for Fr\'echet subdifferentials, to produce a direct proof of Theorem~\ref{T3.2}.
It was observed in \cite{YaoZhe16} that applying the Ekeland variational principle in the proof of results like Theorem~\ref{T3.2} in a slightly different way, one can obtain a sufficient subdifferential condition for H\"older error bounds in a different form.
Next we show that conditions of this type are also
direct consequences of Lemma~\ref{L1}.
The following statement is motivated by \cite[Theorem~3.1]{YaoZhe16}.
Note that, unlike Theorem~\ref{T3.2}, it is restricted to the case $q\le1$.

\begin{theorem}\label{T3.3}
Suppose $X$ is a Banach space, $f:X\rightarrow\mathbb{R}\cup\{+\infty\}$ is lower semicontinuous and $\bar x\in[f\leq0]$.
Let $\tau>0$, $\delta\in(0,\infty]$ and $q\in(0,1]$.
%AK7/03/18
If either $X$ is Asplund or $f$ is convex, and
\begin{multline}\label{T3.3-1}
d(x,[f\leq0])^{q-1}d(0,\partial f(x))^q\ge\tau
\quad\mbox{for all}\quad
x\in B_\delta(\bar x)\cap[f>0]\\
\mbox{with}\;\;
f^q(x)<\tau d(x,[f\leq0]),
\end{multline}
then
%$[f\leq0]\ne\es$ and
\begin{gather}\label{T3.3-2}
\alpha^q(1-\alpha)^{1-q}\tau d(x,[f\leq0])\leq f_+^q(x)
\quad\mbox{for all}\quad
\alpha\in(0,1)
\quad\mbox{and}\quad
x\in B_{\frac{\delta}{1+\alpha}}(\bar x).
\end{gather}
\end{theorem}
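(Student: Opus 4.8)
The plan is to reduce Theorem~\ref{T3.3} to the linear error bound Lemma~\ref{L1}, applied \emph{not} to $f_+^q$ (as is done for Theorem~\ref{T3.2}) but to $f$ itself, with a slope threshold $\tau'$ that is tailored to the point $x$ and to the splitting parameter $\alpha$. First I would argue by contradiction: suppose \eqref{T3.3-2} fails, so there are $\alpha\in(0,1)$ and $x\in B_{\frac{\delta}{1+\alpha}}(\bar x)$ with $f_+^q(x)<\alpha^q(1-\alpha)^{1-q}\tau\,d(x,[f\le0])$. Writing $d:=d(x,[f\le0])$, this forces $d>0$, and hence $f(x)=f_+(x)>0$.

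The key choice is $\tau':=\tau^{1/q}\big((1-\alpha)d\big)^{(1-q)/q}$, which is positive since $\alpha<1$. A direct computation using $\tfrac{1-q}{q}+1=\tfrac1q$ gives $\alpha\tau' d=\big(\alpha^q(1-\alpha)^{1-q}\tau d\big)^{1/q}$, so the failure hypothesis reads exactly $f(x)<\alpha\tau' d$; that is, conclusion \eqref{L1-2} of Lemma~\ref{L1} fails for the constants $(\tau',\alpha)$. By the contrapositive of Lemma~\ref{L1} (exactly as used in the proof of Theorem~\ref{T3.1}, taking $M:=f(x)+1$ in the Asplund case), its sufficient condition \eqref{L1-1}/\eqref{L1-1c} must fail, producing a point $u$ with $\|u-x\|<\alpha d$, $f(u)>0$, $f(u)<\tau' d(u,[f\le0])$ and $d(0,\partial f(u))<\tau'$.

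Then I verify that $u$ lies in the domain of the hypothesis \eqref{T3.3-1}. Membership $u\in B_\delta(\bar x)$ follows from $\|u-\bar x\|\le\|u-x\|+\|x-\bar x\|<(1+\alpha)\|x-\bar x\|<\delta$, using $d\le\|x-\bar x\|<\frac{\delta}{1+\alpha}$; and $f(u)>0$ gives $u\in[f>0]$. The crucial estimate is $d(u,[f\le0])\ge d-\|u-x\|>(1-\alpha)d$; since $q\le1$ makes $t\mapsto t^{1-q}$ nondecreasing, raising $f(u)<\tau' d(u,[f\le0])$ to the power $q$ (and using $\tau'^q=\tau((1-\alpha)d)^{1-q}$) yields $f^q(u)<\tau((1-\alpha)d)^{1-q}d(u,[f\le0])^q\le\tau\,d(u,[f\le0])$, so $u$ satisfies the qualifying inequality in \eqref{T3.3-1}. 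Finally, \eqref{T3.3-1} forces $d(0,\partial f(u))\ge\tau^{1/q}d(u,[f\le0])^{(1-q)/q}>\tau^{1/q}((1-\alpha)d)^{(1-q)/q}=\tau'$, contradicting $d(0,\partial f(u))<\tau'$ and completing the argument.

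The main obstacle, and the reason the theorem is restricted to $q\in(0,1]$, is that the effective slope threshold implicit in \eqref{T3.3-1} is \emph{not} constant: it equals $\tau^{1/q}d(u,[f\le0])^{(1-q)/q}$ and thus varies with $u$. To feed the constant-threshold Lemma~\ref{L1} I must bound this quantity below uniformly over the ball $\{\,\|u-x\|<\alpha d\,\}$, and this is exactly where the monotonicity of $t\mapsto t^{(1-q)/q}$ (valid because $\tfrac{1-q}{q}\ge0$) together with the elementary estimate $d(u,[f\le0])>(1-\alpha)d$ enters. It is precisely this lower bound that both pins down the choice of $\tau'$ and generates the extra factor $(1-\alpha)^{1-q}$ appearing in the conclusion \eqref{T3.3-2}.
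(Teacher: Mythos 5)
Your proposal is correct and takes essentially the same route as the paper's own proof: the identical contradiction setup, the identical point-dependent threshold $\tau':=\tau^{1/q}\big((1-\alpha)\,d(x,[f\le 0])\big)^{\frac{1}{q}-1}$ fed into Lemma~\ref{L1}, and the identical verification chain ($\|u-\bar x\|<\delta$, $d(u,[f\le 0])>(1-\alpha)\,d(x,[f\le 0])$, $f^q(u)<\tau\, d(u,[f\le 0])$) leading to a contradiction with \eqref{T3.3-1}. The only cosmetic difference is that you phrase the final contradiction as a lower bound on $d(0,\partial f(u))$ rather than an upper bound on $d(u,[f\le 0])^{q-1}d(0,\partial f(u))^{q}$, which is equivalent.
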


\begin{proof}
Suppose that condition \eqref{T3.3-2} is not satisfied, i.e.
{$f_+^q(x)<\alpha^q(1-\alpha)^{1-q}\tau d(x,{[f\leq0]})$}
for some $\al\in(0,1)$ and
%some
$x\in B_{\frac{\delta}{1+\alpha}}(\bar x)$.
Then $d(x,[f\leq0])>0$, and consequently, $f_+(x)=f(x)>0$.
Set $\tau':=\tau^{
{\frac{1}{q}}
}\big((1-\alpha) d(x,[f\leq0])\big)^{\frac{1}{q}-1}$.
Then $0<f(x)<\alpha\tau' d(x,[f\leq0])$.
By Lemma~\ref{L1}, there exists a $u\in X$
with
$\|u-x\|<\alpha d(x,[f\leq0])$
and
$f(u)<\tau' d(u,[f\le0])$ such that $d(0,\partial f(u))<\tau'$.
Observe that
{$f(u)>0$,}
\begin{gather}\label{T3.3P1}
\|u-\bx\|\le\|u-x\|+\|x-\bx\|<(\al+1)\|x-\bx\|<\de,
\\\label{T3.3P2}
(1-\alpha) d(x,[f\leq0])<d(x,[f\leq0])-\|u-x\|\le d(u,[f\leq0]),
\\\label{T3.3P3}
{(\tau')^q=\tau
\big((1-\alpha) d(x,[f\leq0])\big)^{1-q} \overset{\eqref{T3.3P2}}{<}\tau d(u,[f\leq0])^{1-q}},
\\\label{T3.3P4}
{f^q(u)<(\tau')^q d(u,[f\le0])^q\overset{\eqref{T3.3P3}}{<}\tau d(u,[f\leq0])},
\\\notag
{d(u,[f\leq0])^{q-1}d(0,\partial f(u))^q <d(u,[f\leq0])^{q-1}(\tau')^q \overset{\eqref{T3.3P3}}{<}\tau}.
\end{gather}
In view of \eqref{T3.3P1} and \eqref{T3.3P4}, this contradicts \eqref{T3.3-1} and completes the proof.
\qed\end{proof}

%AK31/03/18
Just like Theorem~\ref{T3.2}, when $q=1$ Theorem~\ref{T3.3} reduces to Theorem~\ref{T3.1}.
Thus, both Theorems~\ref{T3.2} and \ref{T3.3} generalize Theorem~\ref{T3.1} to the H\"older setting.

%8/06/18
The next statement is a simplified version of Theorem~\ref{T3.3}.
%with the the last inequality in \eqref{T3.3-1} dropped.

\begin{corollary}\label{C3.12}
Suppose $X$ is a Banach space, $f:X\rightarrow\mathbb{R}\cup\{+\infty\}$ is lower semicontinuous and $\bar x\in[f\leq0]$.
Let $\tau>0$, $\delta\in(0,\infty]$ and $q\in(0,1]$.
If either $X$ is Asplund or $f$ is convex, and
\begin{multline}\label{C3.12-1}
d(x,[f\leq0])^{q-1}d(0,\partial f(x))^q\ge\tau
\quad\mbox{for all}\quad
x\in B_\delta(\bar x)\cap[f>0]
\\
{\mbox{with}\;
f^q(x)<\tau\|x-\bx\|},
\end{multline}
then
%$[f\leq0]\ne\es$ and
condition \eqref{T3.3-2} holds true.
\end{corollary}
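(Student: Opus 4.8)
The plan is to deduce Corollary~\ref{C3.12} directly from Theorem~\ref{T3.3} by showing that its hypothesis \eqref{C3.12-1} is in fact \emph{stronger} than the hypothesis \eqref{T3.3-1} of the theorem, after which the conclusion \eqref{T3.3-2} is immediate. The parameters $\tau$, $\delta$ and $q$, the ambient domain $B_\delta(\bx)\cap[f>0]$, and the Asplund-or-convex alternative are identical in the two statements, so nothing has to be adjusted except the family of points over which the subdifferential estimate is required.

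The single observation driving everything is that, since $\bx\in[f\le0]$,
\begin{equation*}
d(x,[f\le0])=\inf_{a\in[f\le0]}\|x-a\|\le\|x-\bx\|\qquad\mbox{for every }x\in X.
\end{equation*}
First I would fix an arbitrary $x\in B_\delta(\bx)\cap[f>0]$ satisfying the defining inequality $f^q(x)<\tau d(x,[f\le0])$ of \eqref{T3.3-1}. Combining this with the displayed bound gives $f^q(x)<\tau d(x,[f\le0])\le\tau\|x-\bx\|$, so $x$ also satisfies the defining inequality $f^q(x)<\tau\|x-\bx\|$ of \eqref{C3.12-1}. Consequently the set of points for which \eqref{C3.12-1} imposes the estimate $d(x,[f\le0])^{q-1}d(0,\partial f(x))^q\ge\tau$ contains the set of points for which \eqref{T3.3-1} imposes it; hence assuming \eqref{C3.12-1} forces \eqref{T3.3-1} to hold as well. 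With \eqref{T3.3-1} in hand, I would simply invoke Theorem~\ref{T3.3} to obtain \eqref{T3.3-2}, which is exactly the asserted conclusion.

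I do not expect a genuine obstacle here: the whole content is the elementary bound $d(x,[f\le0])\le\|x-\bx\|$, and the only point requiring a moment's care is the direction of the implication. Replacing $d(x,[f\le0])$ by the larger quantity $\|x-\bx\|$ relaxes the per-point side condition, thereby \emph{enlarging} the family of points that must be tested; imposing the subdifferential estimate over this larger family is therefore a stronger assumption, not a weaker one, which is precisely why it still implies the theorem's hypothesis. As an alternative to this reduction, one could instead repeat the proof of Theorem~\ref{T3.3} verbatim and, at the final step, bound $d(u,[f\le0])\le\|u-\bx\|$ so that the point $u$ produced by Lemma~\ref{L1} contradicts \eqref{C3.12-1} rather than \eqref{T3.3-1}; but the reduction above is cleaner and avoids reproducing the Ekeland argument.
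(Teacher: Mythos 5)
Your proof is correct and coincides with the paper's own (implicit) argument: the paper presents Corollary~\ref{C3.12} as a ``simplified version'' of Theorem~\ref{T3.3}, justified exactly by the observation that since $\bx\in[f\le0]$ one has $d(x,[f\le0])\le\|x-\bx\|$, so the side condition $f^q(x)<\tau\|x-\bx\|$ is weaker than $f^q(x)<\tau d(x,[f\le0])$, making the corollary's hypothesis stronger and the conclusion an immediate consequence of the theorem. You also correctly handle the one delicate point, namely that weakening the per-point side condition enlarges the set of tested points and therefore strengthens, not weakens, the overall assumption.
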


As observed in Remark~\ref{R3.4}(ii) concerning Theorem~\ref{T3.1}
%8/06/18
{and Corollary~\ref{C3.5}},
the value of the parameter $\al$ in Theorem~\ref{T3.3}
%8/06/18
{and Corollary~\ref{C3.12}}
determines a tradeoff between the sharpness of the error bound estimate in \eqref{T3.3-2} and the size of the neighbourhood of $\bx$, where this estimate holds.
Thanks to the special form of the expression in the \LHS\ of the inequality in \eqref{T3.3-2}, the range of values of $\al$ in \eqref{T3.3-2} can be reduced, with the sharpest error bound estimate corresponding to taking $\al=q$.

\begin{proposition}\label{P3.12}
Under the assumptions of Theorem~\ref{T3.3}, and adopting the convention $0^0=1$, condition \eqref{T3.3-2} is equivalent to the following one:
\begin{gather}\label{T3.3-3}
\alpha^q(1-\alpha)^{1-q}\tau d(x,[f\leq0])\leq f_+^q(x)
\quad\mbox{for all}\quad
\alpha\in(0,q]
\quad\mbox{and}\quad
x\in B_{\frac{\delta}{1+\alpha}}(\bar x).
\end{gather}
The latter condition implies
\begin{gather}\label{T3.3-4}
q^q(1-q)^{1-q}\tau d(x,[f\leq0])\leq f_+^q(x)
\quad\mbox{for all}\quad
x\in B_{\frac{\delta}{1+q}}(\bar x).
\end{gather}
{Moreover, condition \eqref{T3.3-4} is equivalent to \eqref{T3.3-3} with the \nbh\ $B_{\frac{\delta}{1+\alpha}}(\bar x)$ replaced by $B_{\frac{\delta}{1+q}}(\bar x)$.}
\end{proposition}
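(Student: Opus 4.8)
The plan is to reduce the entire statement to a single elementary fact about the one-variable function $\varphi(\al):=\al^q(1-\al)^{1-q}$ on $(0,1)$, which is precisely the coefficient multiplying $\tau d(x,[f\le0])$ in each of \eqref{T3.3-2}, \eqref{T3.3-3} and \eqref{T3.3-4}. First I would show that $\varphi$ attains its maximum over $(0,1)$ at $\al=q$, with $\varphi(q)=q^q(1-q)^{1-q}$. This is immediate by logarithmic differentiation: from $\ln\varphi(\al)=q\ln\al+(1-q)\ln(1-\al)$ one obtains $\varphi'(\al)/\varphi(\al)=(q-\al)/\big(\al(1-\al)\big)$, whose sign is that of $q-\al$, so $\varphi$ increases on $(0,q]$ and decreases on $[q,1)$; in particular $\varphi(\al)\le\varphi(q)$ for all $\al\in(0,1)$. (When $q=1$ the factor $(1-\al)^{1-q}$ is identically $1$, and the value $\varphi(1)=1$ is read off through the convention $0^0=1$.) Alongside this I would record the obvious nesting of balls: since $\al'\le\al$ gives $\de/(1+\al)\le\de/(1+\al')$, one has $B_{\frac{\de}{1+\al}}(\bx)\subseteq B_{\frac{\de}{1+\al'}}(\bx)$.

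With these two observations the equivalence of \eqref{T3.3-2} and \eqref{T3.3-3} is short. For the substantive implication \eqref{T3.3-3}$\Rightarrow$\eqref{T3.3-2} I fix $\al\in(0,1)$ and $x\in B_{\frac{\de}{1+\al}}(\bx)$: if $\al\le q$ the inequality is already asserted by \eqref{T3.3-3}, while if $\al>q$ (possible only when $q<1$) the nesting gives $B_{\frac{\de}{1+\al}}(\bx)\subseteq B_{\frac{\de}{1+q}}(\bx)$, so applying \eqref{T3.3-3} at $\al=q$ on this larger ball together with $\varphi(\al)\le\varphi(q)$ yields $\varphi(\al)\tau d(x,[f\le0])\le\varphi(q)\tau d(x,[f\le0])\le f_+^q(x)$. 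Conversely, \eqref{T3.3-2}$\Rightarrow$\eqref{T3.3-3} is trivial for $q<1$ since then $(0,q]\subset(0,1)$; the only extra value is $\al=q=1$ when $q=1$, which I would recover by letting $\beta\uparrow 1$ in \eqref{T3.3-2}: every $x\in B_{\frac{\de}{2}}(\bx)$ lies in all balls $B_{\frac{\de}{1+\beta}}(\bx)$ with $\beta\in(0,1)$, so $\beta\,\tau d(x,[f\le0])\le f_+^q(x)$ for every such $\beta$, and passing to the supremum over $\beta$ gives the case $\al=1$.

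The implication \eqref{T3.3-3}$\Rightarrow$\eqref{T3.3-4} is the mere specialization $\al=q$ (admissible since $q\in(0,q]$), giving coefficient $\varphi(q)=q^q(1-q)^{1-q}$ on the ball $B_{\frac{\de}{1+q}}(\bx)$. For the final equivalence, let $(\ast)$ denote condition \eqref{T3.3-3} with every ball $B_{\frac{\de}{1+\al}}(\bx)$ replaced by $B_{\frac{\de}{1+q}}(\bx)$. Then $(\ast)\Rightarrow$\eqref{T3.3-4} again by taking $\al=q$, and \eqref{T3.3-4}$\Rightarrow(\ast)$ because for each $\al\in(0,q]$ the maximality $\varphi(\al)\le\varphi(q)$ gives $\varphi(\al)\tau d(x,[f\le0])\le\varphi(q)\tau d(x,[f\le0])\le f_+^q(x)$ on the common ball $B_{\frac{\de}{1+q}}(\bx)$.

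No part of the argument uses anything beyond the algebraic form of the three conditions, so the whole substance lies in locating the maximizer of $\varphi$; the unimodality computed above is exactly what lets each ``$\al>q$'' case be reduced to the single comparison value $\al=q$. I expect the only genuinely delicate point to be the boundary case $q=1$, where $(0,q]$ acquires the extra endpoint $\al=1$ and the coefficient must be interpreted through $0^0=1$; there the equivalence at $\al=1$ is obtained via the one-sided limit described above rather than by direct substitution.
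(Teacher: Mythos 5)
Your proposal is correct and follows essentially the same route as the paper's own proof: both rest on the unimodality of $\alpha\mapsto\alpha^q(1-\alpha)^{1-q}$ with maximum at $\alpha=q$, handle the case $\alpha>q$ via that maximality together with the ball inclusion $B_{\frac{\delta}{1+\alpha}}(\bar x)\subset B_{\frac{\delta}{1+q}}(\bar x)$, and recover the endpoint $\alpha=q=1$ by taking a supremum over $\alpha\in(0,1)$ on $B_{\frac{\delta}{2}}(\bar x)$. The only difference is that you spell out (via logarithmic differentiation) the monotonicity the paper dismisses as ``easy to check,'' which is a harmless elaboration.
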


\begin{proof}
The implication $\eqref{T3.3-3}\;\Rightarrow\;\eqref{T3.3-4}$ is obvious, as well as the implication $\eqref{T3.3-2}\;\Rightarrow\;\eqref{T3.3-3}$
when $q<1$.
It is easy to check that in the latter case the function $\alpha\mapsto\alpha^q(1-\alpha)^{1-q}$
is strictly increasing on $(0,q)$ and strictly decreasing on $(q,1)$.
Hence, when $\alpha>q$, one has
$\alpha^q(1-\alpha)^{1-q}<q^q(1-q)^{1-q}$
and $B_{\frac{\delta}{1+\alpha}}(\bar x)\subset B_{\frac{\delta}{1+q}}(\bar x)$, and consequently, $\eqref{T3.3-3}\;\Rightarrow\;\eqref{T3.3-2}$.

When $q=1$, the implication $\eqref{T3.3-3}\;\Rightarrow\;\eqref{T3.3-2}$ is obvious.
For the converse implication, only the case $\al=1$ needs to be covered.
Condition \eqref{T3.3-2} implies
\begin{gather*}%\label{T3.3-2}
\alpha\tau d(x,[f\leq0])\leq f_+(x)
\quad\mbox{for all}\quad
\alpha\in(0,1)
\quad\mbox{and}\quad
x\in B_{\frac{\delta}{2}}(\bar x).
\end{gather*}
Taking supremum over $\al$ in the \LHS\ of the above inequality, we see that the inequality must hold also for $\al=1$.
%which is exactly condition \eqref{T3.3-4}.
%Hence, $\eqref{T3.3-2}\;\Rightarrow\; \eqref{T3.3-4}$, and consequently, $\eqref{T3.3-2}\;\Longleftrightarrow\;\eqref{T3.3-3}$.
{The `moreover' part is obvious since $\alpha^q(1-\alpha)^{1-q}\le q^q(1-q)^{1-q}$ for all $\al\in(0,q]$.}
\qed\end{proof}

%AK8/06/18
Thanks to Proposition~\ref{P3.12}, the sufficient error bound condition in Corollary~\ref{C3.12} can be simplified further.
\begin{corollary}\label{C3.14}
Suppose $X$ is a Banach space, $f:X\rightarrow\mathbb{R}\cup\{+\infty\}$ is lower semicontinuous and $\bar x\in[f\leq0]$.
Let $\tau>0$, $\delta\in(0,\infty]$ and $q\in(0,1]$.
If either $X$ is Asplund or $f$ is convex, and
\begin{multline}\label{C3.14-1}
q^q(1-q)^{1-q}d(x,[f\leq0])^{q-1}d(0,\partial f(x))^q\ge\tau
\quad\mbox{for all}\quad
x\in B_\delta(\bar x)\cap[f>0]
\\
{\mbox{with}\;
q^q(1-q)^{1-q}f^q(x)<\tau\|x-\bx\|},
\end{multline}
then
%$[f\leq0]\ne\es$ and
\begin{gather}\label{C3.14-2}
\tau d(x,[f\leq0])\leq f_+^q(x)
\quad\mbox{for all}\quad
x\in B_{\frac{\delta}{1+q}}(\bar x).
\end{gather}
\end{corollary}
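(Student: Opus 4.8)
The plan is to deduce this corollary from Corollary~\ref{C3.12} together with Proposition~\ref{P3.12} by a simple rescaling of the error bound constant. The factor $q^q(1-q)^{1-q}$ occurs in both the hypothesis \eqref{C3.14-1} and, through Proposition~\ref{P3.12}, in the sharpened conclusion \eqref{T3.3-4}, so the natural idea is to absorb it into a new constant and then read off the result.

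First I would set $\tau':=\tau/\big(q^q(1-q)^{1-q}\big)$, using the convention $0^0=1$ (so that $\tau'=\tau$ in the boundary case $q=1$). Dividing both the main inequality and the qualifying condition in \eqref{C3.14-1} by the positive number $q^q(1-q)^{1-q}$, I would observe that \eqref{C3.14-1} is precisely the hypothesis \eqref{C3.12-1} of Corollary~\ref{C3.12} with $\tau'$ in place of $\tau$: the main inequality becomes $d(x,[f\le0])^{q-1}d(0,\partial f(x))^q\ge\tau'$ on $B_\delta(\bar x)\cap[f>0]$, and the qualifying condition becomes $f^q(x)<\tau'\|x-\bx\|$.

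Applying Corollary~\ref{C3.12} with $\tau'$ then yields condition \eqref{T3.3-2} with $\tau'$ in place of $\tau$. By Proposition~\ref{P3.12} (which gives \eqref{T3.3-2}$\Leftrightarrow$\eqref{T3.3-3} and \eqref{T3.3-3}$\Rightarrow$\eqref{T3.3-4}, valid under the assumptions of Theorem~\ref{T3.3} inherited here), this in turn implies
\[
q^q(1-q)^{1-q}\tau'\, d(x,[f\le0])\le f_+^q(x) \quad\mbox{for all}\quad x\in B_{\frac{\delta}{1+q}}(\bar x).
\]
Since $q^q(1-q)^{1-q}\tau'=\tau$ by the definition of $\tau'$, this is exactly the desired conclusion \eqref{C3.14-2}.

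There is no serious obstacle here; the argument is pure rescaling. The only points that require a little care are verifying that the common factor $q^q(1-q)^{1-q}$ rescales \emph{both} parts of the hypothesis \eqref{C3.14-1} consistently, so that one lands exactly on the hypothesis \eqref{C3.12-1} of Corollary~\ref{C3.12} (and not merely on a stronger or weaker variant), and handling the boundary case $q=1$, where one invokes the convention $0^0=1$ to keep $\tau'$ well defined and where Proposition~\ref{P3.12} still supplies the sharpest estimate at $\alpha=q=1$.
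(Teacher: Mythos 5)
Your proof is correct and follows exactly the route the paper intends: the corollary is stated right after Proposition~\ref{P3.12} as a consequence of rescaling Corollary~\ref{C3.12} by $\tau':=\tau/\bigl(q^q(1-q)^{1-q}\bigr)$ and then passing from \eqref{T3.3-2} to \eqref{T3.3-4} via Proposition~\ref{P3.12}. Your attention to the consistency of the rescaling in both parts of \eqref{C3.14-1} and to the boundary case $q=1$ (where the convention $0^0=1$ and the supremum argument in Proposition~\ref{P3.12} cover $\alpha=1$) matches the paper's treatment.
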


In view of Corollary~\ref{C3.8} and definition \eqref{Er}, the expression
\begin{gather}\label{Er2}
\underline{\Erq}'f(\bx):=q^q(1-q)^{1-q}\liminf_{x\to\bar x,\;f(x)
{\downarrow}
0} \frac{d(0,\partial f(x))^q}{d(x,[f\le0])^{1-q}}
\end{gather}
provides a lower estimate for the modulus $\Erq f(\bx)$ of $q$-order error bounds of $f$ at $\bx$ which complements \eqref{Er1}.

\begin{example}\label{E3.16}
Let $f:\R\to\R$ be defined as in Example~\ref{E3.6}: $f(x)=x^2$ if $x\ge0$ and $f(x)=0$ if $x<0$.
As computed in Example~\ref{E3.6}, for any $x>0$, we have $d(x,[f\le0])=x$ and $d(0,\partial f(x))=f'(x)=2x$.
Now, with $q=\frac{1}{2}$, we have for any $x>0$:
{
$$q^q(1-q)^{1-q}d(x,[f\leq0])^{q-1}d(0,\partial f(x))^q=\frac{1}{2}\cdot\frac{1}{\sqrt{x}}\cdot\sqrt{2x} =\frac{1}{\sqrt{2}}.$$
Hence, condition \eqref{C3.14-1} is satisfied with $q=\frac{1}{2}$ and any $\tau\in(0,\frac{1}{\sqrt{2}}]$ and $\delta\in(0,\infty]$.
%With $q=\frac{1}{2}$, the inequality in \eqref{T3.3-2} becomes $\sqrt{\al(1-\al)}\;\tau x_+\le x_+$.
%It is indeed satisfied for all $\tau\in(0,\sqrt{2}]$, $\al\in(0,1)$ and $x\in\R$.
%Note that the expression $\sqrt{\al(1-\al)}$ attains its maximum over $\al\in(0,1)$ at $\al=\frac{1}{2}$.
Thus, Corollary~\ref{C3.14} gives in this example a global error bound estimate with constant up to $\frac{1}{\sqrt{2}}$,}
while we know from Example~\ref{E3.6}, which is a consequence of Theorem~\ref{T3.2}, that a global error bound estimate holds actually with any constant up to 1.
%This agrees with Proposition~\ref{P3.9}: Theorem~\ref{T3.3} uses a weaker assumption and leads to a weaker conclusion.
\qed\end{example}

The next proposition
shows that
{a slightly strengthened version of}
the sufficient error bound
{condition in Corollary~\ref{C3.8} implies that in Corollary~\ref{C3.12} (or \ref{C3.14}).}
In view of the obvious similarity of the concluding conditions \eqref{T3.2-2} (with $\al=1$) in Corollary~\ref{C3.8} and \eqref{C3.14-2} in Corollary~\ref{C3.14}, below we compare their assumptions.
%\eqref{P3.9-1} and \eqref{C3.14-1}.

\begin{proposition}\label{P3.9}
%AK 5/11/17
Suppose $X$ is a Banach space, $f:X\rightarrow\mathbb{R}\cup\{+\infty\}$ is lower semicontinuous and $\bar x\in[f\leq0]$.
Let $\tau>0$, $\delta\in(0,\infty]$ and $q\in(0,1]$,
%8/06/18
%If either $X$ is Asplund or $f$ is convex, and
and the convention $0^0=1$ be in force.
Suppose also that either $X$ is Asplund or $f$ is convex.
{If
%condition \eqref{P3.9-1} is satisfied,
\begin{gather}\label{P3.9-1+}
q f^{q-1}(x)d(0,\partial f(x))\ge\tau
\;\;\mbox{for all}\;\;
x\in B_\delta(\bar x)\cap[f>0]
{\;\mbox{with}\;
q^qf^q(x)<\tau\|x-\bx\|},
\end{gather}
then
\begin{gather*}%\label{C3.14-1}
q^qd(x,[f\leq0])^{q-1}d(0,\partial f(x))^q\ge\tau
\;\;\mbox{for all}\;\;
x\in B_{\frac{\delta}{2}}(\bar x)\cap[f>0]
{\;\mbox{with}\;
q^qf^q(x)<\tau\|x-\bx\|},
\end{gather*}
i.e. condition \eqref{C3.14-1} is satisfied with $\tau':=(1-q)^{1-q}\tau$ and $\de':=\frac{\delta}{2}$ in place of $\tau$ and $\de$, respectively.}
\end{proposition}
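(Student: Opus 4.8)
The plan is to obtain the conclusion from the error bound that the hypothesis \eqref{P3.9-1+} already guarantees through Corollary~\ref{C3.8}, rather than by a purely pointwise rearrangement (which, as I explain at the end, cannot succeed). First I would normalize the constants: condition \eqref{C3.14-1} written with $\tau'=(1-q)^{1-q}\tau$ and $\de'=\delta/2$ reads $q^q(1-q)^{1-q}d(x,[f\le0])^{q-1}d(0,\partial f(x))^q\ge(1-q)^{1-q}\tau$ on $B_{\delta/2}(\bx)\cap[f>0]$ with $q^q(1-q)^{1-q}f^q(x)<(1-q)^{1-q}\tau\|x-\bx\|$; cancelling the common factor $(1-q)^{1-q}$ from both the inequality and its restriction shows that this is exactly the displayed target $q^qd(x,[f\le0])^{q-1}d(0,\partial f(x))^q\ge\tau$ on $B_{\delta/2}(\bx)\cap[f>0]$ with $q^qf^q(x)<\tau\|x-\bx\|$. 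When $q=1$ both exponents degenerate (using $0^0=1$) and the claim reduces to the hypothesis on a smaller ball, so I would treat only $q\in(0,1)$.

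The first substantive step uses that $q\in(0,1]$ forces $q^q\le1$, whence $q^qf^q(x)\le f^q(x)$ and the restriction set of \eqref{P3.9-1+} contains that of \eqref{P3.9-1}. Thus \eqref{P3.9-1+} is formally stronger than the hypothesis \eqref{P3.9-1} of Corollary~\ref{C3.8}, so that corollary applies; taking $\al=1$ in its conclusion \eqref{T3.2-2} yields the error bound $\tau d(x,[f\le0])\le f_+^q(x)$ for all $x\in B_{\delta/2}(\bx)$, and in particular $\tau d(x,[f\le0])\le f^q(x)$ whenever $x\in B_{\delta/2}(\bx)\cap[f>0]$. It then remains to feed this back in pointwise. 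Fix such an $x$ with $q^qf^q(x)<\tau\|x-\bx\|$ and abbreviate $r:=d(0,\partial f(x))$, $s:=f(x)>0$, and $D:=d(x,[f\le0])>0$ (positive since $[f\le0]$ is closed and $x\notin[f\le0]$). Since $x\in B_{\delta/2}(\bx)\subseteq B_\delta(\bx)$ and its restriction holds, \eqref{P3.9-1+} applies at $x$ and gives $qs^{q-1}r\ge\tau$, i.e. $r\ge(\tau/q)s^{1-q}$. Substituting and using $q^q(\tau/q)^q=\tau^q$, I get $q^qD^{q-1}r^q\ge\tau^qD^{q-1}s^{q(1-q)}$, so the target $q^qD^{q-1}r^q\ge\tau$ follows as soon as $s^{q(1-q)}\ge(\tau D)^{1-q}$; this is precisely the error bound $\tau D\le s^q$ raised to the positive power $1-q$, which closes the argument.

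The main obstacle is exactly the necessity of that detour. Applying \eqref{P3.9-1+} at $x$ and rearranging only delivers the target under the extra inequality $s^q\ge\tau D$, and this points in the opposite direction from the available restriction $q^qs^q<\tau\|x-\bx\|$ (note that $\|x-\bx\|\ge D$ gives an upper, not a lower, bound on $s^q$). The genuine content of the proposition is therefore that Corollary~\ref{C3.8} manufactures precisely the missing inequality $\tau D\le s^q$, after which the exponent bookkeeping is routine.
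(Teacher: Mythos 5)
Your proof is correct and follows essentially the same route as the paper's: the paper also observes that \eqref{P3.9-1+} implies \eqref{P3.9-1} (since $q^q\le1$), invokes Corollary~\ref{C3.8} to get the error bound \eqref{T3.2-2} with $\al=1$ on $B_{\delta/2}(\bar x)$, and then performs exactly your pointwise exponent computation, rewriting $q^qd(x,[f\leq0])^{q-1}d(0,\partial f(x))^q\ge \tau\left(\big(\tau d(x,[f\leq0])\big)^{-1}f^{q}(x)\right)^{1-q}\ge\tau$. Your explicit normalization of the constants and the separate treatment of $q=1$ are minor additions of rigor (the paper handles $q=1$ uniformly via the $0^0=1$ convention), and your closing remark correctly identifies why the detour through the error bound is the essential content.
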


\begin{proof}
Suppose that
%AK 5/11/17
condition \eqref{P3.9-1+} is satisfied.
%with some $\tau>0$, $\delta\in(0,\infty]$ and $q\in(0,1]$.
Then,
{condition \eqref{P3.9-1} is satisfied too and,}
by
%AK 8/06/18
%Theorem~\ref{T3.2},
Corollary~\ref{C3.8},
condition \eqref{T3.2-2} holds true.
In view of conditions \eqref{P3.9-1} and \eqref{T3.2-2} with $\al=1$, we have for any $x\in B_{\frac{\delta}{2}}(\bar x)\cap[f>0]$
{with
$q^qf^q(x)<\tau\|x-\bx\|$}:
%AK 11/11/17
\begin{multline*}%\label{T3.3-1}
q^qd(x,[f\leq0])^{q-1}d(0,\partial f(x))^q =d(x,[f\leq0])^{q-1}\big(q d(0,\partial f(x))\big)^q
\\
\ge d(x,[f\leq0])^{q-1}\big(\tau f^{1-q}(x)\big)^q
=\tau\left(\big(\tau d(x,[f\leq0])\big)^{-1}f^{q}(x)\right)^{1-q} \ge\tau.
\end{multline*}
This completes the proof.
\qed\end{proof}

Proposition~\ref{P3.9} allows us to establish a relationship between the lower error bound estimates \eqref{Er1} and \eqref{Er2}.

\begin{corollary}\label{C3.18}
Suppose $X$ is a Banach space, $f:X\rightarrow\mathbb{R}\cup\{+\infty\}$ is lower semicontinuous and $\bar x\in[f\leq0]$.
Let $q\in(0,1]$
and the convention $0^0=1$ be in force.
{
If $X$ is Asplund or $f$ is convex,
then
\begin{gather}\label{P3.9-3}
{(1-q)^{1-q}\,\underline{\Erq}f(\bx)\le \underline{\Erq}'f(\bx)}.
\end{gather}
}
\end{corollary}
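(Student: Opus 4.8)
The plan is to pass to the limit in the pointwise inequality supplied by Proposition~\ref{P3.9}, after first reducing the claim to a single constant. Since $\underline{\Erq}'f(\bx)$ is a lower limit of nonnegative quantities, it is $\ge0$, so \eqref{P3.9-3} is trivial when $\underline{\Erq}f(\bx)=0$. Assuming $\underline{\Erq}f(\bx)>0$, I would fix an arbitrary $\tau\in(0,\underline{\Erq}f(\bx))$ and reduce the goal to the inequality $(1-q)^{1-q}\tau\le\underline{\Erq}'f(\bx)$; letting $\tau\uparrow\underline{\Erq}f(\bx)$ then gives \eqref{P3.9-3}.

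First I would read off from definition \eqref{Er1} that, for my chosen $\tau$, there exist $\delta_0,\eta_0>0$ with $qd(0,\partial f(x))\ge\tau f^{1-q}(x)$ whenever $\|x-\bx\|<\delta_0$ and $0<f(x)<\eta_0$. For a small radius $\delta$ this is exactly the subdifferential hypothesis \eqref{P3.9-1+} of Proposition~\ref{P3.9}: its side constraint $q^qf^q(x)<\tau\|x-\bx\|<\tau\delta$ forces $f(x)<(\tau\delta/q^q)^{1/q}<\eta_0$, so every admissible point already lies in the region where the liminf estimate is in force. Proposition~\ref{P3.9} then yields the pointwise inequality $q^qd(x,[f\le0])^{q-1}d(0,\partial f(x))^q\ge\tau$ on $B_{\frac{\delta}{2}}(\bx)\cap[f>0]$.

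The main obstacle is that this conclusion is asserted only under the side condition $q^qf^q(x)<\tau\|x-\bx\|$, whereas the liminf defining $\underline{\Erq}'f(\bx)$ ranges over all $x\to\bx$ with $f(x)\downarrow0$. I would circumvent it by dropping the side constraint and re-running the short chain of inequalities from the proof of Proposition~\ref{P3.9} directly. To this end I first upgrade the subdifferential bound to a genuine error bound: since $X$ is Asplund or $f$ is convex, Corollary~\ref{C3.8} (whose hypothesis \eqref{P3.9-1} is implied by the estimate above) gives $\tau d(x,[f\le0])\le f^q(x)$ on $B_{\frac{\delta}{2}}(\bx)$. Then for every $x$ near $\bx$ with $0<f(x)<\eta_0$, and with no side condition imposed, both $qd(0,\partial f(x))\ge\tau f^{1-q}(x)$ and $\tau d(x,[f\le0])\le f^q(x)$ hold, and
\[
q^qd(x,[f\le0])^{q-1}d(0,\partial f(x))^q\ge d(x,[f\le0])^{q-1}\big(\tau f^{1-q}(x)\big)^q =\tau\Big(\big(\tau d(x,[f\le0])\big)^{-1}f^q(x)\Big)^{1-q}\ge\tau,
\]
the last step using $1-q\ge0$ together with the error bound.

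Finally I would multiply the last display by $(1-q)^{1-q}$, recognise the left-hand side as the expression under the liminf in definition \eqref{Er2}, and take the lower limit as $x\to\bx$ with $f(x)\downarrow0$; since every such $x$ eventually satisfies $\|x-\bx\|<\delta/2$ and $f(x)<\eta_0$, this gives $\underline{\Erq}'f(\bx)\ge(1-q)^{1-q}\tau$. As $\tau\in(0,\underline{\Erq}f(\bx))$ was arbitrary, letting $\tau\uparrow\underline{\Erq}f(\bx)$ completes the argument.
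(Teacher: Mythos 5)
Your proof is correct, and its skeleton is the same as the paper's: fix $\tau\in(0,\underline{\Erq}f(\bx))$, observe that definition \eqref{Er1} yields condition \eqref{P3.9-1+} for a suitable $\delta>0$, run it through Proposition~\ref{P3.9}, conclude $\underline{\Erq}'f(\bx)\ge(1-q)^{1-q}\tau$ from definition \eqref{Er2}, and let $\tau\uparrow\underline{\Erq}f(\bx)$. Where you genuinely depart from the paper is the middle step, and the departure is to your credit. The paper's proof cites Proposition~\ref{P3.9} as a black box and then asserts the bound on $\underline{\Erq}'f(\bx)$ ``by definition \eqref{Er2}'', passing silently over precisely the mismatch you single out as the main obstacle: the conclusion \eqref{C3.14-1} of Proposition~\ref{P3.9} controls $q^q(1-q)^{1-q}d(x,[f\leq0])^{q-1}d(0,\partial f(x))^q$ only at points obeying the side constraint $q^qf^q(x)<\tau\|x-\bx\|$, whereas the liminf in \eqref{Er2} ranges over all $x\to\bx$ with $f(x)\downarrow0$. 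Your repair---upgrading the subdifferential estimate to the unconditional error bound $\tau d(x,[f\leq0])\le f_+^q(x)$ on $B_{\frac{\delta}{2}}(\bx)$ via Corollary~\ref{C3.8}, and then re-running the two-line chain from the proof of Proposition~\ref{P3.9} at every $x$ near $\bx$ with $0<f(x)<\eta_0$---is exactly what is needed, and the individual steps check out: the exponent manipulation uses only $1-q\ge0$ together with the error bound, and $d(x,[f\leq0])>0$ at the relevant points since $f$ is lower semicontinuous and $f(x)>0$. An alternative one-line patch worth noting: at a point violating the side constraint one has $q^qf^q(x)\ge\tau\|x-\bx\|\ge\tau d(x,[f\leq0])$, which can play the role of the error bound in the same chain and even yields the better constant $q^{-q(1-q)}\tau$. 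Either way, your write-up is the paper's argument with its one terse step made rigorous.
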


\begin{proof}
If $\underline{\Erq}f(\bx)=0$, the first inequality in \eqref{P3.9-3} holds true trivially.
Suppose that $0<\tau<\underline{\Erq}f(\bx)$.
By definition \eqref{Er1}, condition \eqref{P3.9-1+} is satisfied with some number $\delta>0$ and, by Proposition~\ref{P3.9}, condition \eqref{C3.14-1} is satisfied with $\tau':=(1-q)^{1-q}\tau$ and $\de':=\frac{\delta}{2}$ in place of $\tau$ and $\de$, respectively.
Hence, by definition \eqref{Er2}, $\underline{\Erq}'f(\bx)\ge(1-q)^{1-q}\tau$.
Passing to the limit as $\tau\uparrow\underline{\Erq}f(\bx)$ proves
%the first inequality in
\eqref{P3.9-3}.
\qed\end{proof}

{
\begin{remark}
In view of
%Proposition~\ref{P3.9} and
Corollary~\ref{C3.18}, the sufficient error bound
condition
%in Corollary~\ref{C3.12} (or \ref{C3.14})
$\underline{\Erq}'f(\bx)>0$
is in general weaker than
%the one in Corollary~\ref{C3.8}.
$\underline{\Erq}f(\bx)>0$.
At the same time, it also yields a weaker error bound estimate -- see \eqref{P3.9-3}.
This is illustrated by Example~\ref{E3.16}, where $\underline{\Erq}f(0)=1$, $\underline{\Erq}'f(0)=(1-q)^{1-q}=\frac{1}{\sqrt{2}}$, i.e. condition \eqref{P3.9-3} holds as equality.
\end{remark}
}

{
Inequality \eqref{P3.9-3} relating the two lower estimates for the modulus $\Erq f(\bx)$ can be strict.
Moreover, it can happen that $\underline{\Erq}f(\bx)=0$ while $\underline{\Erq}'f(\bx)>0$.
In such cases, $\underline{\Erq}'f(\bx)$ detects $q$-order error bounds while $\underline{\Erq}f(\bx)$ fails.
\begin{example}\label{E3.20}
Let $f:\R\to\R$ be given by
$$
f(x):=
\begin{cases}
0& \text{if } x\le0,
\\
x^2+\frac{1}{n}-\frac{1}{n^2}& \text{if } \frac{1}{n}<x\le\frac{1}{n-1},\; n=3,4\ldots,
\\
x^2+\frac{1}{4}& \text{if } x>\frac{1}{2}.
\end{cases}
$$
For any $n=3,4\ldots$ and $x\in\big(\frac{1}{n},\frac{1}{n-1}\big]$, we have $\frac{1}{n}<f(x) \le\frac{1}{(n-1)^2}+\frac{1}{n}-\frac{1}{n^2}$.
Hence $[f\le0]=\R_-$ and $f(x)\to0$ as $x\downarrow0$.
At the points $x_n:=\frac{1}{n-1}$, $n=3,4\ldots$,
the function is continuous from the left.
Moreover,
$$
f(x_n)-\lim_{x\downarrow x_n}f(x) =\frac{1}{(n-1)^2}+\frac{1}{n}-\frac{1}{n^2}-\frac{1}{n-1}
=-\frac{n^2-3n+1}{(n-1)^2n^2}<0.
$$
Hence, $f$ is \lsc.
For any $x>0$, we have $d(x,[f\le0])=x$, $f'(x)=2x$ if $x\ne x_n$ and $\sd f(x_n)=[2x_n,+\infty)$, $n=3,4\ldots$, and consequently,
$d(0,\partial f(x))=2x$ for all $x>0$.
With $q=\frac{1}{2}$, we have $d(x,[f\le0])^{q-1}d(0,\partial f(x))^q =x^{-\frac{1}{2}}(2x)^{\frac{1}{2}}=\sqrt{2}>0$ for any $x>0$;
hence, $\underline{\Erq}'f(\bx)>0$.
At the same time,
$f^{q-1}(x_n)d(0,\partial f(x_n)) <\left(\frac{1}{n}\right)^{-\frac{1}{2}}\frac{2}{n-1} =\frac{2\sqrt{n}}{n-1}\to0$ as $n\to\infty$;
hence, $\underline{\Erq}f(\bx)=0$.
\qed\end{example}
}

In some situations, it can be convenient to
%AK 9/06/18
%use the statement of
{reformulate}
Theorem~\ref{T3.3} in a
{slightly}
different form given in the next corollary.

%AK 11/11/17
\begin{corollary}\label{C3.10}
Suppose $X$ is a Banach space, $f:X\rightarrow\mathbb{R}\cup\{+\infty\}$ is lower semicontinuous and $\bar x\in[f\leq0]$.
Let $\tau>0$, $\delta\in(0,\infty]$ and $p\ge0$.
If either $X$ is Asplund or $f$ is convex, and
\begin{multline}\label{C3.10-1}
d(0,\partial f(x))\ge\tau d(x,[f\leq0])^p
\quad\mbox{for all}\quad
x\in B_\delta(\bar x)\cap[f>0]\\
\mbox{with}\;\;
f(x)<\tau d(x,[f\leq0])^{p+1},
\end{multline}
then
\begin{gather}\label{C3.10-2}
\alpha(1-\alpha)^p\tau d(x,[f\leq0])^{p+1}\leq f_+(x)
\quad\mbox{for all}\quad
\alpha\in(0,1)
\quad\mbox{and}\quad
x\in B_{\frac{\delta}{1+\alpha}}(\bar x).
\end{gather}
\end{corollary}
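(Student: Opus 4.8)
The plan is to recognize Corollary~\ref{C3.10} as a direct reparametrization of Theorem~\ref{T3.3}: setting $q:=1/(p+1)\in(0,1]$ converts the ``$(p+1)$-th power'' error bound in \eqref{C3.10-2} into the $q$-order error bound in \eqref{T3.3-2}. Concretely, I would apply Theorem~\ref{T3.3} to the very same function $f$ with this value of $q$ and with the rescaled constant $\tau':=\tau^{q}=\tau^{1/(p+1)}$, keeping $\delta$ unchanged. Since the range $p\ge0$ corresponds exactly to $q\in(0,1]$, and $p=0$ gives $q=1$ (where Theorem~\ref{T3.3} collapses to Theorem~\ref{T3.1}), the full scope of the corollary is covered.

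First I would check that hypothesis \eqref{C3.10-1} implies hypothesis \eqref{T3.3-1} of Theorem~\ref{T3.3} under this substitution. Fix $x\in B_\delta(\bx)\cap[f>0]$ and write $d:=d(x,[f\le0])$; lower semicontinuity of $f$ together with $f(x)>0$ guarantees $d>0$, so all manipulations with powers of $d$ are legitimate (and $[f\le0]\ne\es$ since $\bx\in[f\le0]$). The two side conditions coincide: raising $f^q(x)<\tau' d$ to the power $1/q=p+1$ turns it into $f(x)<\tau d^{p+1}$, which is exactly the side condition in \eqref{C3.10-1}. On this common set, \eqref{C3.10-1} gives $d(0,\partial f(x))\ge\tau d^{p}$; raising to the power $q$ and multiplying by $d^{q-1}$ yields $d^{q-1}d(0,\partial f(x))^q\ge\tau^q d^{q-1+pq}=\tau^q=\tau'$, because $q-1+pq=q(p+1)-1=0$ for $q=1/(p+1)$. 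This is precisely \eqref{T3.3-1}.

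Having verified the hypothesis, Theorem~\ref{T3.3} yields $\alpha^q(1-\alpha)^{1-q}\tau'\, d(x,[f\le0])\le f_+^q(x)$ for all $\alpha\in(0,1)$ and $x\in B_{\frac{\delta}{1+\alpha}}(\bx)$. The final step is to raise both sides to the power $1/q=p+1\ge1$; since both sides are nonnegative this preserves the inequality, and using $(\tau')^{1/q}=\tau$, $(1-q)/q=p$ and $1/q=p+1$ it becomes exactly \eqref{C3.10-2}.

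I do not expect any serious obstacle, since the substance is entirely contained in Theorem~\ref{T3.3} and what remains is exponent bookkeeping. The one point that needs care is the identity $q-1+pq=0$, which underlies the whole correspondence and forces the choice $q=1/(p+1)$; keeping the rescaling $\tau'=\tau^{1/(p+1)}$ consistent simultaneously across the side condition, the main inequality, and the conclusion is the only place where an error could slip in.
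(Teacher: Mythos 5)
Your proposal is correct and is essentially identical to the paper's own proof, which likewise obtains Corollary~\ref{C3.10} by substituting $q=\frac{1}{p+1}$ and $\tau^{\frac{1}{p+1}}$ in place of $\tau$ in Theorem~\ref{T3.3}; you have merely written out the exponent bookkeeping (the identity $q-1+pq=0$ and the equivalence of the side conditions) that the paper leaves implicit.
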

\begin{proof}
Setting $q:=\frac{1}{p+1}$ and replacing $\tau$ with $\tau^{\frac{1}{p+1}}$ in the statement of Theorem~\ref{T3.3}, reduces it to that of the above corollary.
\qed\end{proof}

\begin{remark}%\label{R2.2}
Corollary~\ref{C3.10} improves \cite[Corollary~3.1]{YaoZhe16}, which claims a weaker conclusion under stronger assumptions.
Condition \eqref{C3.10-2} is referred to in \cite{YaoZhe16} as \emph{$(p+1)$-order error bound}.
\end{remark}

Combining
%8/06/18
%Theorem~\ref{T3.2} and \ref{T3.3},
{Corollaries~\ref{C3.8} and \ref{C3.14},} and
Proposition~\ref{P3.9},
we can formulate quantitative and qualitative sufficient subdifferential conditions for H\"older error bounds.

\begin{theorem}\label{T3.21}
Suppose $X$ is a Banach space, $f:X\rightarrow\mathbb{R}\cup\{+\infty\}$ is lower semicontinuous and $\bar x\in[f\leq0]$.
Let
{either $X$ is Asplund or $f$ is convex},
$\tau>0$, $q\in(0,1]$,
and the convention $0^0=1$ be in force.
Consider the following conditions:
\begin{enumerate}
\item
$\tau d(x,[f\leq0])\leq f_+^q(x)$
for all $x$ near $\bx$;
\item
$q f^{q-1}(x)d(0,\partial f(x))\ge\tau$
for all
$x\in[f>0]$ near $\bx$;
{
\item
$q^q(1-q)^{1-q}d(x,[f\leq0])^{q-1}d(0,\partial f(x))^q\ge\tau$
for all
$x\in[f>0]$ near $\bx$.}
\end{enumerate}
Then
{{\rm (ii) \folgt (i), (iii) \folgt (i)}, and {\rm (ii) \folgt (iii)} with $(1-q)^{1-q}\tau$ in place of $\tau$.}
If $q=1$, then conditions {\rm (ii)} and {\rm (iii)} coincide.
\end{theorem}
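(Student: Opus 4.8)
The plan is to read Theorem~\ref{T3.21} as the qualitative, purely local repackaging of the quantitative results already established, namely Corollary~\ref{C3.8}, Corollary~\ref{C3.14} and Proposition~\ref{P3.9}. The key observation is that each of the conditions (i)--(iii) is assumed (or concluded) to hold \emph{for all $x$ near $\bx$}, with no auxiliary restriction on $x$. This unconditional form is exactly what allows them to be fed into the cited statements, whose hypotheses only require the corresponding inequality on the smaller sets carved out by restrictions such as $f^q(x)<\tau\|x-\bx\|$, and whose conclusions hold on contracted neighbourhoods $B_{\delta/2}(\bx)$ or $B_{\delta/(1+q)}(\bx)$. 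I would prove the four assertions one at a time.

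For {\rm (ii) \folgt (i)}: condition (ii), holding for all $x\in[f>0]$ near $\bx$, in particular holds on the restricted set appearing in \eqref{P3.9-1}, so Corollary~\ref{C3.8} applies and yields \eqref{T3.2-2}; specializing to $\al=1$ gives $\tau d(x,[f\le0])\le f_+^q(x)$ on a neighbourhood of $\bx$, which is precisely (i). For {\rm (iii) \folgt (i)}: analogously, condition (iii) implies the hypothesis \eqref{C3.14-1} of Corollary~\ref{C3.14}, whose conclusion \eqref{C3.14-2} is exactly (i).

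For {\rm (ii) \folgt (iii)} with $(1-q)^{1-q}\tau$ in place of $\tau$: I would invoke Proposition~\ref{P3.9}. Condition (ii) is nothing but its hypothesis \eqref{P3.9-1+} (indeed stronger, since no restriction on $x$ is imposed), and the conclusion of Proposition~\ref{P3.9}, after multiplying both sides by $(1-q)^{1-q}$, reads $q^q(1-q)^{1-q}d(x,[f\le0])^{q-1}d(0,\partial f(x))^q\ge(1-q)^{1-q}\tau$ on $B_{\delta/2}(\bx)\cap[f>0]$, i.e. condition (iii) with the constant $\tau$ replaced by $(1-q)^{1-q}\tau$. Finally, for the case $q=1$ I would simply substitute $q=1$ together with the convention $0^0=1$: condition (ii) collapses to $d(0,\partial f(x))\ge\tau$ (since $f^{0}(x)=1$ for $x\in[f>0]$), and condition (iii) collapses to the same inequality $d(0,\partial f(x))\ge\tau$ (since $q^q(1-q)^{1-q}=0^0=1$ and $d(x,[f\le0])^{q-1}=d(x,[f\le0])^0=1$), so the two coincide.

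The only real work is bookkeeping rather than analysis: I must verify that the unconditional ``for all $x$ near $\bx$'' form of (ii) and (iii) does imply the restricted hypotheses of the cited results, and that the successive neighbourhood contractions are immaterial once every statement is phrased locally. I expect no genuine obstacle, since all the substantive estimates -- the application of the Ekeland variational principle and the subdifferential sum rules -- were already carried out in Lemma~\ref{L1} and its corollaries.
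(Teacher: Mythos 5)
Your proof coincides with the paper's own argument: the paper likewise obtains (ii) $\Rightarrow$ (i) from Corollary~\ref{C3.8}, (iii) $\Rightarrow$ (i) from Corollary~\ref{C3.14}, (ii) $\Rightarrow$ (iii) from Proposition~\ref{P3.9}, and treats the case $q=1$ as an obvious substitution. The one bookkeeping point you flag is real but harmless: the conclusion of Proposition~\ref{P3.9} as stated carries the restriction $q^qf^q(x)<\tau\|x-\bx\|$, so to obtain the unconditional form of (iii) one should note that the chain of inequalities in its proof uses only the inequality in (ii) and the error bound (i) at the given point $x$, both of which hold at every $x\in[f>0]$ near $\bx$ in your setting.
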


\if{
\begin{proof}
Implication (ii) \folgt (i) follows from Corollary~\ref{C3.8}.
Proposition~\ref{P3.9} yields implication (ii) \folgt (iii).
The last assertion is obvious.
\qed\end{proof}
}\fi

\begin{corollary}\label{C3.22}
Suppose $X$ is a Banach space, $f:X\rightarrow\mathbb{R}\cup\{+\infty\}$ is lower semicontinuous, $\bar x\in[f\leq0]$ and $q\in(0,1]$.
{Let either $X$ is Asplund or $f$ is convex.}
$f$ admits a $q$-order local error bound at $\bar x$ if one of the following
%equivalent
conditions is satisfied:
\begin{enumerate}
\item
$\liminf\limits_{x\to\bx,\,f(x)\downarrow0} f^{q-1}(x)d(0,\partial f(x))>0$;
\item
$\liminf\limits_{x\to\bx,\,f(x)\downarrow0} d(x,[f\leq0])^{q-1}d(0,\partial f(x))^q>0$.
\end{enumerate}
\end{corollary}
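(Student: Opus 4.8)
The plan is to observe that each of the two hypotheses is precisely the positivity of one of the two lower estimates for the error bound modulus $\Erq f(\bx)$ that were read off from Corollaries~\ref{C3.8} and \ref{C3.14}. Since, by the definition \eqref{Er} and the discussion following it, the function $f$ admits a $q$-order local error bound at $\bx$ if and only if $\Erq f(\bx)>0$, it suffices to show that each of conditions (i) and (ii) forces $\Erq f(\bx)>0$.

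For condition (i), I would first match the $\liminf$ in the statement to definition \eqref{Er1}. Writing $f^{q-1}(x)d(0,\partial f(x))=d(0,\partial f(x))/f^{1-q}(x)$, that definition reads $\underline{\Erq}f(\bx)=q\liminf_{x\to\bx,\,f(x)\downarrow0} f^{q-1}(x)d(0,\partial f(x))$. As $q>0$, condition (i) is therefore equivalent to $\underline{\Erq}f(\bx)>0$; and since Corollary~\ref{C3.8} guarantees that $\underline{\Erq}f(\bx)$ is a lower estimate for $\Erq f(\bx)$, we obtain $\Erq f(\bx)\ge\underline{\Erq}f(\bx)>0$, as required.

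For condition (ii), I would argue analogously through definition \eqref{Er2}. Rewriting $d(x,[f\le0])^{q-1}d(0,\partial f(x))^q=d(0,\partial f(x))^q/d(x,[f\le0])^{1-q}$ shows that the $\liminf$ in (ii) is exactly the one appearing in \eqref{Er2}, so that $\underline{\Erq}'f(\bx)$ equals the constant $q^q(1-q)^{1-q}$ times that $\liminf$. The one point deserving care is this prefactor: under the convention $0^0=1$ it is strictly positive for every $q\in(0,1]$ (at $q=1$ it equals $1\cdot0^0=1$), so condition (ii) is equivalent to $\underline{\Erq}'f(\bx)>0$. Invoking the lower estimate furnished by Corollary~\ref{C3.14} then gives $\Erq f(\bx)\ge\underline{\Erq}'f(\bx)>0$.

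In both cases $\Erq f(\bx)>0$, and unwinding the definition \eqref{Er} produces constants $\tau,\delta>0$ for which \eqref{EB} holds, i.e. the claimed $q$-order local error bound. I do not expect any substantial obstacle: the whole argument is the bookkeeping that identifies the two $\liminf$ expressions in the hypotheses with the defining formulas \eqref{Er1} and \eqref{Er2}, the only minor subtlety being to keep the multiplicative constants $q$ and $q^q(1-q)^{1-q}$ strictly positive, which is exactly why the assumptions $q\in(0,1]$ and $0^0=1$ are in force.
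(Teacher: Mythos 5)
Your proof is correct and follows essentially the same route as the paper's own (implicit) derivation: the paper states this corollary without proof as the qualitative counterpart of Theorem~\ref{T3.21}, and the intended argument is exactly yours, namely identifying condition (i) with positivity of the lower estimate \eqref{Er1} (justified by Corollary~\ref{C3.8}) and condition (ii) with positivity of \eqref{Er2} (justified by Corollary~\ref{C3.14}), then using that $\Erq f(\bx)>0$ is equivalent to the existence of a $q$-order local error bound. Your care with the positive prefactors $q$ and $q^q(1-q)^{1-q}$ (under the convention $0^0=1$) is precisely the bookkeeping the paper leaves to the reader.
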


The last inequality in \eqref{C3.12-1} involving the $q$th power of the function $f$ can sometimes be replaced by a similar inequality involving the function $f$ itself.

\begin{proposition}\label{P3.16}
Suppose $X$ is  a Banach space, $f:X\rightarrow\mathbb{R}\cup\{+\infty\}$ is lower semicontinuous and $\bar x\in[f\leq0]$.
Let $\tau>0$, $\delta\in(0,\infty]$, $\beta>0$ and $q\in(0,1)$.
If either $X$ is Asplund or $f$ is convex, and
\begin{multline}\label{P3.16-1}
d(x,[f\leq0])^{q-1}d(0,\partial f(x))^q\ge\tau
\quad\mbox{for all}\quad
x\in B_\delta(\bar x)\cap[f>0]\\
\mbox{with}\;\;
f(x)<\beta \|x-\bar x\|,
\end{multline}
then, with $r:=\min\left\{\delta, \beta^{\frac{q}{1-q}}\tau^{-\frac{1}{1-q}}\right\}$,
\begin{gather}\label{P3.16-2}
\alpha^q(1-\alpha)^{1-q}\tau d(x,[f\leq0])\leq f_+^q(x)
\quad\mbox{for all}\quad
\alpha\in(0,1)
\quad\mbox{and}\quad
x\in B_{\frac{r}{1+\alpha}}(\bar x).
\end{gather}
\end{proposition}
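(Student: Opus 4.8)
The plan is to deduce Proposition~\ref{P3.16} from Theorem~\ref{T3.3} (or equivalently Corollary~\ref{C3.12}) by showing that the hypothesis \eqref{P3.16-1}, which restricts attention to points satisfying the linear bound $f(x)<\beta\|x-\bx\|$, already implies the hypothesis \eqref{C3.12-1} of Corollary~\ref{C3.12}, which restricts to points satisfying $f^q(x)<\tau\|x-\bx\|$, provided we shrink the neighbourhood from radius $\delta$ to radius $r:=\min\{\delta,\beta^{q/(1-q)}\tau^{-1/(1-q)}\}$. The key observation is purely pointwise: if $x\in B_r(\bx)\cap[f>0]$ satisfies $f^q(x)<\tau\|x-\bx\|$, I want to show it also satisfies $f(x)<\beta\|x-\bx\|$, so that the assumed inequality $d(x,[f\le0])^{q-1}d(0,\partial f(x))^q\ge\tau$ is available at $x$.

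First I would carry out this pointwise comparison. Fix $x\in B_r(\bx)\cap[f>0]$ with $f^q(x)<\tau\|x-\bx\|$. Writing $t:=\|x-\bx\|<r$, the assumption gives $f(x)<(\tau t)^{1/q}=\tau^{1/q}t^{1/q}$. To conclude $f(x)<\beta t$, it suffices that $\tau^{1/q}t^{1/q}\le\beta t$, i.e. $\tau^{1/q}t^{1/q-1}\le\beta$, i.e. $t^{(1-q)/q}\le\beta\tau^{-1/q}$, i.e. $t\le\big(\beta\tau^{-1/q}\big)^{q/(1-q)}=\beta^{q/(1-q)}\tau^{-1/(1-q)}$. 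This last bound is exactly the second term defining $r$, and since $t<r\le\beta^{q/(1-q)}\tau^{-1/(1-q)}$ the inequality holds (I would double-check the boundary/strictness by noting $t<r$ gives strict inequality throughout, with the convention handled since $q\in(0,1)$ keeps the exponent $(1-q)/q$ positive). Hence every such $x$ satisfies $f(x)<\beta t=\beta\|x-\bx\|$.

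Having established the pointwise implication, I would assemble the argument. For $x\in B_r(\bx)\cap[f>0]$ with $f^q(x)<\tau\|x-\bx\|$, the step above shows $f(x)<\beta\|x-\bx\|$; since also $x\in B_\delta(\bx)$ (because $r\le\delta$), hypothesis \eqref{P3.16-1} applies and yields $d(x,[f\le0])^{q-1}d(0,\partial f(x))^q\ge\tau$. This is precisely condition \eqref{C3.12-1} of Corollary~\ref{C3.12} with $\delta$ replaced by $r$. Invoking Corollary~\ref{C3.12} (with $r$ in place of $\delta$) then delivers conclusion \eqref{T3.3-2} over the ball $B_{r/(1+\alpha)}(\bx)$, which is exactly \eqref{P3.16-2}.

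The only real subtlety — and the step I expect to require the most care — is getting the exponent algebra in the definition of $r$ exactly right and verifying the direction of the inequality, since one is converting between a bound on $f^q$ and a bound on $f$ across the nonlinear exponent $q\in(0,1)$; the monotonicity of $t\mapsto t^{(1-q)/q}$ (valid because $(1-q)/q>0$) is what makes the threshold $r$ sharp. Everything else is a direct citation of Corollary~\ref{C3.12}, so no further variational machinery (Ekeland principle or sum rules) is needed here — those are already packaged inside the earlier results.
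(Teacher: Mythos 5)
Your proof is correct and follows essentially the same route as the paper's own proof: both reduce the statement to Corollary~\ref{C3.12} (equivalently Theorem~\ref{T3.3}) via the pointwise observation that, for $x\in B_r(\bar x)\cap[f>0]$, the inequality $f^q(x)<\tau\|x-\bar x\|$ forces $f(x)<\beta\|x-\bar x\|$, with the threshold $r=\min\bigl\{\delta,\beta^{\frac{q}{1-q}}\tau^{-\frac{1}{1-q}}\bigr\}$ coming from exactly the exponent algebra you carry out. The paper compresses this into the single chain $f(x)<\tau^{\frac{1}{q}}\|x-\bar x\|^{\frac{1}{q}}<\tau^{\frac{1}{q}}r^{\frac{1}{q}-1}\|x-\bar x\|\le\beta\|x-\bar x\|$, which is your computation in a slightly different order.
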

\begin{proof}
If $x\in B_r(\bar x)\cap[f>0]$, then $x\in B_\delta(\bar x)\cap[f>0]$.
If, additionally, $f^q(x)<\tau
%d(x,[f\leq0])
{\|x-\bx\|}
$, then
\begin{gather*}%\label{C3.2-2}
f(x)<\tau^{\frac{1}{q}} \|x-\bar x\|^{\frac{1}{q}}<\tau^{\frac{1}{q}} r^{\frac{1}{q}-1}\|x-\bar x\|\le\be\|x-\bar x\|.
\end{gather*}
Hence, condition \eqref{P3.16-1} implies \eqref{C3.12-1}
with $r$ in place of $\delta$.
The statement follows from Theorem~\ref{T3.3}.
\qed\end{proof}

\begin{remark}%\label{R3.7}
The above proposition is formulated for the case $q<1$.
When $q=1$, a similar assertion is trivially true with $\beta\ge\tau$ (as a consequence of Theorem~\ref{T3.1}), but, as the next example shows,
%AK 8/09/17
%is unlikely to be true
fails
when $\beta<\tau$.
This example shows also that
%AK 5/06/18
%the statement of
\cite[Proposition~3.1]{YaoZhe16} fails when $p=0$.
\end{remark}

\begin{example}
Let $f(x)=|x|$ $(x\in\mathbb{R})$, $\bar x=0$, $\tau=2$, $\beta=\frac{1}{2}$ and $q=1$.
Then there are no $x\in[f>0]$ with $f(x)<\beta|x|$, i.e. condition \eqref{P3.16-1} is trivially satisfied with any $\delta>0$.
Similarly, $\tau|x|>f(x)$ for any $x\ne0$, i.e. condition \eqref{P3.16-2} fails with any $r>0$.
\qed\end{example}

\begin{remark}
\begin{enumerate}
\item
One can easily formulate a statement similar to Proposition~\ref{P3.12} for the error bounds statements in Proposition~\ref{P3.16} and Corollary~\ref{C3.10}.
In the latter case, the sharpest error bound estimate in \eqref{C3.10-2} corresponds to taking $\al=\frac{1}{p+1}$, where the maximum of $\alpha(1-\alpha)^{p}$ over $\alpha\in(0,1)$ is attained.
\item
The neighbourhood $B_{\frac{\delta}{1+\alpha}}(\bar x)$ in \eqref{T3.3-2}, \eqref{T3.3-3} and \eqref{C3.10-2}, and the neighbourhood $B_{\frac{\delta}{1+q}}(\bar x)$ in \eqref{T3.3-4} and \eqref{C3.14-2} can always be replaced by the smaller neighbourhood $B_{\frac{\delta}{2}}(\bar x)$, independent of $\al$ or $q$.
A similar simplification is possible also in \eqref{P3.16-2}.
\item
Conditions \eqref{T3.3-1} in Theorem~\ref{T3.3}, \eqref{C3.12-1} in Corollary~\ref{C3.12}, \eqref{C3.10-1} in Corollary~\ref{C3.10}, (iii) in Theorem~\ref{T3.21}, (ii) in Corollary~\ref{C3.22} and \eqref{P3.16-1} in Proposition~\ref{P3.16}, although sufficient for the corresponding H\"older error bound estimates,
%\eqref{T3.3-2}, \eqref{C3.10-2} and \eqref{P3.16-2}, respectively,
do not seem practical as they involve the unknown distance ${d(x,[f\leq0])}$, which error bounds are supposed to estimate.
This remark also applies to the next more general theorem.
Nevertheless, such conditions are in use in the literature; see \cite{ZheNg15,YaoZhe16,ZheZhu16}.
\end{enumerate}
\end{remark}

The next theorem combines the sufficient H\"older error bound conditions from Theorems~\ref{T3.2} and \ref{T3.3} in a single statement.
It is still a consequence of Lemma~\ref{L1}.

\begin{theorem}\label{T3.7}
Suppose $X$ is a Banach space, $f:X\rightarrow\mathbb{R}\cup\{+\infty\}$ is lower semicontinuous and $\bar x\in[f\leq0]$.
Let $\tau>0$, $\delta\in(0,\infty]$, $\la\in[0,1]$ and $q\in(0,1]$.
If either $X$ is Asplund or $f$ is convex, and
\begin{multline}\label{T3.7-1}
\left(\frac{\lambda} {d(x,[f\leq0])^{\frac{1}{q}-1}} +q(1-\lambda)f^{q-1}(x)\right)d(0,\partial f(x))\ge\tau
\qdtx{for all}
x\in B_\delta(\bar x)\cap[f>0]
\\
\mbox{with}\;\;
\frac{\lambda f(x)} {d(x,[f\leq0])^{\frac{1}{q}-1}} +(1-\lambda)f^q(x)<\tau d(x,[f\le0]),
\end{multline}
then
%$[f\leq0]\ne\es$ and
\begin{gather}\label{T3.7-2}
{\tau_\al d(x,[f\leq0])\leq f_+^q(x)}
\quad\mbox{for all}\quad
\alpha\in(0,1)
\quad\mbox{and}\quad
x\in B_{\frac{\delta}{1+\alpha}}(\bar x),
\end{gather}
where $\tau_\al>0$ is the unique solution for the equation
\begin{equation}\label{T3.7-3}
{\frac{\lambda \tau_\al^{\frac{1}{q}}} {(1-\alpha)^{\frac{1}{q}-1}}+(1-\lambda)\tau_\al} =\al\tau.
\end{equation}
\end{theorem}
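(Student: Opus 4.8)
The plan is to prove the contrapositive by a \emph{single} application of Lemma~\ref{L1} to a composite function that merges the two ingredients of \eqref{T3.7-1}: the distance-weighted term (as in Theorem~\ref{T3.3}) and the power-weighted term (as in Theorem~\ref{T3.2}) will be absorbed into one subdifferential estimate through the chain rule of Lemma~\ref{L2}.

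Suppose \eqref{T3.7-2} fails, so that $f_+^q(x)<\tau_\alpha d$ for some $\alpha\in(0,1)$ and some $x\in B_{\delta/(1+\alpha)}(\bar x)$, where $d:=d(x,[f\le0])$. Then $d>0$ and $f(x)>0$. Freezing this value $d$, I introduce the one-variable function
\[
\Psi(t):=\frac{\lambda t}{\big((1-\alpha)d\big)^{\frac1q-1}}+(1-\lambda)t^q,\qquad t\ge0,
\]
which is continuous, strictly increasing, and for $t>0$ differentiable with $\Psi'(t)=\frac{\lambda}{((1-\alpha)d)^{\frac1q-1}}+q(1-\lambda)t^{q-1}>0$. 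Put $F:=\Psi\circ f_+$; then $F$ is lower semicontinuous, $F\ge0$, $[F\le0]=[f\le0]$, and $F(x)=\Psi(f(x))>0$.

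Then I would apply Lemma~\ref{L1} to $F$ at $x$, with $\alpha$ unchanged and with $\tau$ as its constant. The decisive feature is that Lemma~\ref{L1} only tests points $u$ with $\|u-x\|<\alpha\,d(x,[F\le0])=\alpha d$; for every such $u$ one has $d_u:=d(u,[f\le0])>(1-\alpha)d$, hence $d_u^{\frac1q-1}\ge\big((1-\alpha)d\big)^{\frac1q-1}$ because $\tfrac1q-1\ge0$. For $u$ with $f(u)>0$ the chain rule (Lemma~\ref{L2}) gives $\partial F(u)=\Psi'(f(u))\partial f(u)$, and the preceding inequality yields
\[
F(u)\ \ge\ \frac{\lambda f(u)}{d_u^{\frac1q-1}}+(1-\lambda)f^q(u),\qquad d(0,\partial F(u))\ \ge\ \Big(\frac{\lambda}{d_u^{\frac1q-1}}+q(1-\lambda)f^{q-1}(u)\Big)d(0,\partial f(u)).
\]
Consequently, whenever $F(u)<\tau d_u$ the ``with'' condition in \eqref{T3.7-1} holds, so \eqref{T3.7-1} forces the right-hand side of the second inequality to be $\ge\tau$, whence $d(0,\partial F(u))\ge\tau$. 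This is precisely hypothesis \eqref{L1-1} (resp.\ \eqref{L1-1c}) of Lemma~\ref{L1} for $F$, so the lemma yields $\alpha\tau d\le F(x)=\Psi(f(x))$.

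It remains to reach the contradiction. Since $\Psi$ is increasing and $f(x)<(\tau_\alpha d)^{1/q}$,
\[
\Psi(f(x))<\Psi\big((\tau_\alpha d)^{1/q}\big)=d\Big(\frac{\lambda\tau_\alpha^{1/q}}{(1-\alpha)^{\frac1q-1}}+(1-\lambda)\tau_\alpha\Big)=\alpha\tau d,
\]
the last equality being the defining relation \eqref{T3.7-3}; this contradicts $\alpha\tau d\le\Psi(f(x))$. (That \eqref{T3.7-3} has a unique positive root $\tau_\alpha$ is clear, since its left-hand side increases continuously and strictly from $0$ to $+\infty$.) The main obstacle is conceptual rather than computational: one has to discover that the correct device is to freeze the distance at the value $(1-\alpha)d$ and compose, so that a single function $F$ simultaneously carries both terms while the localized subdifferential test of Lemma~\ref{L1} (only $u$ within $\alpha d$ of $x$) makes the frozen weight dominate the true weight $d_u^{1/q-1}$. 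A secondary difficulty is the convex case with $X$ not Asplund: since $t\mapsto t^q$ is concave for $q<1$, the composite $F$ need not be convex, so the convex branch of Lemma~\ref{L1} cannot be invoked for $F$ verbatim and must be dealt with as in the convex cases of Theorems~\ref{T3.2} and~\ref{T3.3}.
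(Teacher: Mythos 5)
Your proof is correct and follows essentially the same route as the paper's: the paper also freezes the distance at $(1-\alpha)d(x,[f\le0])$, forms the composite $g=\psi\circ f_+$ with exactly your $\Psi$, applies Lemma~\ref{L1} to it together with the chain rule of Lemma~\ref{L2}, and closes the argument through the defining equation \eqref{T3.7-3}; your ``direct'' verification of hypothesis \eqref{L1-1} is just the contrapositive of the paper's step of producing a point $u$ that violates \eqref{T3.7-1} (the one detail you leave implicit is the check $\|u-\bar x\|\le\|u-x\|+\|x-\bar x\|<(1+\alpha)\|x-\bar x\|<\delta$, which is needed before \eqref{T3.7-1} can be invoked at $u$). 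Regarding the convex non-Asplund case you flag: you are right that $F$ need not be convex, but the paper's own proof invokes Lemma~\ref{L1} on the very same composite without comment, so this is a (repairable, e.g.\ via concavity of $\Psi$, which turns minimality of $F+\hat\tau\|\cdot-\hat u\|$ into minimality of a genuinely convex function plus a norm term) subtlety of the published argument rather than a defect specific to your proposal --- note only that your pointer is imprecise, since Theorem~\ref{T3.3} actually avoids the issue by applying Lemma~\ref{L1} to $f$ itself with a rescaled constant, while Theorem~\ref{T3.2} shares it.
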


\begin{proof}
Observe that the function $t\mapsto\varphi(t):=\lambda (1-\alpha)^{1-\frac{1}{q}}t^{\frac{1}{q}}+(1-\lambda)t$ is continuous and strictly increasing on $\R_+$ and satisfies $\varphi(0)=0$ and $\lim_{t\to\infty}\varphi(t)=\infty$.
Hence, the equation \eqref{T3.7-3} has a solution for any $\al>0$ and $\tau>0$, which is unique.
Suppose that condition \eqref{T3.7-2} is not satisfied, i.e.
\begin{equation}\label{T3.7P0}
{f_+^q(x)< \tau_\al d(x,[f\leq0])}
\end{equation}
for some $\al\in(0,1)$ and
%some
$x\in B_{\frac{\delta}{1+\alpha}}(\bar x)$.
Then $d(x,[f\leq0])>0$, and consequently, $f_+(x)=f(x)>0$.
Consider a function $g:X\rightarrow\mathbb{R}\cup\{+\infty\}$ defined by
\begin{equation}\label{T3.7P1}
g(u):= \frac{\lambda f_+(u)} {\big((1-\alpha)d(x,[f\leq0])\big)^{\frac{1}{q}-1}} +(1-\lambda)f_+^q(u),\quad u\in X.
\end{equation}
It is obviously \lsc, $g(u)\ge0$ for all $u\in X$, $[g\le0]=[g=0]=[f\le0]$ and $g(x)>0$.
Observe that $g=\psi\circ f_+$, where $\psi(t):=\lambda \big((1-\alpha)d(x,[f\leq0])\big)^{1-\frac{1}{q}}t +(1-\lambda)t^q$, and $\psi:\R_+\to\R_+$ is strictly increasing and continuously differentiable on $(0,\infty)$.
Hence,
by \eqref{T3.7P0},
\begin{multline*}
g(x)=\psi(f(x))<\psi((\tau_\al d(x,[f\leq0]))^{\frac{1}{q}})\\
=\left(
{\frac{\lambda \tau_\al^{\frac{1}{q}}} {(1-\alpha)^{\frac{1}{q}-1}}+(1-\lambda)\tau_\al}
\right) d(x,[f\leq0])=\al\tau d(x,[g\leq0]).
\end{multline*}
Thus, $0<g(x)<\al\tau d(x,[g\leq0])$.
By Lemma~\ref{L1}, there exists a $u\in X$ such that
\begin{gather}\label{T3.7P2}
\|u-x\|<\alpha d(x,[f\leq0]),
\quad
g(u)<\tau d(u,[f\le0])
\qdtx{and}
d(0,\partial g(u))<\tau.
\end{gather}
{Hence, $f(u)>0$.}
The first inequality in \eqref{T3.7P2} immediately yields estimates \eqref{T3.3P1} and \eqref{T3.3P2}.
By \eqref{T3.7P1}
%, \eqref{T3.3P2}
and the second inequality in \eqref{T3.7P2}, we have
\begin{gather}\label{T3.7P3}
\frac{\lambda f(u)} {d(u,[f\leq0])^{\frac{1}{q}-1}} +(1-\lambda)f^q(u)\le g(u)<\tau d(u,[f\le0]).
\end{gather}
Applying Lemma~\ref{L2},
%to the function \eqref{T3.7P1},
we get
\begin{equation*}
\partial g(u)=\left(\frac{\lambda} {\big((1-\alpha)d(x,[f\leq0])\big)^{\frac{1}{q}-1}} +q(1-\lambda)f^{q-1}(u)\right) \partial f(u),
\end{equation*}
and consequently, by \eqref{T3.3P2} and the third inequality in \eqref{T3.7P2},
\begin{gather}\notag
\left(\frac{\lambda} {d(u,[f\leq0])^{\frac{1}{q}-1}} +q(1-\lambda)f^{q-1}(u)\right)d(0,\partial f(u))<d(0,\partial g(u))<\tau.
\end{gather}
In view of \eqref{T3.3P1} and \eqref{T3.7P3}, this contradicts \eqref{T3.7-1} and completes the proof.
\qed\end{proof}

\begin{remark}
When $\la=0$, Theorem~\ref{T3.7} reduces to Theorem~\ref{T3.2} except for the case $\al=1$ in \eqref{T3.2-2}.
When $\la=1$, Theorem~\ref{T3.7} reduces to Theorem~\ref{T3.3}.
When $q=1$, Theorem~\ref{T3.7} reduces to Theorem~\ref{T3.1} except for the case $\al=1$ in \eqref{T3.1-2}.
The case $\al=1$ in \eqref{T3.2-2} when $\la=0$ and in \eqref{T3.1-2} when $q=1$ is an immediate consequence of the case $\al\in(0,1)$; see the argument in the proof of Proposition~\ref{P3.12}.
\end{remark}

%8/06/18
{
The next statement is a simplified version of Theorem~\ref{T3.7}.
%with the the last inequality in \eqref{T3.7-1} dropped.
\begin{corollary}%\label{C3.8}
Suppose $X$ is a Banach space, $f:X\rightarrow\mathbb{R}\cup\{+\infty\}$ is lower semicontinuous and $\bar x\in[f\leq0]$.
Let $\tau>0$, $\delta\in(0,\infty]$, $\la\in[0,1]$ and $q\in(0,1]$.
If either $X$ is Asplund or $f$ is convex, and
\begin{multline*}%\label{T3.7-2}
\left(\frac{\lambda} {d(x,[f\leq0])^{\frac{1}{q}-1}} +q(1-\lambda)f^{q-1}(x)\right)d(0,\partial f(x))\ge\tau
\qdtx{for all}
x\in B_\delta(\bar x)\cap[f>0]
\\
{\mbox{with}\;\;
\frac{\lambda f(x)} {d(x,[f\leq0])^{\frac{1}{q}-1}} +(1-\lambda)f^q(x)<\tau\|x-\bx\|},
\end{multline*}
then
%$[f\leq0]\ne\es$ and
condition \eqref{T3.7-2} holds true.
\end{corollary}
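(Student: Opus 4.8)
The plan is to obtain this corollary as an immediate consequence of Theorem~\ref{T3.7}, by observing that its qualifying restriction on the points $x$ is \emph{weaker} (i.e. satisfied by more points) than the one appearing in \eqref{T3.7-1}. Hence the corollary's hypothesis is actually stronger than, and in particular contains, the hypothesis of the theorem, after which the conclusion \eqref{T3.7-2} transfers verbatim.

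The only ingredient I would use is that $\bx\in[f\le0]$, so that $d(x,[f\le0])\le\|x-\bx\|$ for every $x\in X$; multiplying by $\tau>0$ gives $\tau d(x,[f\le0])\le\tau\|x-\bx\|$. Writing $\phi(x):=\frac{\lambda f(x)}{d(x,[f\leq0])^{\frac{1}{q}-1}}+(1-\lambda)f^q(x)$ for the left-hand side of the qualifying inequality, this shows that every $x\in B_\delta(\bar x)\cap[f>0]$ with $\phi(x)<\tau d(x,[f\le0])$ also satisfies $\phi(x)<\tau\|x-\bx\|$. Thus the set of points singled out by the qualifying condition of Theorem~\ref{T3.7} is contained in the set singled out by the corollary's condition, i.e. $\{x\in B_\delta(\bar x)\cap[f>0]:\phi(x)<\tau d(x,[f\le0])\}\subseteq\{x\in B_\delta(\bar x)\cap[f>0]:\phi(x)<\tau\|x-\bx\|\}$.

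First I would record this inclusion. Since the corollary assumes that the subdifferential inequality
$$\left(\frac{\lambda}{d(x,[f\leq0])^{\frac{1}{q}-1}}+q(1-\lambda)f^{q-1}(x)\right)d(0,\partial f(x))\ge\tau$$
holds for \emph{all} $x$ in the larger set, it holds \emph{a fortiori} for all $x$ in the smaller set, which is exactly hypothesis \eqref{T3.7-1} of Theorem~\ref{T3.7}. Applying that theorem then yields \eqref{T3.7-2}, completing the argument. I do not anticipate any genuine obstacle here: the one point requiring care is the direction of the monotonicity, namely that relaxing the upper bound from $\tau d(x,[f\le0])$ to $\tau\|x-\bx\|$ \emph{enlarges} the admissible set of $x$ and therefore \emph{strengthens} the assumption, so the implication must run from the corollary to the theorem and not conversely. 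This mirrors exactly the passage from Theorem~\ref{T3.1} to Corollary~\ref{C3.5}, from Theorem~\ref{T3.2} to Corollary~\ref{C3.8}, and from Theorem~\ref{T3.3} to Corollary~\ref{C3.12}.
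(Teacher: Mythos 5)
Your proof is correct and is essentially the paper's own (implicit) argument: the corollary is deduced from Theorem~\ref{T3.7} by noting that, since $\bx\in[f\le0]$ gives $d(x,[f\leq0])\le\|x-\bx\|$, the qualifying condition with $\tau\|x-\bx\|$ admits more points than that with $\tau d(x,[f\leq0])$, so the corollary's hypothesis implies \eqref{T3.7-1} and the conclusion \eqref{T3.7-2} follows. This matches exactly how the paper treats the analogous passages from Theorem~\ref{T3.1} to Corollary~\ref{C3.5}, Theorem~\ref{T3.2} to Corollary~\ref{C3.8}, and Theorem~\ref{T3.3} to Corollary~\ref{C3.12}.
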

}
%AK7/03/18
\subsection{Convex case}

{In this subsection $X$ is a normed vector space and the function $f:X\rightarrow\mathbb{R}\cup\{+\infty\}$ is assumed convex.
The statement of Lemma~\ref{L1} can be partially reversed (at the reference point).
\begin{lemma}\label{L3}
Suppose $X$ is a normed vector space, $f:X\rightarrow\mathbb{R}\cup\{+\infty\}$ is convex, $x\in X$, $f(x)>0$ and $\tau>0$.
If
\begin{gather}\label{L3-1}
\tau d(x,[f\leq0])\leq f(x),
\end{gather}
then $d(0,\partial f(x))\ge\tau$.
\end{lemma}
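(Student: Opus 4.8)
The plan is to exploit convexity through a supporting hyperplane (subgradient) argument combined with the distance estimate \eqref{L3-1}. The statement asserts a partial converse to Lemma~\ref{L1} at the reference point itself: if the linear error bound inequality holds at $x$, then the subdifferential of $f$ at $x$ stays bounded away from zero by $\tau$. Since $f$ is convex and $f(x)>0$, the set $[f\le0]$ is closed and convex, and I would begin by fixing an arbitrary subgradient $x^*\in\partial f(x)$ (if $\partial f(x)=\es$ there is nothing to prove, as $d(0,\partial f(x))=+\infty$). The goal is then to show $\|x^*\|\ge\tau$, which gives $d(0,\partial f(x))=\inf_{x^*\in\partial f(x)}\|x^*\|\ge\tau$.

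First I would use the subgradient inequality $f(y)\ge f(x)+\langle x^*,y-x\rangle$ for all $y\in X$. Applying this to points $y\in[f\le0]$ gives $0\ge f(y)\ge f(x)+\langle x^*,y-x\rangle$, hence $\langle x^*,x-y\rangle\ge f(x)$ for every $y\in[f\le0]$. Next I would take the infimum over $y\in[f\le0]$: the left-hand side satisfies $\langle x^*,x-y\rangle\le\|x^*\|\,\|x-y\|$, and by taking $y$ ranging over $[f\le0]$ I can relate this to the distance. More precisely, rewriting, for every $y\in[f\le0]$ we have $f(x)\le\langle x^*,x-y\rangle\le\|x^*\|\,\|x-y\|$, so $f(x)\le\|x^*\|\,d(x,[f\le0])$ after passing to the infimum over $y$ on the right (using that $f(x)\le\|x^*\|\,\|x-y\|$ for each $y$ forces $f(x)\le\|x^*\|\,\inf_y\|x-y\|=\|x^*\|\,d(x,[f\le0])$).

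Combining this with hypothesis \eqref{L3-1}, namely $\tau d(x,[f\le0])\le f(x)$, yields $\tau d(x,[f\le0])\le f(x)\le\|x^*\|\,d(x,[f\le0])$. Since $f(x)>0$ and \eqref{L3-1} forces $d(x,[f\le0])>0$, I can cancel the strictly positive factor $d(x,[f\le0])$ to conclude $\|x^*\|\ge\tau$. As $x^*\in\partial f(x)$ was arbitrary, this gives $d(0,\partial f(x))\ge\tau$, as required.

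The main obstacle, though a minor one, is justifying the step where the pointwise bound $f(x)\le\|x^*\|\,\|x-y\|$ for all $y\in[f\le0]$ is upgraded to $f(x)\le\|x^*\|\,d(x,[f\le0])$; this is immediate because the bound holds for every individual $y$, so the constant $f(x)/\|x^*\|$ (when $x^*\neq0$) is a lower bound for $\|x-y\|$ over all $y\in[f\le0]$, hence a lower bound for the infimum. One must also handle the degenerate case $x^*=0$ separately: if $0\in\partial f(x)$, then $x$ is a global minimizer of the convex function $f$, forcing $f(y)\ge f(x)>0$ for all $y$, so $[f\le0]=\es$ and $d(x,[f\le0])=+\infty$, which contradicts $\tau d(x,[f\le0])\le f(x)<+\infty$; thus $x^*=0$ cannot occur under \eqref{L3-1}, and the argument is consistent.
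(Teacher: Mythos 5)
Your proof is correct and follows essentially the same route as the paper's: fix an arbitrary $x^*\in\partial f(x)$, apply the subgradient inequality at points of $[f\le 0]$ to obtain $\|x^*\|\,\|x-y\|\ge f(x)\ge \tau d(x,[f\le 0])$, pass to the infimum over $y\in[f\le 0]$, and cancel the positive distance. Your explicit handling of the degenerate cases ($\partial f(x)=\emptyset$ and $x^*=0$) only spells out what the paper's proof leaves implicit.
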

\begin{proof}
Let condition \eqref{L3-1} be satisfied and $x^*\in\sd f(x)$.
Then, for any $u\in[f\leq0]$, we have
$$
\|x^*\|\|u-x\|\ge-\ang{x^*,u-x}\ge f(x)-f(u)\ge f(x)\ge\tau d(x,[f\leq0]).
$$
Taking the infimum in the \LHS\ over all $u\in[f\leq0]$, we get $\|x^*\|\ge\tau$, which concludes the proof.
\qed\end{proof}
}

Combining Theorem~\ref{T3.1} and Lemma~\ref{L3},
%and taking into account Remark~\ref{R2}.3,
we can formulate the standard subdifferential linear error bound criterion for convex functions.
\begin{theorem}%\label{C6}
Suppose $X$ is a Banach space, $f:X\rightarrow\mathbb{R}\cup\{+\infty\}$ is convex lower semicontinuous, $\bar x\in[f\leq0]$ and $\tau>0$.
The following conditions are equivalent:
\begin{enumerate}
\item
$\tau d(x,[f\leq0])\leq f_+(x)$
for all $x$ near $\bx$;
\item
$d(0,\partial f(x))\ge\tau$
for all
$x\in[f>0]$ near $\bx$.
\end{enumerate}
\end{theorem}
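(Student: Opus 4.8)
The plan is to prove the equivalence of the two conditions by establishing each implication separately, leaning on the two results that have already been assembled for exactly this purpose. The statement concerns a convex \lsc\ function on a Banach space, a reference point $\bx\in[f\le0]$, and a fixed $\tau>0$; conditions (i) and (ii) are the primal (error bound) and dual (subdifferential) formulations of the same linear error bound property. First I would observe that the implication (ii)$\;\Rightarrow\;$(i) is essentially immediate from Theorem~\ref{T3.1}. Indeed, since $f$ is convex, the alternative hypothesis ``$f$ is convex'' in Theorem~\ref{T3.1} is in force, so I do not even need $X$ to be Asplund. Assuming (ii), there is a $\delta>0$ such that $d(0,\partial f(x))\ge\tau$ for all $x\in B_\delta(\bx)\cap[f>0]$; in particular, this holds for those $x$ additionally satisfying $f(x)<\tau d(x,[f\le0])$, which is exactly the hypothesis \eqref{T3.1-1}. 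Theorem~\ref{T3.1} then delivers \eqref{T3.1-2}, and taking $\al=1$ gives $\tau d(x,[f\le0])\le f_+(x)$ for all $x\in B_{\delta/2}(\bx)$, which is (i).

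Next I would prove the converse (i)$\;\Rightarrow\;$(ii) using Lemma~\ref{L3}, which was stated precisely to reverse the error bound inequality for convex functions pointwise at the reference point. Assuming (i), there is a neighbourhood of $\bx$ on which $\tau d(x,[f\le0])\le f_+(x)$. For any $x$ in this neighbourhood with $x\in[f>0]$, we have $f_+(x)=f(x)>0$, so the hypothesis \eqref{L3-1} of Lemma~\ref{L3} holds at this particular point $x$. Lemma~\ref{L3} then yields $d(0,\partial f(x))\ge\tau$ at that point, and since $x$ was an arbitrary point of $[f>0]$ near $\bx$, this is exactly (ii).

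I do not anticipate a serious obstacle here, since both directions reduce to invoking an already-proved result with the hypotheses arranged to match. The one point requiring a little care is bookkeeping the neighbourhoods: in (ii)$\;\Rightarrow\;$(i) the conclusion \eqref{T3.1-2} holds on the slightly shrunken ball $B_{\delta/(1+\al)}(\bx)$, so with $\al=1$ the error bound holds on $B_{\delta/2}(\bx)$ rather than all of $B_\delta(\bx)$ --- but this is harmless for a property stated ``for all $x$ near $\bx$''. Symmetrically, in (i)$\;\Rightarrow\;$(ii) the neighbourhood on which (ii) holds is simply the one given by (i). Thus the proof is a short two-line assembly: apply Theorem~\ref{T3.1} (convex branch, $\al=1$) for one direction and Lemma~\ref{L3} for the other.
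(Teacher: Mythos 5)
Your proof is correct and follows exactly the route the paper intends: the statement appears there with the one-line justification ``Combining Theorem~\ref{T3.1} and Lemma~\ref{L3}'', which is precisely your two implications --- the convex branch of Theorem~\ref{T3.1} with $\al=1$ for (ii)$\;\Rightarrow\;$(i), and Lemma~\ref{L3} applied pointwise on $[f>0]$ for (i)$\;\Rightarrow\;$(ii). Your neighbourhood bookkeeping (the harmless shrinkage to $B_{\delta/2}(\bx)$) is also the right observation.
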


%AK6/03/18
The convex case `reverse' linear error bound statement in Lemma~\ref{L3} can also be easily adjusted to the H\"older setting both in the `conventional' form as in Theorem~\ref{T3.2} and its modification as in Theorem~\ref{T3.3}.
It is easy to see that the conclusion of the next lemma is actually a combination of two different conditions.

\begin{lemma}\label{L4}
Suppose $X$ is a normed vector space, $f:X\rightarrow\mathbb{R}\cup\{+\infty\}$ is convex, $x\in X$ and $f(x)>0$.
Let $\tau>0$ and $q\in(0,1]$.
If
\begin{gather}\label{L4-1}
\tau d(x,[f\leq0])\leq f^q(x),
\end{gather}
then $d(0,\partial f(x)) \ge\max\left\{\tau f^{1-q}(x),\tau^{\frac{1}{q}} d(x,[f\leq0])^{\frac{1}{q}-1}\right\}$,\\
or equivalently, $\min\left\{f^{q-1}(x)d(0,\partial f(x)), d(x,[f\leq0])^{q-1}d(0,\partial f(x))^q\right\}\ge\tau$.
\end{lemma}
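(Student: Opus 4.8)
The plan is to isolate a single convex slope lower bound on $d(0,\partial f(x))$ and then feed the hypothesis \eqref{L4-1} into it in two different rearrangements to recover the two quantities inside the $\max$. First I would dispose of the degenerate cases. Inequality \eqref{L4-1} forces $[f\le0]\ne\es$, since otherwise $d(x,[f\le0])=+\infty$ while $f^q(x)<+\infty$, contradicting \eqref{L4-1}; and if $\partial f(x)=\es$ then $d(0,\partial f(x))=+\infty$ and every asserted inequality holds trivially, so I may assume $\partial f(x)\ne\es$.

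The core step reproduces the argument of Lemma~\ref{L3}. Fix $x^*\in\partial f(x)$. For every $u\in[f\le0]$, the subgradient inequality together with $f(u)\le0<f(x)$ gives
\[
\|x^*\|\,\|u-x\|\ge\ang{x^*,x-u}\ge f(x)-f(u)\ge f(x).
\]
Since $\|x^*\|$ does not depend on $u$, taking the infimum over $u\in[f\le0]$ yields $\|x^*\|\,d(x,[f\le0])\ge f(x)$, and taking the infimum over $x^*\in\partial f(x)$ gives the master estimate
\[
d(0,\partial f(x))\ge\frac{f(x)}{d(x,[f\le0])}.
\]
This is exactly the content of Lemma~\ref{L3} in sharpened ratio form.

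With this estimate in hand the two bounds follow by substituting \eqref{L4-1} in its two equivalent forms. Writing \eqref{L4-1} as $d(x,[f\le0])\le f^q(x)/\tau$ and inserting it into the denominator gives $d(0,\partial f(x))\ge f(x)\big/(f^q(x)/\tau)=\tau f^{1-q}(x)$. Writing \eqref{L4-1} instead as $f(x)\ge(\tau d(x,[f\le0]))^{1/q}$ and inserting it into the numerator gives $d(0,\partial f(x))\ge(\tau d(x,[f\le0]))^{1/q}\big/d(x,[f\le0])=\tau^{1/q}d(x,[f\le0])^{\frac{1}{q}-1}$. Taking the maximum of the two right-hand sides is precisely the asserted bound.

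The ``equivalently'' reformulation is routine algebra: multiplying $d(0,\partial f(x))\ge\tau f^{1-q}(x)$ by $f^{q-1}(x)>0$ gives $f^{q-1}(x)d(0,\partial f(x))\ge\tau$, while raising $d(0,\partial f(x))\ge\tau^{1/q}d(x,[f\le0])^{\frac{1}{q}-1}$ to the power $q$ gives $d(x,[f\le0])^{q-1}d(0,\partial f(x))^q\ge\tau$; a $\max$ of two expressions being dominated by $d(0,\partial f(x))$ is exactly the pair of these two inequalities, i.e. the stated $\min\ge\tau$. I do not expect a genuine obstacle here: everything rests on the one convex slope estimate, and the only care needed is in handling the degenerate cases ($[f\le0]=\es$, $\partial f(x)=\es$) and in keeping the exponents straight when $q\in(0,1]$.
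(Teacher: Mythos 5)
Your proposal is correct and follows essentially the same route as the paper: the paper proves Lemma~\ref{L4} by invoking Lemma~\ref{L3} twice, once with $\tau':=\tau f^{1-q}(x)$ and once with $\tau':=\tau^{\frac{1}{q}} d(x,[f\leq0])^{\frac{1}{q}-1}$, which is exactly your two substitutions into the ratio form $d(0,\partial f(x))\ge f(x)/d(x,[f\le0])$ of the same convex subgradient estimate. Your explicit treatment of the degenerate cases ($[f\le0]=\emptyset$, $\partial f(x)=\emptyset$) is a minor tidiness bonus, not a difference in method.
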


\begin{proof}
Condition \eqref{L4-1} can be rewritten as
$$\tau f^{1-q}(x) d(x,[f\leq0])\leq f(x).$$
Applying Lemma~\ref{L3} with $\tau':=\tau f^{1-q}(x)$ in place of $\tau$, we get $d(0,\partial f(x))\ge\tau f^{1-q}(x)$.
Similarly, rewriting condition \eqref{L4-1} as
$$\tau^{\frac{1}{q}} d(x,[f\leq0])^{\frac{1}{q}-1} d(x,[f\leq0])\leq f(x),$$
and applying Lemma~\ref{L3} with $\tau':=\tau^{\frac{1}{q}} d(x,[f\leq0])^{\frac{1}{q}-1}$ in place of $\tau$, we get
$d(0,\partial f(x))\ge\tau^{\frac{1}{q}} d(x,[f\leq0])^{\frac{1}{q}-1}$.
\qed\end{proof}

Combining
%8/06/18
Theorem~\ref{T3.21}
and Lemma~\ref{L4},
%and taking into account Remark~\ref{R2}.3,
we can formulate quantitative and qualitative subdifferential characterizations of H\"older error bounds for convex functions.

\begin{theorem}%\label{C6}
Suppose $X$ is a Banach space, $f:X\rightarrow\mathbb{R}\cup\{+\infty\}$ is convex lower semicontinuous and $\bar x\in[f\leq0]$.
Let $\tau>0$, $q\in(0,1]$,
and the convention $0^0=1$ be in force.
Consider
%the following
conditions
{{\rm (i), (ii)} and {\rm (iii)} in Theorem~{\rm \ref{T3.21}}.
Then
\begin{enumerate}
\item[\rm (a)]
{\rm (ii) \folgt (i)} and {\rm (i) \folgt (ii)} with $q\tau$ in place of $\tau$;
\item[\rm (b)]
{\rm (iii) \folgt (i)} and {\rm (i) \folgt (iii)} with $q^q(1-q)^{1-q}\tau$ in place of $\tau$.
\end{enumerate}
}
If $q=1$, then all the conditions are equivalent.
\end{theorem}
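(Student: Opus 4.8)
The plan is to obtain the two forward implications directly from Theorem~\ref{T3.21} and the two reverse implications from the convex ``reverse'' estimate in Lemma~\ref{L4}, applied pointwise. Since $f$ is convex, Theorem~\ref{T3.21} applies in the present Banach space setting and gives at once {\rm (ii) \folgt (i)} and {\rm (iii) \folgt (i)}, which establishes the first half of each of (a) and (b). These are genuinely one-directional consequences of the subdifferential sufficient conditions, so nothing beyond what Theorem~\ref{T3.21} already uses is needed here.

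For the reverse implications I would assume that (i) holds with constant $\tau$, i.e. $\tau d(x,[f\le0])\le f_+^q(x)$ for all $x$ in some fixed neighbourhood of $\bx$. Fix any such $x\in[f>0]$; then $f_+(x)=f(x)>0$ and the inequality becomes $\tau d(x,[f\le0])\le f^q(x)$, which is precisely hypothesis \eqref{L4-1} of Lemma~\ref{L4}. Applying that lemma yields simultaneously $f^{q-1}(x)d(0,\partial f(x))\ge\tau$ and $d(x,[f\le0])^{q-1}d(0,\partial f(x))^q\ge\tau$. Multiplying the first by $q$ gives $q f^{q-1}(x)d(0,\partial f(x))\ge q\tau$, which is exactly (ii) with $q\tau$ in place of $\tau$; multiplying the second by $q^q(1-q)^{1-q}$ gives (iii) with $q^q(1-q)^{1-q}\tau$ in place of $\tau$. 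As this holds at every $x\in[f>0]$ in the fixed neighbourhood, the qualitative ``near $\bx$'' quantifier is preserved, completing (a) and (b).

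For the final assertion I would set $q=1$. Then $q\tau=\tau$ and, using the convention $0^0=1$, one checks $q^q(1-q)^{1-q}\tau=1\cdot1\cdot\tau=\tau$, so each reverse implication now preserves the constant $\tau$ exactly. Moreover, when $q=1$ conditions (ii) and (iii) both reduce to $d(0,\partial f(x))\ge\tau$ and hence coincide, as already noted in Theorem~\ref{T3.21}. Combining the forward and reverse implications, all three conditions then become equivalent with the same constant $\tau$.

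The argument is essentially a bookkeeping of multiplicative constants, so I do not anticipate a substantive obstacle. The only point requiring care is matching the factors $q$ and $q^q(1-q)^{1-q}$ correctly when translating between (i) and (ii)/(iii), together with verifying that in the boundary case $q=1$ the factor in (b) collapses to $1$ through the $0^0=1$ convention rather than producing an indeterminate form.
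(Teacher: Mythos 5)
Your proposal is correct and follows essentially the same route as the paper: the forward implications (ii) \folgt (i) and (iii) \folgt (i) are quoted from Theorem~\ref{T3.21} (valid here since $f$ is convex), while the reverse implications come from applying Lemma~\ref{L4} pointwise at each $x\in[f>0]$ near $\bx$, whose two-part conclusion delivers exactly the constants $q\tau$ and $q^q(1-q)^{1-q}\tau$, and for $q=1$ both factors collapse to $1$ (via $0^0=1$) so that all conditions coincide. The bookkeeping of constants and the observation that Lemma~\ref{L4} needs no neighbourhood structure are handled correctly, so there is nothing to add.
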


\if{
\begin{proof}
In addition to the implications coming from Theorem~\ref{T3.21}, we have an obvious implication
(vi) \folgt (v) and implications (i) \folgt (ii) and (i) \folgt (v) coming from Lemma~\ref{L4}.
If $q=1$, then conditions (ii)--(vii) coincide.
\qed\end{proof}
}\fi

\begin{corollary}\label{T3.25}
Suppose $X$ is a Banach space, $f:X\rightarrow\mathbb{R}\cup\{+\infty\}$ is convex lower semicontinuous, $\bar x\in[f\leq0]$ and $q\in(0,1]$.
The following conditions are equivalent:
\begin{enumerate}
\item
$\tau d(x,[f\leq0])\leq f_+^q(x)$
for some $\tau>0$ and all $x$ near $\bx$;
\item
$\liminf\limits_{x\to\bx,\,f(x)>0} f^{q-1}(x)d(0,\partial f(x))>0$;
\item
$\liminf\limits_{x\to\bx,\,f(x)>0} d(x,[f\leq0])^{q-1}d(0,\partial f(x))^q>0$.
\end{enumerate}
\end{corollary}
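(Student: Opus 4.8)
The plan is to deduce the corollary directly from the theorem immediately preceding it, which (for convex lower semicontinuous $f$) records the implications among conditions (i), (ii) and (iii) of Theorem~\ref{T3.21}. The bridge between the two statements is the elementary observation that a condition of the form $\liminf_{x\to\bx,\,f(x)>0}g(x)>0$ holds \emph{if and only if} there exist $\tau>0$ and $\delta>0$ with $g(x)\ge\tau$ for all $x\in B_\delta(\bx)\cap[f>0]$; that is, the qualitative conditions in the corollary are exactly the existentially-$\tau$-quantified versions of the pointwise conditions appearing in Theorem~\ref{T3.21}.

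First I would match up the conditions. Condition (i) of the corollary is verbatim condition (i) of Theorem~\ref{T3.21}, read as ``for some $\tau>0$''. For condition (ii), since the factor $q>0$ is a fixed positive constant, $\liminf_{x\to\bx,\,f(x)>0}f^{q-1}(x)d(0,\partial f(x))>0$ holds iff $qf^{q-1}(x)d(0,\partial f(x))\ge\tau$ for some $\tau>0$ and all $x$ near $\bx$ in $[f>0]$, i.e. iff condition (ii) of Theorem~\ref{T3.21} holds for some $\tau$. Likewise, because $q^q(1-q)^{1-q}>0$ for every $q\in(0,1]$ under the convention $0^0=1$, condition (iii) of the corollary is the ``for some $\tau$'' version of condition (iii) of Theorem~\ref{T3.21}.

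With these identifications in place, I would simply invoke the preceding theorem. Part (a) there gives (ii) $\folgt$ (i) and (i) $\folgt$ (ii) (the latter with $q\tau$ in place of $\tau$), so in the ``for some $\tau$'' reading the corollary's (i) and (ii) are equivalent; part (b) gives (iii) $\folgt$ (i) and (i) $\folgt$ (iii), so (i) and (iii) are equivalent as well. Chaining these yields the equivalence of (i), (ii) and (iii).

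The step needing the most care -- though it remains routine -- is the quantifier bookkeeping: one must check that replacing $\tau$ by $q\tau$ or by $q^q(1-q)^{1-q}\tau$ when passing between the conditions never destroys positivity of the relevant $\liminf$. This is immediate since these multiplicative constants are strictly positive and independent of $x$, so an inequality valid for some $\tau>0$ remains valid (with a rescaled but still positive constant) after multiplication. When $q=1$ both constants equal $1$ and conditions (ii) and (iii) coincide, as already noted in Theorem~\ref{T3.21}, so in that case the equivalence is even more immediate.
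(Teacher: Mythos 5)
Your proposal is correct and matches the paper's own (implicit) argument: Corollary~\ref{T3.25} is stated immediately after the unnumbered convex-case theorem precisely so that it follows by reading conditions (i)--(iii) of Theorem~\ref{T3.21} existentially in $\tau$ and invoking parts (a) and (b) of that theorem, exactly as you do. Your quantifier bookkeeping -- noting that the positive constants $q$ and $q^q(1-q)^{1-q}$ (with $0^0=1$) preserve positivity of the liminfs in both directions -- is the only content needed, and it is handled correctly.
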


\section{Applications to convex semi-infinite optimization}\label{S5}

In this section, we mainly consider the following
convex optimization problem
\begin{equation}
\begin{aligned} P(c, b):\quad {\rm minimize}\quad \;\;&f(x)+\langle c,
x\rangle\\ {\rm subject\; to}\quad &g_t(x)\leq b_t, \;t\in T, \end{aligned}
\label{P}
\end{equation}%
where $c,\ x\in \mathbb{R}^{n}$, $T$ is a compact set in a metric space $Z$
such that\textbf{\ }$T\varsubsetneqq Z$, $f:\mathbb{R}^{n}\rightarrow
\mathbb{R}$ and $g_{t}:\mathbb{R}^{n}\rightarrow \mathbb{R},\ t\in T$, are
given convex functions
%AK20/04/18
%in such a way
such
that $(t,x)\mapsto g_{t}(x)$ is
continuous on $T\times \mathbb{R}^{n}$, and $b\in \mathcal{C}(T,\mathbb{R})$%
, i.e., $T\ni t\mapsto b_{t}\in \mathbb{R}$ is continuous on $T$. In this
setting, the pair $(c,b)\in \mathbb{R}^{n}\times \mathcal{C}(T,\mathbb{R})$
is regarded as the parameter to be perturbed. The parameter space $\mathbb{R}%
^{n}\times \mathcal{C}(T,\mathbb{R})$ is endowed with the norm
\begin{equation}
\Vert (c,b)\Vert :=\max \{\Vert c\Vert ,\Vert b\Vert _{\infty }\},
\label{1.4}
\end{equation}%
where $\mathbb{R}^{n}$ is equipped with the Euclidean norm $\Vert \cdot
\Vert $ and $\Vert b\Vert _{\infty }:=\max_{t\in T}|b_{t}|$.

Our aim here is to analyze the \emph{solution mapping} (also called \emph{%
argmin mapping}) of problem \eqref{P}:
\begin{equation*}
\mathcal{S}:(c,b)\mapsto \{x\in \mathbb{R}^{n}\mid x\;\mathrm{solves}%
\;P(c,b)\}\;\mathrm{with}\;(c,b)\in \mathbb{R}^{n}\times \mathcal{C}(T,%
\mathbb{R}).
\end{equation*}%
In the special case that $c$ is fixed, $\mathcal{S}$ reduces to the partial
solution mapping $\mathcal{S}_{c}:\mathcal{C}(T,\mathbb{R})\rightrightarrows
\mathbb{R}^{n}$ given by
\begin{equation*}
\mathcal{S}_{c}(b)=\mathcal{S}(c,b).
\end{equation*}%
Associated with the parameterized problem $P(c,b)$, we denote by $\mathcal{F}
$ the feasible set mapping, which is given by
\begin{equation*}
\mathcal{F}(b):=\{x\in \mathbb{R}^{n}\mid g_{t}(x)\leq b_{t},t\in T\}.
\end{equation*}%
The set of \emph{active indices} at $x\in \mathcal{F}(b)$ is the set $%
T_{b}(x)$ defined by
\begin{equation*}
T_{b}(x):=\{t\in T\mid g_{t}(x)=b_{t}\}.
\end{equation*}

We say that the problem $P(c,b)$ satisfies the \emph{Slater constraint
qualification} (hereinafter called the \emph{Slater condition})\emph{\ }if
there exists $\hat{x}\in \mathbb{R}^{n}$ such that $g_{t}(\hat{x})<b_{t}$
for all $t\in T$.
The following well-known result (see \cite[Theorems 7.8 and 7.9]{GobLop98}) plays a key role in our
analysis.

\begin{proposition}
Let $(\bar{c},\bar{b})\in \mathbb{R}^{n}\times \mathcal{C}(T,\mathbb{R})$
and assume that $P(\bar{c},\bar{b})$ satisfies the Slater condition. Then $%
\bar{x}\in \mathcal{S}(\bar{c},\bar{b})$ if and only if the \emph{%
Karush-Kuhn-Tucker (KKT)} conditions hold, i.e.,
\begin{equation*}
\bar{x}\in \mathcal{F}(\bar{b})\quad \text{and}\quad -(\partial f(\bar{x})+%
\bar{c})\bigcap \left( \mathrm{cone}\Big(\bigcup_{t\in T_{\bar{b}}(\bar{x}%
)}\partial g_{t}(\overline{x})\Big)\right) \neq \emptyset .
\end{equation*}
\end{proposition}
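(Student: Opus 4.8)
The plan is to treat $P(\bar c,\bar b)$ as an unconstrained convex minimization problem by absorbing the constraints into an indicator function, and then to peel off the objective and the feasibility term with the convex sum rule, reducing everything to a single computation of the normal cone to the feasible set.

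First I would set $h(x):=f(x)+\ang{\bar c,x}$, a finite convex (hence continuous) function on $\R^n$, and let $\iota_{\mathcal{F}(\bar b)}$ be the indicator of the feasible set. The set $\mathcal{F}(\bar b)$ is closed and convex (an intersection of the closed convex sublevel sets $[g_t\le\bar b_t]$) and nonempty by the Slater condition. Since $P(\bar c,\bar b)$ is convex, $\bar x\in\mathcal{S}(\bar c,\bar b)$ exactly when $\bar x$ minimizes $h+\iota_{\mathcal{F}(\bar b)}$ over $\R^n$; by Fermat's rule (Lemma~\ref{L2.3} for necessity, and the subgradient inequality for sufficiency in the convex case) this is equivalent to $0\in\partial(h+\iota_{\mathcal{F}(\bar b)})(\bar x)$. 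Both implications of the proposition are thus obtained simultaneously, with no separate argument for necessity versus sufficiency.

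Next, because $h$ is convex and continuous on all of $\R^n$, hence continuous at a point of $\dom\iota_{\mathcal{F}(\bar b)}=\mathcal{F}(\bar b)\ne\es$, the convex sum rule (Lemma~\ref{SR}(ii)) applies and gives
\begin{equation*}
\partial\big(h+\iota_{\mathcal{F}(\bar b)}\big)(\bar x)=\big(\partial f(\bar x)+\bar c\big)+N_{\mathcal{F}(\bar b)}(\bar x),
\end{equation*}
where $N_{\mathcal{F}(\bar b)}(\bar x):=\partial\iota_{\mathcal{F}(\bar b)}(\bar x)$ is the normal cone. The optimality condition then reads $-(\partial f(\bar x)+\bar c)\cap N_{\mathcal{F}(\bar b)}(\bar x)\ne\es$, which is precisely the asserted KKT condition once the normal cone is identified with $\cone\big(\bigcup_{t\in T_{\bar b}(\bar x)}\partial g_t(\bar x)\big)$.

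The crux, and the step I expect to be the main obstacle, is this normal-cone formula. I would introduce the slack function $G(x):=\max_{t\in T}\big(g_t(x)-\bar b_t\big)$, which by compactness of $T$ and joint continuity of $(t,x)\mapsto g_t(x)$ is a finite convex function with $\mathcal{F}(\bar b)=[G\le0]$, and for which the Slater point $\hat x$ yields $G(\hat x)<0$. If $G(\bar x)<0$ then $\bar x\in\Int\mathcal{F}(\bar b)$, $T_{\bar b}(\bar x)=\es$, and both $N_{\mathcal{F}(\bar b)}(\bar x)$ and the claimed cone reduce to $\{0\}$; so assume $G(\bar x)=0$. Now two classical facts combine: (a) for the single convex inequality $[G\le0]$ with a Slater point, $N_{[G\le0]}(\bar x)=\cone\,\partial G(\bar x)$; and (b) the Valadier/Ioffe--Tikhomirov formula for the subdifferential of a maximum over a compact index set gives $\partial G(\bar x)=\conv\big(\bigcup_{t\in T_{\bar b}(\bar x)}\partial g_t(\bar x)\big)$, the active indices being exactly those attaining the maximum, i.e.\ satisfying $g_t(\bar x)=\bar b_t$. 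Since $\cone(\conv A)=\cone A$, (a) and (b) yield the formula. The delicate points here are that the compactness of $T$ together with continuity is exactly what makes $\conv\big(\bigcup_t\partial g_t(\bar x)\big)$ already closed, so that no extra closure operation is needed, and that identity (a) genuinely requires the Slater condition (without it one obtains only an inclusion, or must adjoin asymptotic directions). I would therefore verify these compactness and continuity hypotheses carefully, since they are precisely what the standing assumptions on the $g_t$ and the Slater condition are designed to supply.
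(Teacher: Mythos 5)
Your proof is correct, and it is worth pointing out first that the paper does not actually prove this proposition: it is quoted as a known result with a citation to \cite[Theorems 7.8 and 7.9]{GobLop98}, so there is no in-paper argument to compare against. Your self-contained derivation is sound. The reduction to $0\in\partial\bigl(h+\iota_{\mathcal{F}(\bar b)}\bigr)(\bar x)$ and the application of the convex sum rule (Lemma~\ref{SR}(ii)) are legitimate, since $h$ is finite convex on $\R^n$, hence continuous, and $\mathcal{F}(\bar b)\neq\es$ by the Slater condition; feasibility of $\bar x$ is correctly encoded in the requirement $\bar x\in\dom\bigl(h+\iota_{\mathcal{F}(\bar b)}\bigr)$. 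Your two ``classical facts'' are genuinely classical and correctly invoked: (a) is the description of the normal cone to a sublevel set under Slater (\cite[Corollary~23.7.1]{Roc70}), where Slater guarantees $0\notin\partial G(\bar x)$ and hence that $\R_+\partial G(\bar x)$ is closed (a compact convex set not containing the origin generates a closed cone); (b) is the Valadier-type maximum formula, which is precisely the formula \eqref{marco3} that the paper itself quotes from \cite[VI, Theorem~4.4.2]{HirLem93} a few lines later, so your machinery is exactly the paper's own toolkit. You also correctly dispose of the interior case $G(\bar x)<0$ via the convention $\mathrm{cone}(\es)=\{0_n\}$, and your identification of the maximizing indices of $G$ with the active set $T_{\bar b}(\bar x)$ is valid precisely because $G(\bar x)=0$ in the remaining case.

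The instructive contrast is with how the paper handles the same circle of ideas immediately after stating the proposition. Rather than separating the objective from the constraints with an indicator function and a normal cone, the paper absorbs the objective into the supremum function $\bar f$ as an artificial index $t_0$, applies \eqref{marco3} to $\bar f$ at $\bar x$, and then argues (around \eqref{marco4}) that the Slater condition forces $t_0$ to appear with a strictly positive multiplier, which after renormalization yields the KKT representation. That route avoids the normal-cone identity (a) entirely, at the price of the positive-multiplier argument (if $t_0$ carried zero weight, $\bar x$ would globally minimize $\sup_{t\in T}\{g_t(\cdot)-\bar b_t\}$, contradicting the Slater point). Your route needs (a) but keeps objective and constraints cleanly separated and delivers both implications of the equivalence in one stroke. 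Both are standard and complete; neither has a gap.
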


Here $\mathrm{cone}(X)$ represents the conical convex hull of $X$, and we
assume that $\mathrm{cone}(X)$ always contain the zero-vector $0_{n}$, in
particular $\mathrm{cone}(\emptyset )=\{0_{n}\}.$

In this section we provide a characterization for H\"{o}lder calmness of $%
\mathcal{S}$ at $((\bar{c},\bar{b}),\bar{x})$. To this aim, we use the
following \emph{level set mapping} $\mathcal{L}:\mathbb{R}\times C(T,\mathbb{%
R})\rightrightarrows \mathbb{R}^{n}$ given by
\begin{equation*}
\mathcal{L}(\alpha ,b):=\{x\in \mathbb{R}^{n}\mid \;f(x)+\langle \bar{c}%
,x\rangle \leq \alpha ;\;g_{t}(x)\leq b_{t},t\in T\}
\end{equation*}%
and the \emph{supremum function} $\bar{f}:\mathbb{R}^{n}\rightarrow \mathbb{R%
}$ defined as
\begin{align}
\bar{f}(x):=&\sup \{f(x)-f(\bar{x})+\langle \bar{c},x-\bar{x}\rangle
;\;g_{t}(x)-\bar{b}_{t},\;t\in T\}  \label{2.5} \\
=&\sup \{f(x)+\langle \bar{c},x\rangle -\left( f(\bar{x})+\langle \bar{c},%
\bar{x}\rangle \right) ;\;g_{t}(x)-\bar{b}_{t},\;t\in T\}.  \notag
\end{align}%
(See \cite[(11) and (12)]{CanKruLopParThe14} for the linear counterparts of $\mathcal{L}$
and $\bar{f}$.)

For a given $t_{0}\in Z\diagdown T,$ we define\textbf{\ }%
\begin{equation*}
\overline{T}:=T\cup \{t_{0}\},\ g_{t_{0}}(x):=f(x)+\langle \bar{c},x\rangle
\text{ and }\bar{b}_{t_{0}}:=f(\bar{x})+\langle \bar{c},\bar{x}\rangle .
\end{equation*}%
As $\overline{T}$ is a compact set (and $t_{0}$ is an isolated point in $%
\overline{T}$), the function $(t,x)\mapsto g_{t}(x)$ is continuous on $%
\overline{T}\times \mathbb{R}^{n}$, $b\in \mathcal{C}(\overline{T},\mathbb{R)%
}$ and, obviously,
\begin{equation*}
\bar{f}(x)=\sup \{g_{t}(x)-\bar{b}_{t},\;t\in \overline{T}\}.
\end{equation*}

For any $x\in \mathbb{R}^{n}$, we consider the extended active set
\begin{equation*}
\overline{T}(x):=\{t\in \overline{T}:\;g_{t}(x)-\bar{b}_{t}=\bar{f}(x)\}.
\end{equation*}%
The following well-known result is useful for us (e.g.
\cite[VI, Theorem 4.4.2]{HirLem93}).
\begin{equation}
\partial\bar{f}(x)=\mathrm{co}\Big(\mathop{\bigcup}\limits_{t\in\overline
{T}(x)}\partial g_{t}(x)\Big).\label{marco3}
\end{equation}
Observe that
\begin{equation*}
\partial g_{t_{0}}(x)=\partial f(x)+\bar{c}.
\end{equation*}%
Since $((\bar{c},\bar{b}),\bar{x})\in\mathrm{gph}%
(\mathcal{S})$,
\begin{equation}
\mathcal{S}(\bar{c},\bar{b})=\left[  \bar{f}=0\right]  =\left[  \bar{f}%
\leq0\right]  =\mathcal{L}(f(\bar{x})+\langle\bar{c},\bar{x}\rangle,\bar
{b}).\label{marco6}%
\end{equation}
Observe that $t_{0}\in\overline{T}(\bar{x}).$
Consequently $0_{n}\in \partial \overline{f}(\overline{x})$, and by (\ref%
{marco3})
\begin{equation*}
0_{n}=\sum\nolimits_{i=1}^{p}\lambda _{i}u^{i},
\end{equation*}%
with $u^{i}\in \partial g_{t_{i}}(\overline{x}),$ $\{t_{i},\
i=1,2,\ldots,p\}\subset \overline{T}(\overline{x}),$ $\lambda _{i}>0$ and $%
\sum\nolimits_{i=1}^{p}\lambda _{i}=1.$

If $P(\bar{c},\bar{b})$ satisfies the Slater condition,
$t_{0}$ must be one of the indices involved in the sum above, and we shall
write
\begin{equation}
0_{n}=\mu_{0}(u^{0}+\overline{c})+\sum\nolimits_{i=1}^{q}\mu_{i}%
u^{i},\label{marco4}%
\end{equation}
with $u^{0}\in\partial f(\overline{x}),$ $u^{i}\in\partial g_{t_{i}}%
(\overline{x}),$ $\{t_{i},\ i=1,2,\ldots,q\}\subset T_{\overline{b}}(\overline
{x}),$ $\mu_{0}>0,$ $\mu_{i}\geq0,\ i=1,2,\ldots,q,$ and $\sum\nolimits_{i=0}%
^{q}\mu_{i}=1.$ Otherwise $0_{n}\in\mathrm{co}\left(  \bigcup\nolimits_{t\in
T_{\overline{b}}(\overline{x})}\partial g_{t}(\overline{x})\right)  $ and
$\overline{x}$ would be a global minimum of the function
\[
\varphi(\cdot):=\sup\{g_{t}(\cdot)-\bar{b}_{t},\;t\in T\},
\]
giving rise to the contradiction $\varphi(\widehat{x})<0=\varphi(\overline
{x}).$ Observe that it may happen that $\mu_{0}=1$ and the sum in
(\ref{marco4}) vanishes (this is the case if $T_{\overline{b}}(\overline
{x})=\emptyset$).

The following lemma provides a uniform boundedness result which is needed
later. It constitutes a convex counterpart of \cite[Lemma 3.2]{CanHanParTol14}.

\begin{lemma}
\label{L1.2} Let $((\bar{c},\bar{b}),\bar{x})\in \mathrm{gph}(\mathcal{S})$
be given and assume that $P(\bar{c},\bar{b})$ satisfies the Slater
condition. Then there exist $M>0$ and neighborhoods $U$ of $\bar{x}$ and $V$
of $(\bar{c},\bar{b})$ such that, for all $(c,b)\in V$ and all $x\in \mathcal{%
S}(c,b)\cap U$, there exists $u\in \partial f(x)$ satisfying
\begin{equation}
-(c+u)\in \lbrack 0,M]\ \mathrm{co}\Big(\mathop{\bigcup}\limits_{t\in
T_{b}(x)}\partial g_{t}(x)\Big).  \label{1.25}
\end{equation}
\end{lemma}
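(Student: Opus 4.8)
The plan is to apply the KKT characterization (the Proposition above) at every nearby solution and then use the Slater point to pin down a \emph{uniform} bound on the total multiplier mass. First I would fix a Slater point $\hat x$ for $P(\bar c,\bar b)$, i.e.\ $g_t(\hat x)<\bar b_t$ for all $t\in T$, and set $\rho_0:=\min_{t\in T}\big(\bar b_t-g_t(\hat x)\big)$, which is strictly positive by compactness of $T$ and continuity of $(t,x)\mapsto g_t(x)$. Shrinking $V$ so that $\|b-\bar b\|_\infty<\rho_0/2$ for all $(c,b)\in V$, I obtain $g_t(\hat x)<b_t-\rho_0/2$ for every $t\in T$; in particular $\hat x$ is a Slater point of $P(c,b)$ for all $(c,b)\in V$, so the Proposition is applicable at every solution we consider.

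Next I would record the boundedness facts that make the final estimate uniform. Since $f$ is convex and finite on $\mathbb R^n$, it is locally Lipschitz, hence $\partial f$ is uniformly bounded on a bounded neighbourhood $U$ of $\bar x$: say $\|u\|\le L$ for all $u\in\partial f(x)$ and $x\in U$. After possibly shrinking $V$ and $U$ I may also assume $\|c\|\le\|\bar c\|+1$ on $V$ and $\|\hat x-x\|\le R:=\|\hat x-\bar x\|+1$ on $U$.

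With these choices in hand, I would fix an arbitrary $(c,b)\in V$ and $x\in\mathcal S(c,b)\cap U$ and apply the KKT conditions to obtain $u\in\partial f(x)$ with $-(c+u)=\sum_{i=1}^p\lambda_i v_i$, where $\lambda_i\ge0$, $v_i\in\partial g_{t_i}(x)$ and $t_i\in T_b(x)$. The core of the argument is to bound $\beta:=\sum_{i=1}^p\lambda_i$. Since each $t_i$ is active, $g_{t_i}(x)=b_{t_i}$, so the subgradient inequality gives $\langle v_i,\hat x-x\rangle\le g_{t_i}(\hat x)-b_{t_i}<-\rho_0/2$. Taking the $\lambda_i$-weighted sum yields $\langle -(c+u),\hat x-x\rangle\le-\beta\rho_0/2$, whence $\beta\rho_0/2\le\langle c+u,\hat x-x\rangle\le(\|c\|+L)R\le(\|\bar c\|+1+L)R$. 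Setting $M:=\tfrac{2}{\rho_0}(\|\bar c\|+1+L)R$ gives $\beta\le M$. If $\beta>0$, then $w:=\beta^{-1}\sum_i\lambda_i v_i\in\mathrm{co}\big(\bigcup_{t\in T_b(x)}\partial g_t(x)\big)$ and $-(c+u)=\beta w\in[0,M]\,\mathrm{co}\big(\bigcup_{t\in T_b(x)}\partial g_t(x)\big)$; if $\beta=0$, then $-(c+u)=0$, which belongs to that set as well. Either way this is the desired inclusion \eqref{1.25} with the uniform $M$, $U$, $V$ just constructed.

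The hard part is exactly the uniformity of the multiplier bound: the KKT conditions by themselves furnish, at each solution, multipliers with no a priori control on their size, and it is the Slater condition — through the strict-feasibility margin $\rho_0/2$ that survives throughout $V$ — that turns the subgradient inequalities into the uniform estimate $\beta\le M$. I would be careful to fix $\rho_0$, $L$, $R$ and the bound on $\|c\|$ once and for all via the choice of $U$ and $V$, so that none of them depends on the particular $(c,b)$ or $x$.
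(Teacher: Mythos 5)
Your proof is correct, but it follows a genuinely different route from the paper's. The paper argues by contradiction: assuming no uniform $M$ exists, it takes a sequence $((c^r,b^r),x^r)\to((\bar c,\bar b),\bar x)$ in $\mathrm{gph}(\mathcal S)$ whose KKT multiplier sums $\sigma_r\to+\infty$, then uses a filtering/compactness process (compactness of $T$ together with the outer semicontinuity and local boundedness of convex subdifferentials, via \cite[Theorem~24.5]{Roc70} and \cite[Lemma~3.1]{CanHanLopPar08}) to pass to the limit in the normalized relation $-(c^r+u^r)/\sigma_r=\sum_i(\lambda_i^r/\sigma_r)u_i^r$; the limit yields $0_n\in\mathrm{co}\big(\bigcup_{t\in T_{\bar b}(\bar x)}\partial g_t(\bar x)\big)$, which makes $\bar x$ a global minimizer of $\varphi(\cdot)=\sup_{t\in T}\{g_t(\cdot)-\bar b_t\}$ and contradicts $\varphi(\hat x)<0$ at the Slater point. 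You instead prove the multiplier bound directly: the uniform strict-feasibility margin $\rho_0/2$ (stable under the perturbation $\|b-\bar b\|_\infty<\rho_0/2$), the subgradient inequality at the active indices, and the local Lipschitz bound on $\partial f$ combine into the explicit estimate $\beta\le M:=\tfrac{2}{\rho_0}(\|\bar c\|+1+L)R$. Your approach buys an explicit, computable constant $M$ and avoids the sequential compactness machinery entirely, making the argument self-contained and arguably more informative; the paper's approach is shorter given the filtering lemma already imported from the linear theory, and requires no bookkeeping of the constants $\rho_0$, $L$, $R$. One cosmetic point: in the degenerate case $T_b(x)=\emptyset$ (so $\beta=0$ and $-(c+u)=0_n$) the inclusion \eqref{1.25} holds only under the natural convention that the right-hand side contains $0_n$, in line with the paper's convention $\mathrm{cone}(\emptyset)=\{0_n\}$; this is worth a sentence but is not a gap.
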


\begin{proof}
The result follows arguing by contradiction. By continuity we
can assume that $P(c,b)$ satisfies the Slater condition at any $(c,b)\in V$.
Then, thanks to the KKT conditions, together with Carath\'{e}odory Theorem,
there would exist a sequence $\mathrm{gph}(\mathcal{S})\ni
((c^{r},b^{r}),x^{r})\rightarrow ((\bar{c},\bar{b}),\bar{x})$ such that
\begin{equation}
-(c^{r}+u^{r})=\sum_{i=1}^{n}\lambda _{i}^{r}u_{i}^{r}  \label{marco5}
\end{equation}%
for some $\{t_{1}^{r},\dots ,t_{n}^{r}\}\subset T_{b^{r}}(x^{r})$, $\lambda
_{i}^{r}\geq 0$, and some $u^{r}\in \partial f(x^{r}),\ u_{i}^{r}\in
\partial g_{t_{i}}(x^{r}),\ r\in \mathbb{N}$, verifying $\sigma
_{r}:=\sum_{i=1}^{n}\lambda _{i}^{r}\rightarrow +\infty $ as $r$ tends to $%
+\infty $.

If we apply a filtering process as in \cite[Lemma~3.1]{CanHanLopPar08}, based on
the compactness of $T$ and \cite[Theorem~24.5]{Roc70}, we get the
existence of points $\{t_{1},\dots ,t_{n}\}\subset T_{\overline{b}}(%
\overline{x})$ such that $t_{i}^{r}\rightarrow t_{i},\ i=1,2,\ldots,n,$ and $%
u_{i}\in \partial g_{t_{i}}(\overline{x}),$ $i=1,2,\ldots,n,$ such that $%
u_{i}^{r}\rightarrow u_{i}$, $i=1,2,\ldots,n.$ Then, dividing both terms in (\ref%
{marco5}) by $\sigma _{r}$ and taking limits
%AK20/04/18
%for
as
$r\rightarrow \infty $
(after filtering again
%AK20/04/18
with
respect to the bounded coefficients $\lambda
_{i}^{r}/\sigma _{r}$, $i=1,2,\ldots,n)$, we reach the same contradiction with the
Slater condition.
\qed\end{proof}

The following proposition gives a characterization of the $q$-order calmness property for the level set mapping
$%
\mathcal{L}$ in terms of the supremum function defined by \eqref{2.5}. It
constitutes a H\"{o}lder convex counterpart of \cite[Proposition 3.1]%
{CanHanParTol14} (see also \cite[Theorem 4]{CanKruLopParThe14}).
%AK20/04/18
%Remember
Recall
that $q\in
(0,1]$.

\begin{proposition}
\label{P1} Let $((\bar c, \bar b), \bar x)\in \mathrm{gph}(\mathcal{S})$.
Then the following are equivalent:\newline
(i) $\mathcal{L}$ is $q$-order calm at $((f(\bar x)+\langle \bar c, \bar
x\rangle, \bar b), \bar x)\in \mathrm{gph}(\mathcal{L})$;\newline
(ii)$\liminf\limits_{x\to \bar x, \bar f(x)\downarrow0}\bar f(x)^{q-1}d(0,
\partial \bar f(x))>0$.
\end{proposition}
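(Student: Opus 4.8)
The plan is to prove the two equivalences
$$
\text{(i)}\quad\Longleftrightarrow\quad\big[\,\bar f\ \text{admits a $q$-order local error bound at}\ \bar x\,\big]\quad\Longleftrightarrow\quad\text{(ii)},
$$
thereby reducing the statement to the error-bound theory of Section~\ref{S3} applied to the convex supremum function $\bar f$. Writing $\bar\alpha:=f(\bar x)+\langle\bar c,\bar x\rangle$, recall from \eqref{marco6} that $\mathcal{L}(\bar\alpha,\bar b)=[\bar f\le0]$, and equip the parameter space $\mathbb{R}\times\mathcal{C}(T,\mathbb{R})$ with the norm $\max\{|\alpha|,\|b\|_\infty\}$ (the natural analogue of \eqref{1.4}).

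The heart of the first equivalence is the observation that, for every $x\in\mathbb{R}^n$, the minimal parameter perturbation forcing $x$ into a level set equals $[\bar f(x)]_+$. Indeed, $x\in\mathcal{L}(\alpha,b)$ means $f(x)+\langle\bar c,x\rangle\le\alpha$ and $g_t(x)\le b_t$ for all $t\in T$; the choice $\alpha=\max\{\bar\alpha,\,f(x)+\langle\bar c,x\rangle\}$ and $b_t=\max\{\bar b_t,\,g_t(x)\}$ (which lies in $\mathcal{C}(T,\mathbb{R})$) minimizes $\max\{|\alpha-\bar\alpha|,\|b-\bar b\|_\infty\}$ and yields the value $\max\big\{[f(x)+\langle\bar c,x\rangle-\bar\alpha]_+,\ \sup_{t\in T}[g_t(x)-\bar b_t]_+\big\}=[\bar f(x)]_+$. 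Conversely, whenever $x\in\mathcal{L}(\alpha,b)$ one has $\bar f(x)\le\max\{|\alpha-\bar\alpha|,\|b-\bar b\|_\infty\}$, i.e. $[\bar f(x)]_+\le d((\alpha,b),(\bar\alpha,\bar b))$. These two facts make the calmness inequality in \eqref{1.1} equivalent to the error bound $\tau\,d(x,[\bar f\le0])\le[\bar f(x)]_+^{q}$ near $\bar x$: the ``only if'' direction inserts the minimal perturbation (which lies in any prescribed $V$ once $x$ is close enough to $\bar x$, since $\bar f$ is continuous with $\bar f(\bar x)=0$), while the ``if'' direction uses $[\bar f(x)]_+^{q}\le d((\alpha,b),(\bar\alpha,\bar b))^{q}$ to dominate the right-hand side of \eqref{1.1} for every admissible $(\alpha,b)$.

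For the second equivalence I would invoke Corollary~\ref{T3.25}. The function $\bar f(x)=\sup\{g_t(x)-\bar b_t:\,t\in\overline T\}$ is convex and continuous, being a finite supremum of the jointly continuous convex functions $g_t-\bar b_t$ over the compact set $\overline T$; hence Corollary~\ref{T3.25} gives that the $q$-order error bound for $\bar f$ at $\bar x$ is equivalent to $\liminf_{x\to\bar x,\,\bar f(x)>0}\bar f^{q-1}(x)\,d(0,\partial\bar f(x))>0$. Since $x\to\bar x$ already forces $\bar f(x)\to0$ by continuity, the index set ``$\bar f(x)>0$'' under this $\liminf$ coincides with ``$\bar f(x)\downarrow0$'' in (ii), which closes the chain.

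The step I expect to require the most care is the first equivalence: one must verify rigorously that the universal quantifier over $(\alpha,b)\in V$ and $x\in\mathcal{L}(\alpha,b)\cap U$ in \eqref{1.1} collapses to the single worst-case (minimal) perturbation $[\bar f(x)]_+$, and that the neighbourhoods $U$, $V$ can be matched consistently with the ball $B_\delta(\bar x)$ of the error-bound estimate (the case $\bar f(x)\le0$ being trivial, as then $d(x,[\bar f\le0])=0$). The remaining arguments are routine given the Section~\ref{S3} machinery and \eqref{marco6}.
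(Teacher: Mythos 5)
Your proposal is correct and follows essentially the same route as the paper: your ``minimal perturbation'' computation is precisely the paper's key identity \eqref{marco13}, namely $[\bar f(x)]_+=d((f(\bar x)+\langle\bar c,\bar x\rangle,\bar b),\mathcal{L}^{-1}(x))$, which together with \eqref{marco6} converts $q$-order calmness of $\mathcal{L}$ (via the second expression in \eqref{1.6}) into the $q$-order error bound for $\bar f$, and the final step is the same appeal to Corollary~\ref{T3.25} for the convex continuous function $\bar f$. You merely spell out the quantifier-matching details that the paper's terse proof leaves implicit.
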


\begin{proof}
This result is a direct consequence
of the equivalence between the $q$-order calmness of $\mathcal{L}$ at
$((f(\bar{x})+\langle\bar{c},\bar{x}\rangle,\bar{b}),\bar{x})$ and the the
existence of a $q$-order error bound of $\bar{f}$ at $\bar{x},$ together with
%AK10/03/18
{Corollary}~\ref{T3.25},
(\ref{marco6}), and the following inequalities:
\begin{eqnarray}
\left[ \bar{f}(x)\right] _{+} &=&\left[ \sup \{f(x)-f(\bar{x})+\langle \bar{c%
},x-\bar{x}\rangle ;\;g_{t}(x)-\bar{b}_{t},\;t\in T\}\right] _{+}  \notag \\
&=&\sup \{[f(x)-f(\bar{x})+\langle \bar{c},x-\bar{x}\rangle
]_{+};\;[g_{t}(x)-\bar{b}_{t}]_{+},\;t\in T\}  \label{marco13} \\
&=&d((f(\bar{x})+\langle \bar{c},\bar{x}\rangle ,\bar{b}),\mathcal{L}%
^{-1}(x))\;\mathrm{for\;all}\;x\in \mathbb{R}^{n}.  \notag
\end{eqnarray}%
The proof is complete.
\qed\end{proof}

\begin{proposition}
Assume that $\mathcal{L}$ is not $q$-order calm at $((f(\bar{x}%
)+\langle\bar{c},\bar{x}\rangle,\bar{b}),\bar{x})\in\mathrm{gph}%
(\mathcal{L}).$ Then, there will exist sequences $\{x^{r}\}_{r\in\mathbb{N}}$
converging to $\bar{x}$ with $\bar{f}(x^{r})\downarrow0$, and $\{v^{r}%
\}_{r\in\mathbb{N}},$ $v^{r}\in\partial\overline{f}(x^{r})\diagdown\{0_{n}\},$
such that $v^{r}$ converges to $0_{n}$, and
\begin{equation}
d(x^{r},\mathcal{S}(\bar{c},\bar{b}))\geq\frac{\bar{f}(x^{r})}{\Vert
v^{r}\Vert}.\label{214}%
\end{equation}
\end{proposition}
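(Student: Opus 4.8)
The plan is to read this Proposition as a quantitative witness to the failure of the equivalence in Proposition~\ref{P1}, and to extract the required sequences directly from that failure. Since $\mathcal{L}$ is assumed \emph{not} $q$-order calm at $((f(\bar x)+\langle\bar c,\bar x\rangle,\bar b),\bar x)$, condition (i) of Proposition~\ref{P1} fails, hence so does the equivalent condition (ii). As the quantity $\bar f(x)^{q-1}d(0,\partial\bar f(x))$ is a product of two nonnegative factors, its lower limit is automatically $\ge 0$, so the failure of (ii) forces $\liminf_{x\to\bar x,\,\bar f(x)\downarrow 0}\bar f(x)^{q-1}d(0,\partial\bar f(x))=0$. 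From the definition of the lower limit I would then select a sequence $x^{r}\to\bar x$ with $\bar f(x^{r})\downarrow 0$ along which $\bar f(x^{r})^{q-1}d(0,\partial\bar f(x^{r}))\to 0$.

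Next I would fix the vectors $v^{r}$. Because $\bar f(x)=\sup\{g_{t}(x)-\bar b_{t}:t\in\overline T\}$ is a finite convex function on $\mathbb{R}^{n}$, it is continuous and its subdifferential $\partial\bar f(x^{r})$ is a nonempty compact convex set; I take $v^{r}$ to be its element of minimal norm, so that $\|v^{r}\|=d(0,\partial\bar f(x^{r}))$. Two facts must be checked. First, $v^{r}\ne 0_{n}$: by \eqref{marco6} one has $[\bar f\le 0]=[\bar f=0]=\mathcal{S}(\bar c,\bar b)$, so $\bar f\ge 0$ with minimum value $0$ attained exactly on $\mathcal{S}(\bar c,\bar b)$; since $\bar f(x^{r})>0$, the point $x^{r}$ is not a minimizer of the convex function $\bar f$, whence $0_{n}\notin\partial\bar f(x^{r})$ and $v^{r}\in\partial\bar f(x^{r})\setminus\{0_{n}\}$. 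Second, $v^{r}\to 0_{n}$: here I would use $q\in(0,1]$ decisively. For $r$ large enough that $\bar f(x^{r})\le 1$ one has $\bar f(x^{r})^{q-1}\ge 1$, so $\|v^{r}\|\le\bar f(x^{r})^{q-1}\|v^{r}\|\to 0$.

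It remains to establish the estimate \eqref{214}, which is exactly the `reverse' convex inequality underlying Lemma~\ref{L3}. For any $u\in\mathcal{S}(\bar c,\bar b)=[\bar f\le 0]$ the subgradient inequality gives $\bar f(u)\ge\bar f(x^{r})+\langle v^{r},u-x^{r}\rangle$, hence $\langle v^{r},x^{r}-u\rangle\ge\bar f(x^{r})-\bar f(u)\ge\bar f(x^{r})$, the last step using $\bar f(u)\le 0$. Cauchy--Schwarz then yields $\|v^{r}\|\,\|x^{r}-u\|\ge\bar f(x^{r})$, and taking the infimum over $u\in\mathcal{S}(\bar c,\bar b)$ produces $d(x^{r},\mathcal{S}(\bar c,\bar b))\ge\bar f(x^{r})/\|v^{r}\|$, which is \eqref{214}.

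Morally this argument is routine once Proposition~\ref{P1} and Lemma~\ref{L3} are in hand; the only place demanding genuine care is the simultaneous verification that $v^{r}$ is nonzero yet tends to $0_{n}$. The nonvanishing is secured by the strict positivity $\bar f(x^{r})>0$ together with $\bar f\ge 0=\min\bar f$, while the convergence $v^{r}\to 0_{n}$ hinges on the restriction $q\le 1$, which makes the exponent $q-1$ nonpositive and therefore $\bar f(x^{r})^{q-1}\ge 1$; without $q\le 1$ one could not decouple $\|v^{r}\|$ from the product tending to zero.
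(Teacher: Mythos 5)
Your proof is correct and takes essentially the same approach as the paper: it deduces from Proposition~\ref{P1} that $\liminf_{x\to\bar x,\,\bar f(x)\downarrow 0}\bar f(x)^{q-1}d(0,\partial\bar f(x))=0$, extracts the sequences, rules out $v^{r}=0_{n}$ via the global-minimum characterization of $0_{n}\in\partial\bar f(x^{r})$ for the convex function $\bar f$, uses $q\le 1$ to force $v^{r}\to 0_{n}$, and obtains \eqref{214} from the subgradient inequality over $\mathcal{S}(\bar c,\bar b)=[\bar f\le 0]$. The only cosmetic differences are that you make the minimal-norm selection of $v^{r}$ explicit and replace the paper's appeal to the Ascoli hyperplane-distance formula by a direct Cauchy--Schwarz estimate, which is the same computation.
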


\begin{proof}
Certainly, if
\[
\liminf\limits_{x\rightarrow\bar{x},\bar{f}(x)\downarrow0}\bar{f}%
(x)^{q-1}d(0,\partial\bar{f}(x))=0,
\]
there must exist sequences $x^{r}\rightarrow\bar{x},$ with $\bar{f}%
(x^{r})\downarrow0,$ and $v^{r}\in\partial\overline{f}(x^{r}),$ such that
\begin{equation}
\lim\limits_{r\rightarrow+\infty}\bar{f}(x^{r})^{q-1}v^{r}=0_{n}%
,\label{marco2}%
\end{equation}
entailing $v^{r}\rightarrow0_{n}$ as
\[
\lim_{r\rightarrow+\infty}\bar{f}(x^{r})^{q-1}=\left\{
\begin{array}
[c]{ll}%
+\infty, & \text{if }q<1,\\
1, & \text{if }q=1.
\end{array}
\right.
\]

First, we observe that $v^{r}\neq0_{n},\quad r=1,2,...$ Otherwise, i.e. if
$v^{r}=0_{n}$ for some $r$, then $0_{n}\in\partial\overline{f}(x^{r})$ and
$x^{r}$ is a (global) minimum of the convex function $\overline{f}$, entailing
$\overline{f}(x^{r})\leq\overline{f}(\overline{x})=0,$ but this contradicts
$\overline{f}(x^{r})>0.$

Finally, (\ref{214}) follows from the obvious fact
\[
\mathcal{S}(\bar{c},\bar{b})=\{x\in\mathbb{R}^{n}|\;\bar{f}(x)=0\}=\{x\in
\mathbb{R}^{n}|\;0_{n}\in\partial\bar{f}(x)\}.
\]
Since $v^{r}\in\partial\bar{f}(x^{r}),$ we have
\[
x\in\mathcal{S}(\bar{c},\bar{b})\Rightarrow0=\bar{f}(x)\geq\bar{f}%
(x^{r})+\left\langle v^{r},x-x^{r}\right\rangle ,
\]
and we conclude
\[
\mathcal{S}(\bar{c},\bar{b})\subset\{x\in\mathbb{R}^{n}|\;\left\langle
v^{r},x\right\rangle \leq\left\langle v^{r},x^{r}\right\rangle -\bar{f}%
(x^{r})\}.
\]
Applying the well-known Ascoli formula for the distance to a hyperplane we
get
\[
d(x^{r},S(\bar{c},\bar{b}))\geq\frac{\lbrack\bar{f}(x^{r})]_{+}}{\Vert
v^{r}\Vert}=\frac{\bar{f}(x^{r})}{\Vert v^{r}\Vert}.
\]
\qed\end{proof}

The following proposition provides a necessary condition in the case that
$\mathcal{L}$ is not $q$-order calm. The proof updates some arguments in
\cite[Theorem 3.1]{CanHanParTol14} to the convex $q$-H\"{o}lder setting.

\begin{proposition}\label{P3.2}
Let $((\bar{c},\bar{b}),\bar{x})\in\mathrm{gph}%
(\mathcal{S})$ and assume that $P(\bar{c},\bar{b})$ satisfies the Slater
condition. Suppose that $\mathcal{L}$ is not $q$-order calm at $((f(\bar
{x})+\langle\bar{c},\bar{x}\rangle,\bar{b}),\bar{x})\in\mathrm{gph}%
(\mathcal{L})$. Then there exist sequences $\{x^{r}\}_{r\in\mathbb{N}}$
converging to $\bar{x}$ and $\{b^{r}\}_{r\in\mathbb{N}}\subset C(T,\mathbb{R}%
)$ converging to $\bar{b}$ such that
\begin{equation}
x^{r}\in\mathcal{F}(b^{r}),\quad\lim_{r\rightarrow+\infty}\frac{\Vert
b^{r}-\bar{b}\Vert_{\infty}^{q}}{d(x^{r},S_{\bar{c}}(\bar{b}))}=0,\label{1.41}%
\end{equation}
as well as a finite set $T_{0}\subset T_{b^{r}}(x^{r})$ satisfying
\begin{equation}
-(\bar{c}+u)\in\sum_{t\in T_{b^{r}}(x^{r})}\gamma_{t}u_{t}\label{3.17}%
\end{equation}
with $\gamma _{t}>0,\ u_{t}\in \partial g_{t}(\bar{x}),\ t\in T_{0},$ and $%
u\in \partial f(\bar{x})$.
\end{proposition}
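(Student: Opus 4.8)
The plan is to manufacture the sequences from the failure of calmness, certify the perturbation estimate \eqref{1.41} via the lower bound \eqref{214}, and then produce the \emph{limiting} Karush--Kuhn--Tucker relation \eqref{3.17} through a compactness/filtering argument at the reference point. First I would restate the hypothesis: by Proposition~\ref{P1}, the failure of $q$-order calmness of $\mathcal{L}$ means $\liminf_{x\to\bar{x},\,\bar{f}(x)\downarrow0}\bar{f}(x)^{q-1}d(0,\partial\bar{f}(x))=0$, so there exist $x^{r}\to\bar{x}$ with $\bar{f}(x^{r})\downarrow0$ and $v^{r}\in\partial\bar{f}(x^{r})$ with $\bar{f}(x^{r})^{q-1}v^{r}\to0_{n}$, as in \eqref{marco2}. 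Because $\bar{f}(x^{r})^{q-1}\to+\infty$ when $q<1$ and equals $1$ when $q=1$, this forces $v^{r}\to0_{n}$; moreover $v^{r}\neq0_{n}$, since $0_{n}\in\partial\bar{f}(x^{r})$ would make $x^{r}$ a global minimiser of the convex function $\bar{f}$, contradicting $\bar{f}(x^{r})>0$.

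Next I would perturb only the right-hand side by setting $b^{r}_{t}:=\max\{g_{t}(x^{r}),\bar{b}_{t}\}$. Then $g_{t}(x^{r})\le b^{r}_{t}$ for all $t\in T$, i.e.\ $x^{r}\in\mathcal{F}(b^{r})$, and by joint continuity of $(t,x)\mapsto g_{t}(x)$ together with compactness of $T$ one has $b^{r}\to\bar{b}$ and
\[
\|b^{r}-\bar{b}\|_{\infty}=\max_{t\in T}[g_{t}(x^{r})-\bar{b}_{t}]_{+}\le\max_{t\in\overline{T}}[g_{t}(x^{r})-\bar{b}_{t}]_{+}=\bar{f}(x^{r}).
\]
Combining this with \eqref{214}, which reads $d(x^{r},S_{\bar{c}}(\bar{b}))\ge\bar{f}(x^{r})/\|v^{r}\|$, yields
\[
\frac{\|b^{r}-\bar{b}\|_{\infty}^{q}}{d(x^{r},S_{\bar{c}}(\bar{b}))}\le\frac{\bar{f}(x^{r})^{q}\,\|v^{r}\|}{\bar{f}(x^{r})}=\bar{f}(x^{r})^{q-1}\|v^{r}\|\longrightarrow0,
\]
which is exactly the limit in \eqref{1.41}.

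To obtain \eqref{3.17}, I would expand $v^{r}$ via \eqref{marco3} and Carath\'eodory's theorem in $\mathbb{R}^{n}$ as $v^{r}=\sum_{i=1}^{n+1}\lambda_{i}^{r}u_{i}^{r}$ with $\lambda_{i}^{r}\ge0$, $\sum_{i}\lambda_{i}^{r}=1$, $t_{i}^{r}\in\overline{T}(x^{r})$ and $u_{i}^{r}\in\partial g_{t_{i}^{r}}(x^{r})$. For every index $t_{i}^{r}\in T$ one has $g_{t_{i}^{r}}(x^{r})-\bar{b}_{t_{i}^{r}}=\bar{f}(x^{r})>0$, so $b^{r}_{t_{i}^{r}}=g_{t_{i}^{r}}(x^{r})$ and hence $t_{i}^{r}\in T_{b^{r}}(x^{r})$; this is what places the active indices inside $T_{b^{r}}(x^{r})$. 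Since finite-dimensional convex functions are locally Lipschitz and $(t,x)\mapsto g_{t}(x)$ is continuous on $\overline{T}\times K$ for a compact neighbourhood $K$ of $\bar{x}$, the subgradients $u_{i}^{r}$ are uniformly bounded, so along a subsequence $t_{i}^{r}\to t_{i}$, $u_{i}^{r}\to u_{i}$ and $\lambda_{i}^{r}\to\lambda_{i}$. Passing to the limit in the subgradient inequality gives $u_{i}\in\partial g_{t_{i}}(\bar{x})$, while $t_{i}\in\overline{T}(\bar{x})$ follows from $g_{t_{i}^{r}}(x^{r})-\bar{b}_{t_{i}^{r}}=\bar{f}(x^{r})\to0$; since $v^{r}\to0_{n}$ we reach $0_{n}=\sum_{i}\lambda_{i}u_{i}$ with $\sum_{i}\lambda_{i}=1$. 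Invoking the Slater condition precisely as in the derivation of \eqref{marco4}, the index $t_{0}$ must carry a positive coefficient; lumping all terms with $t_{i}=t_{0}$ into a single term $\mu_{0}(u+\bar{c})$, with $\mu_{0}>0$ and $u\in\partial f(\bar{x})$ (by $\partial g_{t_{0}}(\bar{x})=\partial f(\bar{x})+\bar{c}$), and dividing by $\mu_{0}$ produces $-(\bar{c}+u)=\sum_{t_{i}\neq t_{0}}(\lambda_{i}/\mu_{0})u_{i}$, i.e.\ \eqref{3.17} with $T_{0}:=\{t_{i}:t_{i}\neq t_{0},\ \lambda_{i}>0\}$, $\gamma_{t_{i}}:=\lambda_{i}/\mu_{0}>0$ and $u_{t_{i}}:=u_{i}\in\partial g_{t_{i}}(\bar{x})$.

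The crux is the limit passage of the last step. One must justify the uniform boundedness of the $u_{i}^{r}$ and the closedness $u_{i}^{r}\to u_{i}\in\partial g_{t_{i}}(\bar{x})$ under the \emph{simultaneous} convergences $t_{i}^{r}\to t_{i}$ and $x^{r}\to\bar{x}$: this is the filtering argument already used in Lemma~\ref{L1.2} and in \cite[Lemma~3.1]{CanHanLopPar08}, \cite[Theorem~24.5]{Roc70}. A further point requiring care is the reconciliation between the finite-stage active sets $T_{b^{r}}(x^{r})$, which contain the $\bar{f}$-active indices $t_{i}^{r}\in T$, and the limit set $T_{0}\subset T_{\bar{b}}(\bar{x})$ carrying the multipliers in \eqref{3.17}; in the linear framework of \cite{CanHanParTol14} this reconciliation is automatic, since there $\partial g_{t}$ does not depend on the base point.
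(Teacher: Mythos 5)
Your first half is correct and is in fact simpler than the paper's argument: the sequences $x^{r}$, $v^{r}$ come from the preceding proposition exactly as in the paper, and your choice $b_{t}^{r}:=\max\{g_{t}(x^{r}),\bar{b}_{t}\}$ immediately gives $x^{r}\in\mathcal{F}(b^{r})$, $\Vert b^{r}-\bar{b}\Vert_{\infty}\leq\bar{f}(x^{r})$, and hence \eqref{1.41} via \eqref{214} and \eqref{marco2}. The gap is in the second half. The proposition does not merely ask for a KKT relation at $\bar{x}$ (which, incidentally, you could get directly from the Slater condition and the KKT characterization at $\bar{x}\in\mathcal{S}(\bar{c},\bar{b})$, without any limiting/filtering argument); it asks for a finite set $T_{0}$ that \emph{simultaneously} carries multipliers with subgradients at $\bar{x}$ and satisfies $T_{0}\subset T_{b^{r}}(x^{r})$ for the perturbations you construct. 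Your $T_{0}$ consists of the limits $t_{i}=\lim_{r}t_{i}^{r}$ of the $\bar{f}$-active indices at $x^{r}$, but what you verify is only $t_{i}^{r}\in T_{b^{r}}(x^{r})$, which says nothing about the limit points: with your $b^{r}$ one has $T_{b^{r}}(x^{r})=\{t\in T:\ g_{t}(x^{r})\geq\bar{b}_{t}\}$, and nothing prevents $g_{t_{i}}(x^{r})<\bar{b}_{t_{i}}$ for every $r$, in which case $t_{i}\notin T_{b^{r}}(x^{r})$. You flag this ``reconciliation'' issue yourself at the end but do not resolve it, and the appeal to the linear framework is beside the point: activity is a statement about function values, not subdifferentials, so it is not automatic there either, and in any case the proposition at hand is the convex one.

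The paper's proof is organized precisely to avoid this trap, and the order of steps matters. It first fixes $T_{0}\subset T_{\bar{b}}(\bar{x})$ and the multipliers from the KKT conditions at $\bar{x}$ (Slater condition plus Carath\'eodory), so that \eqref{1.44} holds outright with subgradients at $\bar{x}$. It then proves the key inequality \eqref{1.45}: there exists $N>0$ with $g_{t}(x^{r})-\bar{b}_{t}\geq-N\bar{f}(x^{r})$ for all $t\in T_{0}$ and all $r$, which follows from \eqref{1.44} and the subgradient inequalities and quantifies that the indices of $T_{0}$ are nearly active at $x^{r}$. Only then is $b^{r}$ built, via Urysohn's lemma, so that $b_{t}^{r}=g_{t}(x^{r})$ whenever $g_{t}(x^{r})-\bar{b}_{t}\geq-N\bar{f}(x^{r})$ --- in particular for every $t\in T_{0}$, which yields $T_{0}\subset T_{b^{r}}(x^{r})$ --- while still $|b_{t}^{r}-\bar{b}_{t}|\leq(N+1)\bar{f}(x^{r})$, so \eqref{1.41} survives. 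Note that you cannot rescue your construction by simply forcing $b_{t}^{r}:=g_{t}(x^{r})$ on a KKT set $T_{0}$ chosen at $\bar{x}$: without \eqref{1.45} the deviation $|g_{t}(x^{r})-\bar{b}_{t}|$ for $t\in T_{0}$ is not controlled by $\bar{f}(x^{r})$ (it could vanish much more slowly), so the bound $\Vert b^{r}-\bar{b}\Vert_{\infty}=O(\bar{f}(x^{r}))$, and with it the limit in \eqref{1.41}, could fail. The inequality \eqref{1.45} is therefore the indispensable ingredient missing from your proposal.
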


\begin{proof}
We have established that there exist sequences $\{x^{r}%
\}_{r\in\mathbb{N}}$ converging to $\bar{x}$ with $\bar{f}(x^{r})\downarrow0$,
and $\{v^{r}\}_{r\in\mathbb{N}},$ $v^{r}\in\partial\overline{f}(x^{r}%
)\diagdown\{0_{n}\},$ such that $v^{r}\rightarrow0$ and (\ref{214}) and
(\ref{marco2}) hold.

Applying Proposition 5.1 (remember that $P(\bar{c},\bar{b})$ satisfies the
Slater condition), we know that, associated with $\bar{x}\in S(\bar{c},\bar
{b}),$ there is a finite subset $T_{0}\subset T_{\bar{b}}(\bar{x})$ such that
\begin{equation}
-(\bar{c}+u)=\sum_{t\in T_{0}}\gamma_{t}u_{t},\label{1.44}%
\end{equation}
for some $\gamma_{t}>0,\ u_{t}\in\partial g_{t}(\bar{x}),\ t\in T_{0},$ and
$u\in\partial f(\bar{x}).$ Now we proceed by showing the existence of $N>0$
such that
\begin{equation}
g_{t}(x^{r})-\bar{b}_{t}\geq-N\bar{f}(x^{r})\quad\forall t\in T_{0}%
\;\text{and}\;r\in\mathbb{N}.\label{1.45}%
\end{equation}
We have that (\ref{1.44}) gives rise to
\begin{align}
-\sum_{t\in T_{0}}\gamma_{t}(g_{t}(x^{r})-\bar{b}_{t})  & =-\sum_{t\in T_{0}%
}\gamma_{t}(g_{t}(x^{r})-g_{t}(\bar{x}))\leq-\sum_{t\in T_{0}}\gamma
_{t}\langle u_{t},x^{r}-\bar{x}\rangle\nonumber\\
& =\langle\bar{c}+u,x^{r}-\bar{x}\rangle\leq\langle\bar{c},x^{r}-\bar
{x}\rangle+f(x^{r})-f(\bar{x})\nonumber\\
& =\bar{f}(x^{r}).\label{1.46}%
\end{align}
The set $T_{0}$ is finite, and this allows us to suppose that the following
sets are independent of $r$ (by taking a suitable subsequence if needed);
\[
T_{0}^{-}=\{t\in T_{0}|\;g_{t}(x^{r})-\bar{b}_{t}<0\}\quad\text{and}\quad
T_{0}^{+}=\{t\in T_{0}|\;g_{t}(x^{r})-\bar{b}_{t}\geq0\}.
\]
The inequality \eqref{1.45} is obvious for $t\in T_{0}^{+}$. In the
non-trivial case, i.e. when $T_{0}^{-}\neq\emptyset$, for any $\widetilde
{t}\in T_{0}^{-}$ we could deduce from \eqref{1.46} and the definition of
$\bar{f}(x^{r})$ that
\begin{align}
-\gamma_{\widetilde{t}}(g_{\widetilde{t}}(x^{r})-\bar{b}_{\widetilde{t}}) &
\leq-\sum_{t\in T_{0}^{-}}\gamma_{t}(g_{t}(x^{r})-\bar{b}_{t})\leq\bar
{f}(x^{r})+\sum_{t\in T_{0}^{+}}\gamma_{t}(g_{t}(x^{r})-\bar{b}_{t}%
)\label{marco12}\\
&  \leq\bar{f}(x^{r})+\sum_{t\in T_{0}^{+}}\gamma_{t}\bar{f}(x^{r}),\nonumber
\end{align}
and this implies that
\[
g_{\widetilde{t}}(x^{r})-\bar{b}_{\widetilde{t}}\geq\frac{\left(
1+\sum\limits_{t\in T_{0}^{+}}\gamma_{t}\right)  \bar{f}(x^{r})}%
{-\gamma_{\widetilde{t}}}.
\]
Accordingly we take
\[
N:=\max_{t\in T_{0}^{-}}\frac{1+\sum_{t\in T_{0}^{+}}\gamma_{t}}%
{\gamma_{\widetilde{t}}},
\]
which satisfies (\ref{1.45}).

Next we build the sequence $\{b^{r}\}_{r\in\mathbb{N}}$. Urysohn's Lemma
yields the existence, for each $r$, of a function $\varphi_{r}\in C(T,[0,1])$
such that
\[
\varphi_{r}(t)=\left\{
\begin{array}
[c]{ll}%
0, & \ \text{if}\;g_{t}(x^{r})-\bar{b}_{t}\geq-N\bar{f}(x^{r}),\\
1, & \ \text{if }g_{t}(x^{r})-\bar{b}_{t}\leq-(N+1)\bar{f}(x^{r}),
\end{array}
\right.
\]

Then, for each $t\in T$, we define
\begin{equation}
b_{t}^{r}:=(1-\varphi_{r}(t))g_{t}(x^{r})+\varphi_{r}(t)(\bar{b}_{t}+\bar
{f}(x^{r})).\label{4.26}%
\end{equation}
If the set\textbf{ }$\{t\in T\mid g_{t}(x^{r})-\bar{b}_{t}\leq-(N+1)\bar
{f}(x^{r})\}$ is empty we take\textbf{ }$\varphi_{r}(t)=0$ for all $t\in T.$

For each $r$,
\[
b_{t}^{r}-g_{t}(x^{r})=\varphi_{r}(t)(\bar{b}_{t}+\bar{f}(x^{r})-g_{t}%
(x^{r}))\geq0,
\]
and, thus, $x^{r}\in\mathcal{F}(b^{r}).$

We easily check that, when $\varphi_{r}(t)=1,$
\[
b_{t}^{r}-\bar{b}_{t}=\bar{f}(x^{r})<(N+1)\bar{f}(x^{r}),
\]
and when $\varphi_{r}(t)<1,$ $-(N+1)\bar{f}(x^{r})<g_{t}(x^{r})-\bar{b}%
_{t}\leq\bar{f}(x^{r}),$ entailing%
\[
b_{t}^{r}-\bar{b}_{t}\geq-(N+1)\bar{f}(x^{r}).
\]
Therefore, for all $t\in T$ and all $r\in\mathbb{N}$,
\begin{equation}
|b_{t}^{r}-\bar{b}_{t}|\ \leq(N+1)\bar{f}(x^{r}).\label{1.51}%
\end{equation}
In addition, \eqref{1.45} yields $T_{0}\subset T_{b^{r}}(x^{r})$ (as
$\varphi_{r}(t)=0$ if $t\in T_{0}$). This, together with \eqref{1.44}, leads
us to \eqref{3.17}.

Finally, appealing to \eqref{214}, \eqref{1.51} and (\ref{marco2}), we prove
\eqref{1.41} as follows:
\begin{align}
\lim_{r\rightarrow+\infty}\frac{\Vert b^{r}-\bar{b}\Vert_{\infty}^{q}}%
{d(x^{r},S(\bar{c},\bar{b}))}  & \leq\lim_{r\rightarrow+\infty}\frac{\Vert
v^{r}\Vert}{\bar{f}(x^{r})}\Vert b^{r}-\bar{b}\Vert_{\infty}^{q}\nonumber\\
& \leq\lim_{r\rightarrow+\infty}\frac{\Vert v^{r}\Vert}{\bar{f}(x^{r}%
)}(N+1)^{q}\bar{f}(x^{r})^{q}\label{marco10}\\
& =\lim_{r\rightarrow+\infty}(N+1)^{q}\left\{  \bar{f}(x^{r})^{q-1}\left\Vert
v^{r}\right\Vert \right\}  =0.\nonumber
\end{align}
The proof is complete.
\qed\end{proof}

\begin{remark}%\label{E5.1}
The following example shows that, in the convex setting, the condition $%
x^{r}\in \mathcal{F}(b^{r})$
%AK20/04/18
%could
cannot be strengthened to $x^{r}\in \mathcal{%
S}_{\bar{c}}(b^{r})$ for the sequence $\{x^{r}\}_{r\in \mathbb{N}}$
%obtained
in Proposition \ref{P3.2} as it happens in the linear case (see the proof
of \cite[Theorem 3.1]{CanHanParTol14}).

Consider the convex problem in $\mathbb{R}$: %AK20/04/18
%endowed with the
%Euclidean norm,
\begin{equation*}
\begin{aligned}
{\rm minimize}&\quad x^2 \\ {\rm subject\; to}&\quad x\leq
0.
\end{aligned}
\end{equation*}

Given $q\in \left( \frac{1}{2},1\right] $, take $\bar{c}=0,\ \bar{b}=0$, and
$\bar{x}=0$. Then $\mathcal{S}_{\bar{c}}(\bar{b})=\{0\}$ and the supremum
function $\bar{f}(x)=\sup \{x^{2},x\}$. Clearly, $\bar{f}(x)=x^{2}$ for $%
x\in (-\infty ,0]$. Then it is easy to verify that $\liminf\limits_{x%
\rightarrow \bar{x},\bar{f}(x)\downarrow 0}\bar{f}(x)^{q-1}d(0,\partial \bar{%
f}(x))=0$ and, thus, by Proposition \ref{P1}, the level set mapping $\mathcal{L}
$ is not $q$-order calm at $((0,0),0)\in \mathrm{gph}(\mathcal{L})$.
Moreover, there exist sequences $x^{r}:=-2^{-r}$ and $b^{r}:=2^{-2r}$ such
that
\begin{equation}
x^{r}=-2^{-r}\in \mathcal{F}(2^{-2r})=\mathcal{F}(b^{r})\quad \mbox{and}%
\quad \lim_{r\rightarrow +\infty }\frac{\Vert b^{r}-\bar{b}\Vert _{\infty
}^{q}}{d(x^{r},S_{\bar{c}}(\bar{b}))}=\lim_{r\rightarrow +\infty }\frac{1}{%
2^{(2q-1)r}}=0.
\end{equation}%
Recalling that $\mathrm{cone}(\emptyset )=\{0\}$, \eqref{3.17} also holds in
this setting. Obviously, $x^{r}\in \mathcal{F}(b^{r})$ but $x^{r}\notin
\mathcal{S}_{\bar{c}}(b^{r})=\{0\}$ for any $r\in \mathbb{N}$.

On the other hand, we have
\begin{equation}
\mathcal{S}_{\bar{c}}(b)=%
\begin{cases}
\{0\}\quad \forall b\in \lbrack 0,1), \\
\{b\}\quad \forall b\in (-1,0).%
\end{cases}
\label{5.27}
\end{equation}%
Noting that $\Vert b\Vert \leq \Vert b\Vert ^{\frac{2}{3}}$ for all $b\in
(-1,1)$, it readily follows from \eqref{5.27} that
\begin{equation}
d(x,\mathcal{S}_{\bar{c}}(\bar{b}))\leq \Vert b-\bar{b}\Vert ^{\frac{2}{3}%
}\quad \forall x\in \mathcal{S}_{\bar{c}}(b)\cap (-1,1)\;\mbox{and}\;b\in
(-1,1),
\end{equation}%
which guarantees that $\mathcal{S}_{\bar{c}}$ is $\frac{2}{3}$-order calm at
$(0,0)$.
\end{remark}

The above example reveals the fact that the $q$-order calmness of $\mathcal{S%
}$ at $((\bar{c},\bar{b}),\bar{x})$ may not imply the validity of the $q$%
-order calmness of $\mathcal{L}$ at $((f(\bar{x})+\langle \bar{c},\bar{x}%
\rangle ,\bar{b}),\bar{x})$. The following theorem constitutes a H\"{o}lder
convex counterpart of \cite[Theorem 3.1]{CanHanParTol14} for the linear case.

\begin{theorem}
Let $\bar{x}\in \mathcal{S}(\bar{c},\bar{b})$ and assume that $P(\bar{c},%
\bar{b})$ satisfies the Slater condition. Consider the following statements:%
\newline
(i) $\mathcal{S}$ is $q$-order calm at $((\bar{c},\bar{b}),\bar{x})$;\newline
(ii) $\mathcal{S}_{\bar{c}}$ is $q$-order calm at $(\bar{b},\bar{x})$;%
\newline
(iii) $\mathcal{L}$ is $q$-order calm at $((f(\bar{x})+\langle \bar{c},\bar{x%
}\rangle ,\bar{b}),\bar{x})$;\newline
(iv) $\bar{f}$ has a $q$-order local error bound at $\bar{x}$.\newline
Then $(iii)\Leftrightarrow (iv)\Rightarrow (i)\Rightarrow (ii)$ hold. In
addition, if $f$ and $g_{t}$ are linear, then $(i)\Leftrightarrow
(ii)\Leftrightarrow (iii)\Leftrightarrow (iv)$.
\end{theorem}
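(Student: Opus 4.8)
The plan is to verify the chain $(iii)\Leftrightarrow(iv)\Rightarrow(i)\Rightarrow(ii)$ in the general convex setting, and then to close the cycle with $(ii)\Rightarrow(iii)$ under the additional linearity assumption.

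First, the equivalence $(iii)\Leftrightarrow(iv)$ is essentially already assembled. Since $\bar f$ is a finite convex (hence continuous) function with $\bar x\in[\bar f\le0]$, statement (iv) is exactly the existence of a $q$-order error bound of $\bar f$ at $\bar x$; by Corollary~\ref{T3.25} this is equivalent to $\liminf_{x\to\bar x,\ \bar f(x)\downarrow0}\bar f(x)^{q-1}d(0,\partial\bar f(x))>0$, and Proposition~\ref{P1} identifies the latter with the $q$-order calmness of $\mathcal L$, i.e.\ (iii). The implication $(i)\Rightarrow(ii)$ is immediate: $\mathcal S_{\bar c}$ is the restriction of $\mathcal S$ to perturbations with $c=\bar c$, for which $\|(\bar c,b)-(\bar c,\bar b)\|=\|b-\bar b\|_\infty$ while $\mathcal S_{\bar c}(b)=\mathcal S(\bar c,b)$ and $\mathcal S_{\bar c}(\bar b)=\mathcal S(\bar c,\bar b)$, so the calmness estimate for $\mathcal S$ restricts verbatim to one for $\mathcal S_{\bar c}$.

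The substance of the argument is $(iv)\Rightarrow(i)$. Using \eqref{marco6} and (iv), fix $\tau,\delta>0$ with $\tau\,d(x,\mathcal S(\bar c,\bar b))\le[\bar f(x)]_+^q$ on $B_\delta(\bar x)$. I would then show that $[\bar f(x)]_+$ is bounded by a multiple of the perturbation size whenever $x\in\mathcal S(c,b)\cap U$ with $(c,b)\in V$. Since $\bar f(x)=\max\{f(x)+\langle\bar c,x\rangle-(f(\bar x)+\langle\bar c,\bar x\rangle),\ \sup_{t\in T}(g_t(x)-\bar b_t)\}$, the constraint group is controlled by feasibility, $g_t(x)-\bar b_t\le b_t-\bar b_t\le\|b-\bar b\|_\infty$. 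For the objective group I would invoke the uniform boundedness of the KKT multipliers: shrinking $V$ so that Slater persists, Lemma~\ref{L1.2} supplies $M>0$, $u\in\partial f(x)$ and a representation $-(c+u)=\sum_i\gamma_iu_i$ with $u_i\in\partial g_{t_i}(x)$, $t_i\in T_b(x)$, $\gamma_i\ge0$, $\sum_i\gamma_i\le M$. Combining the subgradient inequalities for $f$ at $x$ and for each $g_{t_i}$ at $x$ with the active equalities $g_{t_i}(x)=b_{t_i}$ and the feasibility $g_{t_i}(\bar x)\le\bar b_{t_i}$ yields $f(x)+\langle\bar c,x\rangle-(f(\bar x)+\langle\bar c,\bar x\rangle)\le\langle u+\bar c,x-\bar x\rangle\le M\|b-\bar b\|_\infty+\|c-\bar c\|\,\|x-\bar x\|$, which on the bounded set $U$ is at most $K\|(c,b)-(\bar c,\bar b)\|$ for a suitable constant $K$. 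Hence $[\bar f(x)]_+\le K\|(c,b)-(\bar c,\bar b)\|$, so $\tau\,d(x,\mathcal S(\bar c,\bar b))\le K^q\|(c,b)-(\bar c,\bar b)\|^q$, which is precisely $q$-order calmness of $\mathcal S$.

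It remains to prove $(ii)\Rightarrow(iii)$ in the linear case, which I would do by contraposition. If $\mathcal L$ is not $q$-order calm, Proposition~\ref{P3.2} produces sequences $x^r\to\bar x$, $b^r\to\bar b$ with $x^r\in\mathcal F(b^r)$, the multiplier relation \eqref{3.17}, and the ratio in \eqref{1.41} tending to $0$. The only gap between this and $\neg(ii)$ is that $x^r$ is a priori merely feasible, not optimal, for $P(\bar c,b^r)$. When $f$ and $g_t$ are affine, the subdifferentials $\partial f$ and $\partial g_t$ are constant, so \eqref{3.17}, which holds with subgradients taken at $\bar x$, holds equally with subgradients at $x^r$; as $T_0\subset T_{b^r}(x^r)$, this is exactly the KKT condition for $P(\bar c,b^r)$ at $x^r$, whence $x^r\in\mathcal S_{\bar c}(b^r)$ (Slater persisting along the sequence). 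The same sequences then witness $\clmq\mathcal S_{\bar c}(\bar b,\bar x)=0$, i.e.\ $\neg(ii)$, and together with the established chain this gives $(i)\Leftrightarrow(ii)\Leftrightarrow(iii)\Leftrightarrow(iv)$. The main obstacle is the objective-increment estimate in $(iv)\Rightarrow(i)$, where turning optimality of $x$ for the perturbed problem into a linear-in-perturbation bound on $\bar f(x)$ relies on the uniform multiplier control of Lemma~\ref{L1.2}; the passage to genuine optimality of $x^r$ in $(ii)\Rightarrow(iii)$ is where affinity is indispensable, and indeed the remark preceding the theorem shows this step fails in the merely convex case.
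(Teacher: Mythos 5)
Your proposal is correct and follows essentially the same route as the paper: $(iii)\Leftrightarrow(iv)$ via Proposition~\ref{P1}, the obvious restriction argument for $(i)\Rightarrow(ii)$, the error bound combined with Lemma~\ref{L1.2} and the subgradient/active-index estimates for $(iv)\Rightarrow(i)$, and contraposition through Proposition~\ref{P3.2} with the KKT relation transferring to $x^r$ in the affine case for $(ii)\Rightarrow(iii)$. Your added detail that constancy of the subdifferentials is what lets \eqref{3.17} serve as the KKT condition at $x^r$ is exactly the point the paper leaves implicit in its phrase ``readily follows from the KKT conditions \eqref{3.17} in the linear setting.''
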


\begin{proof}
$(iii)\Leftrightarrow (iv)$ is Proposition \ref{P1}, while $%
(i)\Rightarrow (ii)$ is obvious. Now, we proceed by proving that $%
(iv)\Rightarrow (i)$. According to $(iv)$, there exist $\tau ,\delta \in
(0,+\infty )$ such that
\begin{equation*}
\tau d\left( x,\left[ \bar{f}\leq 0\right] \right) \leq \bar{f}%
_{+}(x)^{q}\quad \forall x\in B_\de(\bx).
\end{equation*}%
According to Lemma \ref{L1.2}, we may suppose that \eqref{1.25} holds for $%
U=B_\de(\bx)$, together with a certain neighborhood $V$ of $(%
\bar{c},\bar{b})$ and a certain $M>0$. Then, for all $(c,b)\in V$ and all $%
x\in \mathcal{S}(c,b)\cap U\cap \lbrack \bar{f}>0]$, it follow from (\ref%
{marco6}) that
\begin{align}
\tau d(x,\mathcal{S}(\bar{c},\bar{b})) =&\tau d(x,[\bar{f}\leq 0])\leq \bar{%
f}(x)^{q}  \notag \\
=&\left[ \sup \{f(x)-f(\bar{x})+\langle \bar{c},x-\bar{x}\rangle
;\;g_{t}(x)-\bar{b}_{t},\;t\in T\}\right] ^{q}  \label{1.55} \\
\leq &\left[ \sup \{f(x)-f(\bar{x})+\langle \bar{c},x-\bar{x}\rangle
;\;b_{t}-\bar{b}_{t},\;t\in T\}\right] ^{q},  \notag
\end{align}%
where we have used $x\in \mathcal{F}(b)$.

Let us take
\begin{equation*}
-(u+c)=\sum_{t\in T_{0}}\eta _{t}u_{t}
\end{equation*}%
for some finite subset $T_{0}\subset T_{b}(x)$, $u\in \partial f(x),\
u_{t}\in \partial g_{t}(x),$ and some $\eta _{t}>0,\ t\in T_{0}$, satisfying
$\sum_{t\in T_{0}}\eta _{t}\leq M$. Then we have
\begin{eqnarray}
-\langle u+c,x-\bar{x}\rangle  &=&\left\langle \sum_{t\in T_{0}}\eta
_{t}u_{t},x-\bar{x}\right\rangle =\sum_{t\in T_{0}}\eta _{t}\langle u_{t},x-%
\bar{x}\rangle   \notag \\
&\geq &\sum_{t\in T_{0}}\eta _{t}(g_{t}(x)-g_{t}(\bar{x}))\geq \sum_{t\in
T_{0}}\eta _{t}(b_{t}-\bar{b}_{t})  \label{marco9} \\
&\geq &-M\Vert b-\bar{b}\Vert _{\infty },  \notag
\end{eqnarray}%
which implies, from $u\in\partial f(x)$, that
\begin{align}
f(x)-f(\bar{x})+\langle \bar{c},x-\bar{x}\rangle \leq &\langle u+\bar{c},x-%
\bar{x}\rangle =\langle u+c,x-\bar{x}\rangle -\langle c-\bar{c},x-\bar{x}%
\rangle   \notag \\
\leq &M\Vert b-\bar{b}\Vert _{\infty }+\Vert c-\bar{c}\Vert \cdot \Vert x-%
\bar{x}\Vert .  \label{1.56}
\end{align}%
Recalling that $\Vert x-\bar{x}\Vert \leq \delta $ for all $x\in U$, %
\eqref{1.4} and \eqref{1.56} imply
\begin{equation*}
f(x)-f(\bar{x})+\langle \bar{c},x-\bar{x}\rangle \leq (M+\delta )\Vert
(c,b)-(\bar{c},\bar{b})\Vert ,
\end{equation*}%
and therefore \eqref{1.55} yields
\begin{equation}
\tau d(x,\mathcal{S}(\bar{c},\bar{b}))\leq \max \{(M+\delta )^{q},1\}\Vert
(c,b)-(\bar{c},\bar{b})\Vert ^{q},  \label{1.59}
\end{equation}%
whenever $x\in \mathcal{S}(c,b)\cap U\cap \lbrack \bar{f}>0]$. Observe that %
\eqref{1.59} is trivial for $x\in \lbrack \bar{f}\leq 0]=\mathcal{S}(\bar{c},%
\bar{b})$ and, hence, we have established $(i)$.

To finish the proof, we are establishing $(ii)\Rightarrow (iii)$ in the
linear setting. Suppose to the contrary that $\mathcal{L}$ is not $q$-order
calm at $((\bar{c},\bar{b}),\bar{x})$. To reach a contradiction, by
Proposition \ref{P3.2} it suffices to show that the sequence $x^{r}\in
\mathcal{F}(b^{r})$ in Proposition \ref{P3.2} is also contained in $\mathcal{%
S}_{\bar{c}}(b^{r})$, which readily follows from the KKT conditions (\ref%
{3.17}) in the linear setting (by continuity, it is not restrictive to
assume that $P(\bar{c},b^{r})$ satisfies the Slater condition).
\qed\end{proof}

Next we recall the so-called Extended N\"{u}rnberger Condition (ENC) \cite[%
Definition 2.1]{CanHanLopPar08}, which plays a crucial role in the present paper.

\begin{definition}
We say that ENC is satisfied at $((\bar{c},\bar{b}),\bar{x})\in \mathrm{gph}(%
\mathcal{S})$ when
\begin{eqnarray}
&&P(\bar{c},\bar{b})\;\text{satisfies the Slater condition and there is no}%
\;D\subset T_{\bar{b}}(\bar{x})  \notag \\
&&\text{with}\;|D|<n\;\text{such that}\;-(\partial f(\bar{x})+\bar{c}%
)\bigcap \left( \mathrm{cone}\Big(\bigcup_{t\in D}\partial g_{t}(\overline{x}%
)\Big)\right) \neq \emptyset .  \label{marco8}
\end{eqnarray}
\end{definition}

The following lemma is also crucial in our analysis; interested readers are
referred to \cite[Theorem~2.1 and Lemma 3.1]{CanHanLopPar08} for more details.

\begin{lemma}
\label{L6.1} Assume that ENC is satisfied at $((\bar{c},\bar{b}),\bar{x})\in
\mathrm{gph}(\mathcal{S})$. Then the following conditions hold:

$(i)$ $\mathcal{S}$ is single valued and Lipschitz continuous in a
neighborhood of $(\bar{c},\bar{b}).$

$(ii)$ If a sequence $\{((c^{r},b^{r}),x^{r})\}_{r\in \mathbb{N}}\subset
\mathrm{gph}(\mathcal{S})$ converges to $((\bar{c},\bar{b}),\bar{x})$, then $%
(b^{r},x^{r})\in \mathrm{gph}(\mathcal{S}_{\bar{c}})$ for $r$ large enough.
\end{lemma}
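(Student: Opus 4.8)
The plan is to read the ENC condition as a nondegeneracy property that makes $\bar x$ a \emph{strongly unique} (sharp) minimizer of $P(\bar c,\bar b)$ and then to transfer this uniqueness to nearby parameters. First I would record the consequence of ENC for the Karush--Kuhn--Tucker representation guaranteed by Proposition~5.1: by Carath\'eodory's theorem any representation $-(u+\bar c)=\sum_{t}\gamma_t u_t$ with $u\in\partial f(\bar x)$, $u_t\in\partial g_t(\bar x)$, $t\in T_{\bar b}(\bar x)$, can be reduced to one whose active gradients are linearly independent; since ENC forbids any index set of cardinality $<n$, such a minimal representation must involve exactly $n$ active indices with \emph{linearly independent} gradients and \emph{strictly positive} multipliers. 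Consequently the normal cone $N:=\mathrm{cone}\big(\bigcup_{t\in T_{\bar b}(\bar x)}\partial g_t(\bar x)\big)$ is a full-dimensional simplicial cone and $-(u+\bar c)\in\mathrm{int}\,N$.

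For part $(i)$, I would deduce from $-(u+\bar c)\in\mathrm{int}\,N$ that $\bar x$ is a sharp minimizer: there is $\kappa>0$ with $f(x)+\langle\bar c,x\rangle\ge f(\bar x)+\langle\bar c,\bar x\rangle+\kappa\|x-\bar x\|$ for all $x\in\mathcal F(\bar b)$ near $\bar x$, because strict interiority yields $\langle u+\bar c,d\rangle\ge\kappa\|d\|$ for every feasible direction $d$ (an element of the tangent cone, i.e.\ the polar of $N$), and the subgradient inequality for the convex objective propagates this bound. Single-valuedness near $(\bar c,\bar b)$ is then immediate, and Lipschitz continuity follows by the standard estimate: for $(c,b)$ near $(\bar c,\bar b)$ and $x\in\mathcal S(c,b)$ near $\bar x$, I would project $x$ onto $\mathcal F(\bar b)$ using the Hoffman-type Lipschitz behaviour of $\mathcal F$ guaranteed by the (stable) Slater condition, apply the sharp-minimum inequality to the projection, and bound the resulting objective gap by $\|(c,b)-(\bar c,\bar b)\|$, which gives $\|x-\bar x\|\le L\,\|(c,b)-(\bar c,\bar b)\|$.

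For part $(ii)$, the key observation is that strict interiority is an \emph{open} condition that survives the replacement of $c^r$ by $\bar c$. Since $((c^r,b^r),x^r)\to((\bar c,\bar b),\bar x)$, Lemma~\ref{L1.2} provides, for large $r$, multipliers expressing $-(u^r+c^r)$, with $u^r\in\partial f(x^r)$, as a uniformly bounded nonnegative combination of active gradients $u^r_t\in\partial g_t(x^r)$, $t\in T_{b^r}(x^r)$. A filtering argument as in \cite[Lemma~3.1]{CanHanLopPar08}, based on compactness of $T$, lets me extract a fixed set $T_0$ of $n$ indices whose gradients converge to linearly independent gradients at $\bar x$ and whose multipliers converge to strictly positive limits; ENC is exactly what prevents this limiting representation from collapsing to fewer than $n$ indices. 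Hence $-(u^r+c^r)$ lies in the interior of the full-dimensional cone $\mathrm{cone}\big(\bigcup_{t\in T_{b^r}(x^r)}\partial g_t(x^r)\big)$, uniformly bounded away from its boundary for large $r$. Writing $-(u^r+\bar c)=-(u^r+c^r)+(c^r-\bar c)$ and using $\|c^r-\bar c\|\to0$, the perturbed point stays in that interior for $r$ large, so by the KKT characterization (Proposition~5.1, valid since the Slater condition for $P(\bar c,b^r)$ holds by continuity) $x^r\in\mathcal S(\bar c,b^r)$, i.e.\ $(b^r,x^r)\in\mathrm{gph}(\mathcal S_{\bar c})$.

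The main obstacle will be the limiting/filtering argument in part $(ii)$: I must ensure that no generator is lost and no multiplier degenerates to zero in the limit, so that the interior property genuinely passes from $c^r$ to $\bar c$. This is precisely where ENC is indispensable, and where the uniform boundedness from Lemma~\ref{L1.2} together with compactness of $T$ and the upper semicontinuity of the subdifferentials $t\mapsto\partial g_t(x)$ must be combined carefully.
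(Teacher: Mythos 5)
First, a point of comparison: the paper itself contains \emph{no} proof of Lemma~\ref{L6.1} --- it is imported verbatim from \cite[Theorem~2.1 and Lemma~3.1]{CanHanLopPar08} --- so your attempt can only be measured against that cited source, whose mechanism you have in fact largely reconstructed. Your two key observations are correct: (a) ENC together with the conical Carath\'eodory theorem forces any reduced KKT representation at $\bar x$ to use exactly $n$ distinct active indices with linearly independent subgradients and strictly positive multipliers, so $-(u+\bar c)$ is interior to a full-dimensional simplicial subcone, and the resulting inequality $\langle u+\bar c,x-\bar x\rangle\ge\kappa\|x-\bar x\|$ (valid because $x-\bar x$ lies in the polar of the active cone for every $x\in\mathcal{F}(\bar b)$) makes $\bar x$ a sharp minimizer; (b) for part (ii), the filtering argument plus ENC rules out loss of generators, coalescence of distinct indices into one, and vanishing of multipliers in the limit, so the interiority is stable and survives the replacement of $c^r$ by $\bar c$; combined with the Slater condition for $P(\bar c,b^r)$ (valid for large $r$ by compactness of $T$) and the KKT characterization of Section~\ref{S5}, this gives $x^r\in\mathcal{S}_{\bar c}(b^r)$. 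Part (ii) is sound modulo the routine step of converting the subsequence extraction into a statement about the whole tail (argue by contradiction).

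Part (i), however, has genuine gaps as written. ``Single valued in a neighborhood'' includes \emph{nonemptiness} of $\mathcal{S}(c,b)$ for all nearby $(c,b)$, and also that no solutions of perturbed problems sit far from $\bar x$; your sharp-minimum inequality at $\bar x$ says nothing about either, so single-valuedness near $(\bar c,\bar b)$ is not ``immediate''. (The fix is standard but must be said: sharpness makes the level sets of $P(\bar c,\bar b)$ bounded, and Slater-based lower semicontinuity of $\mathcal{F}$ stabilizes this inf-compactness, yielding nonempty argmin sets converging upper semicontinuously to $\{\bar x\}$.) Moreover, your Hoffman-projection estimate only yields $\|x-\bar x\|\le L\|(c,b)-(\bar c,\bar b)\|$, i.e.\ \emph{calmness} of $\mathcal{S}$ at the fixed pair $((\bar c,\bar b),\bar x)$; this neither forces uniqueness (two distinct solutions of $P(c,b)$ can both satisfy that bound) nor gives Lipschitz continuity between two perturbed parameters $(c_1,b_1)$ and $(c_2,b_2)$. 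Both require running your part (ii) machinery at \emph{moving} base points: index stability gives a uniform sharpness constant at the solutions of all nearby problems, whence uniqueness (using also convexity of $\mathcal{S}(c,b)$) and the genuine two-point Lipschitz estimate. So the architecture is the right one --- it is essentially that of \cite{CanHanLopPar08} --- but part (i) as sketched falls short of what the lemma asserts.
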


Thanks to Lemma \ref{L6.1}, we will arrive at the following theorem, which
shows that the parameter $c$ can be considered fixed in our analysis
provided that ENC is fulfilled at $((\bar{c},\bar{b}),\bar{x})\in \mathrm{gph%
}\mathcal{(S)}$.

\begin{theorem}
Let $((\bar{c},\bar{b}),\bar{x})\in \mathrm{gph}(\mathcal{S})$ and suppose
that ENC is satisfied at $((\bar{c},\bar{b}),\bar{x})$. Then
\begin{equation*}
\clmq\mathcal{S}((\bar{c},\bar{b}),\bar{x}) =\clmq\mathcal{S}_{\bar{c}}(\bar{b},\bar{x}).
\end{equation*}
\end{theorem}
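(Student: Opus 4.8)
The plan is to establish the two inequalities $\clmq\mathcal{S}((\bar{c},\bar{b}),\bar{x})\le\clmq\mathcal{S}_{\bar{c}}(\bar{b},\bar{x})$ and $\clmq\mathcal{S}((\bar{c},\bar{b}),\bar{x})\ge\clmq\mathcal{S}_{\bar{c}}(\bar{b},\bar{x})$ separately, working throughout with the sequential (first) form of the modulus in \eqref{1.6}. First I would record the consequences of ENC through Lemma~\ref{L6.1}(i): $\mathcal{S}$ is single valued and Lipschitz continuous in a neighborhood of $(\bar{c},\bar{b})$, so that $\mathcal{S}(\bar{c},\bar{b})=\mathcal{S}_{\bar{c}}(\bar{b})=\{\bar{x}\}$ and both moduli share the common denominator $d(x,\{\bar{x}\})=\|x-\bar{x}\|$. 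Recalling the parameter norm \eqref{1.4}, for any $b\in\mathcal{C}(T,\mathbb{R})$ one has the identity $\|(\bar{c},b)-(\bar{c},\bar{b})\|=\|b-\bar{b}\|_{\infty}$, which will be used to match the two difference quotients when $c$ is frozen at $\bar{c}$.

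For the inequality $\le$ I would observe that the admissible sequences $(b,x)\in\gph(\mathcal{S}_{\bar{c}})$ with $x\to\bar{x}$ are precisely the admissible sequences $((c,b),x)\in\gph(\mathcal{S})$ in which the component $c$ is held fixed at $\bar{c}$; along these the $\mathcal{S}$-quotient $\|(\bar{c},b)-(\bar{c},\bar{b})\|^{q}/\|x-\bar{x}\|$ equals the $\mathcal{S}_{\bar{c}}$-quotient $\|b-\bar{b}\|_{\infty}^{q}/\|x-\bar{x}\|$, by the norm identity above. Since a $\liminf$ taken over a subfamily of admissible sequences can only be larger than the $\liminf$ over the whole family, restricting to $c=\bar{c}$ yields $\clmq\mathcal{S}((\bar{c},\bar{b}),\bar{x})\le\clmq\mathcal{S}_{\bar{c}}(\bar{b},\bar{x})$. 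This direction uses only single-valuedness and requires no further input from ENC.

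The substantive direction is $\ge$, and here Lemma~\ref{L6.1}(ii) does the work. Assuming $\clmq\mathcal{S}((\bar{c},\bar{b}),\bar{x})<+\infty$ (the infinite case being immediate from the $\le$ part), I would select a sequence $((c^{r},b^{r}),x^{r})\in\gph(\mathcal{S})$ converging to $((\bar{c},\bar{b}),\bar{x})$ with $x^{r}\neq\bar{x}$ along which $\|(c^{r},b^{r})-(\bar{c},\bar{b})\|^{q}/\|x^{r}-\bar{x}\|\to\clmq\mathcal{S}((\bar{c},\bar{b}),\bar{x})$, which exists by the definition of $\liminf$. By Lemma~\ref{L6.1}(ii), for all $r$ large enough $(b^{r},x^{r})\in\gph(\mathcal{S}_{\bar{c}})$, so $(b^{r},x^{r})$ is admissible for the modulus of $\mathcal{S}_{\bar{c}}$. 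Using $\|(c^{r},b^{r})-(\bar{c},\bar{b})\|=\max\{\|c^{r}-\bar{c}\|,\|b^{r}-\bar{b}\|_{\infty}\}\ge\|b^{r}-\bar{b}\|_{\infty}$, each $\mathcal{S}$-quotient dominates the corresponding $\mathcal{S}_{\bar{c}}$-quotient. Passing to the limit on the left and to the $\liminf$ on the right, and bounding the latter below by $\clmq\mathcal{S}_{\bar{c}}(\bar{b},\bar{x})$ (the $\liminf$ along a particular admissible sequence always dominates the global one), gives $\clmq\mathcal{S}((\bar{c},\bar{b}),\bar{x})\ge\clmq\mathcal{S}_{\bar{c}}(\bar{b},\bar{x})$. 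Combining the two bounds yields the asserted equality.

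The one point requiring care is the qualitative heart of the argument: that perturbing $c$ away from $\bar{c}$ cannot produce solutions closer to $\bar{x}$ per unit of parameter perturbation than those already obtained with $c$ frozen. This is exactly the content of Lemma~\ref{L6.1}(ii) --- under ENC, every solution near $\bar{x}$ arising from a nearby perturbed pair $(c^{r},b^{r})$ is already a solution of the problem with $c=\bar{c}$ and right-hand side $b^{r}$ --- so I expect the main obstacle to be conceptual rather than computational, and it is dispatched by invoking that lemma together with the elementary norm inequality $\|(c,b)-(\bar{c},\bar{b})\|\ge\|b-\bar{b}\|_{\infty}$.
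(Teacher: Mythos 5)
Your proof is correct and follows essentially the same route as the paper's: both establish the inequality $\clmq\mathcal{S}((\bar{c},\bar{b}),\bar{x})\ge\clmq\mathcal{S}_{\bar{c}}(\bar{b},\bar{x})$ by taking a minimizing sequence for $\clmq\mathcal{S}$ (available via Lemma~\ref{L6.1}(i)), transferring it into $\gph(\mathcal{S}_{\bar{c}})$ via Lemma~\ref{L6.1}(ii), and applying the norm inequality $\|(c,b)-(\bar{c},\bar{b})\|\ge\|b-\bar{b}\|_{\infty}$, while the reverse inequality is the same purely definitional subfamily observation the paper uses. Your writeup is slightly more explicit than the paper's (separating the infinite case and spelling out why the minimizing sequence has $x^{r}\neq\bar{x}$), but the decomposition and the key lemma are identical.
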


\begin{proof}
According to Lemma \ref{L6.1}(i), we have
\begin{equation}
\clmq\mathcal{S}((\bar{c},\bar{b}),\bar{x}) =\lim_{r\rightarrow
+\infty }\frac{\Vert (c^{r},b^{r})-(\bar{c},\bar{b})\Vert ^{q}}{d(x^{r},%
\mathcal{S}(\bar{c},\bar{b}))},  \label{3.77}
\end{equation}%
for certain sequences $(c^{r},b^{r})\rightarrow(\bar{c},\bar{b})$ and $\{x^{r}\}=\mathcal{S}%
(c^{r},b^{r})$ with $x^{r}\neq \bar{x}$ and $x^{r}\rightarrow \bar{x}.$
By Lemma %
\ref{L6.1}(ii), we have
\begin{equation*}
\{x^{r}\}=\mathcal{S}_{\bar{c}}(b^{r})\quad \text{for }r\text{ large enough.}
\end{equation*}%
Therefore, \eqref{3.77} and the obvious consequence of (\ref{1.4})
\begin{equation*}
\Vert (c^{r},b^{r})-(\bar{c},\bar{b})\Vert \geq \Vert b^{r}-\bar{b}\Vert
_{\infty },
\end{equation*}%
ensure
\begin{equation*}
\clmq\mathcal{S}((\bar{c},\bar{b}),\bar{x})\geq
\liminf_{r\rightarrow +\infty }\frac{\Vert b^{r}-\bar{b}\Vert _{\infty }^{q}%
}{d(x^{r},\mathcal{S}(\bar{c},\bar{b}))}\geq \clmq\mathcal{S}_{\bar{c}}(\bar{b},\bar{x}).
\end{equation*}%
Since it readily follows from the definitions that $\clmq\mathcal{S}_{\bar{c}}(\bar{b},\bar{x})\geq \clmq\mathcal{S}((\bar{c},\bar{b}),\bar{x})$, the previous lower limit must be an ordinary limit and we
can obtain indeed
\begin{equation*}
\clmq\mathcal{S}((\bar{c},\bar{b}),\bar{x}) =\clmq\mathcal{S}_{\bar{c}}(\bar{b},\bar{x}) =\lim_{r\rightarrow +\infty }\frac{%
\Vert b^{r}-\bar{b}\Vert _{\infty }^{q}}{d(x^{r},\mathcal{S}(\bar{c},\bar{b}%
))}.
\end{equation*}%
The proof is complete.
\qed\end{proof}

In what follows, particularly in Theorem \ref{T6.02}, we consider a rather
weaker condition than ENC and then provide an upper estimate for $\mathrm{clm}_q[\mathcal{S},((\bar{c},\bar{b}),\bar{x})].$ To this aim, we associate
with $(b,x)\in \gph(\mathcal{S}_{\bar{c}})$ the family of KKT
subsets of $T$ given by
\begin{equation*}
\mathcal{K}_{b}(x):=\{D\subset T_{b}(x)\mid \left\vert D\right\vert \leq n\;%
\text{and}\;-(u+\bar{c})\in \mathrm{cone}\{\partial g_{t}(x),t\in D\}\;\text{%
for\ some}\;u\in \partial f(x)\}.
\end{equation*}%
For any $D\in \mathcal{K}_{\bar{b}}(\bar{x})$, we consider the supremum
function $f_{D}:\mathbb{R}^{n}\rightarrow \mathbb{R}$ given by
\begin{align}
f_{D}(x) :=&\sup \{g_{t}(x)-\bar{b}_{t},t\in T;\;-g_{t}(x)+\bar{b}_{t},t\in
D\}  \notag \\
=&\sup \{g_{t}(x)-\bar{b}_{t},\ t\in T\setminus D;\;|g_{t}(x)-\bar{b}%
_{t}|,\ t\in D\}.  \label{6.08}
\end{align}%
$\mathcal{K}_{b}(x)$ and\textbf{\ }$f_{D}(x)$ are convex counterparts of the
corresponding concepts in \cite[Section 3]{CanKruLopParThe14} for the linear model.

\begin{theorem}
\label{T6.02} Let $\mathcal{S}(\bar c, \bar b)=\{\bar x\}$ and assume that $%
P(\bar c, \bar b)$ satisfies the Slater condition. Then the following
estimate holds
\begin{equation*}
\liminf_{\substack{ b\rightarrow \bar{b}  \\ x\rightarrow \bar{x},\;x\in
\mathcal{F}(b)}}\frac{\|b-\bar b\|_\infty^q}{d(x, \mathcal{S}_{\bar c}(\bar
b))}\leq \inf_{D\in \mathcal{K}_{\bar b}(\bar x)}\liminf_{\substack{ x\to
\bar x  \\ f_D(x)>0}}f_D(x)^{q-1}d(0, \partial f_D(x)).
\end{equation*}
\end{theorem}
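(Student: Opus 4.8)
The plan is to prove the inequality by producing, for each admissible index set $D\in\mathcal{K}_{\bar b}(\bar x)$, one sequence of canonical perturbations along which the left-hand ratio is dominated by the right-hand quantity attached to $f_D$, and then to take the infimum over $D$. First I would record the simplification coming from the standing hypothesis $\mathcal{S}(\bar c,\bar b)=\{\bar x\}$: since $\mathcal{S}_{\bar c}(\bar b)=\mathcal{S}(\bar c,\bar b)=\{\bar x\}$, we have $d(x,\mathcal{S}_{\bar c}(\bar b))=\|x-\bar x\|$ for every $x$, so the left-hand side is simply $\liminf \|b-\bar b\|_\infty^q/\|x-\bar x\|$ taken over $b\to\bar b$, $x\to\bar x$, $x\in\mathcal{F}(b)$.

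Fix $D\in\mathcal{K}_{\bar b}(\bar x)$ (if $\mathcal{K}_{\bar b}(\bar x)=\emptyset$ the right-hand side is $+\infty$ and there is nothing to prove). We may assume the inner $\liminf$ on the right is a finite number $\ell_D$, and choose a sequence $x^r\to\bar x$ with $f_D(x^r)>0$ and $f_D(x^r)^{q-1}d(0,\partial f_D(x^r))\to\ell_D$. The key construction is the \emph{cheapest feasibility perturbation}: put $b^r_t:=\max\{g_t(x^r),\bar b_t\}$ for $t\in T$. Joint continuity of $(t,x)\mapsto g_t(x)$ gives $b^r\in\mathcal{C}(T,\mathbb{R})$, and by construction $g_t(x^r)\le b^r_t$, i.e. $x^r\in\mathcal{F}(b^r)$. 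Since $b^r_t-\bar b_t=[g_t(x^r)-\bar b_t]_+$, we get $\|b^r-\bar b\|_\infty=[\sup_{t\in T}(g_t(x^r)-\bar b_t)]_+$; because every term $g_t-\bar b_t$ ($t\in T$) occurs under the supremum defining $f_D$ (directly for $t\in T\setminus D$, dominated by $|g_t-\bar b_t|$ for $t\in D$), this yields the norm bound
\[
\|b^r-\bar b\|_\infty\le f_D(x^r).
\]
Continuity of $f_D$ together with $f_D(\bar x)=0$ (which holds since $D\subset T_{\bar b}(\bar x)$, so $g_t(\bar x)=\bar b_t$ on $D$ while $g_t(\bar x)\le\bar b_t$ on $T$) forces $b^r\to\bar b$, and $f_D(x^r)>0=f_D(\bar x)$ forces $x^r\neq\bar x$.

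The heart of the argument is the subgradient-type estimate
\[
f_D(x^r)\le\|x^r-\bar x\|\,d(0,\partial f_D(x^r)),
\]
which, combined with the norm bound and $q\le1$, gives
\[
\frac{\|b^r-\bar b\|_\infty^q}{\|x^r-\bar x\|}\le\frac{f_D(x^r)^q}{\|x^r-\bar x\|}=f_D(x^r)^{q-1}\frac{f_D(x^r)}{\|x^r-\bar x\|}\le f_D(x^r)^{q-1}d(0,\partial f_D(x^r)).
\]
Letting $r\to\infty$ along this admissible sequence bounds the left-hand $\liminf$ by $\ell_D$, and taking the infimum over $D\in\mathcal{K}_{\bar b}(\bar x)$ completes the proof.

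The main obstacle is establishing the displayed subgradient inequality, and this is precisely where the convex setting is harder than the linear one: because $f_D$ carries the reversed terms $\bar b_t-g_t$ ($t\in D$), it is in general \emph{not} convex, so the one-line convexity estimate available in the linear model (where $f_D$ is a maximum of affine functions) no longer applies verbatim. I would split according to which term attains $f_D(x^r)$. When it is a constraint term $g_t(x^r)-\bar b_t$, then near $x^r$ the function $f_D$ coincides with the convex function $\sup_{s\in T}(g_s-\bar b_s)$, whose global minimum value $0$ is attained at $\bar x$, and the inequality is the standard convex subgradient estimate applied between $x^r$ and $\bar x$. When it is a reversed term $\bar b_t-g_t(x^r)$, the minimal-norm Fr\'echet subgradient of $f_D$ at $x^r$ is (asymptotically) $\nabla g_t(x^r)\to\nabla g_t(\bar x)$, while convexity of $g_t$ at $\bar x$ gives $\bar b_t-g_t(x^r)=g_t(\bar x)-g_t(x^r)\le d(0,\partial g_t(\bar x))\,\|x^r-\bar x\|$, so the inequality holds in the limit; points where the active $g_t$ fails to be differentiable render $\partial f_D(x^r)$ empty there, whence $d(0,\partial f_D(x^r))=+\infty$, and such points are harmless since they cannot sit on a sequence realizing the finite value $\ell_D$.
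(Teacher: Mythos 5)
Your skeleton is the same as the paper's: fix $D\in\mathcal{K}_{\bar b}(\bar x)$, choose $x^r\to\bar x$ with $f_D(x^r)>0$ realizing the inner liminf, build perturbations $b^r\to\bar b$ with $x^r\in\mathcal{F}(b^r)$ and $\|b^r-\bar b\|_\infty$ controlled by $f_D(x^r)$, and finish with a lower bound for $\|x^r-\bar x\|$. Your perturbation $b^r_t=\max\{g_t(x^r),\bar b_t\}$ is simpler than the paper's Urysohn-based construction and is perfectly adequate for this theorem (the paper's heavier construction additionally forces $D\subset T_{b^r}(x^r)$, which is irrelevant here but is reused for the linear-programming corollary that follows). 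The genuine gap is exactly at what you call the heart of the argument: the estimate $f_D(x^r)\le\|x^r-\bar x\|\,d(0,\partial f_D(x^r))$ is false in the convex setting, and your case analysis does not establish it, so the displayed chain of inequalities does not go through.

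Concrete failure: take $n=1$, $T=\{t_1\}$, $g_{t_1}(x)=x^2-x$, $\bar b_{t_1}=0$, $f\equiv 0$, $\bar c=1$, so that $P(\bar c,\bar b)$ minimizes $x$ over $[0,1]$, $\bar x=0$, the Slater condition holds, and $D=\{t_1\}\in\mathcal{K}_{\bar b}(\bar x)$ because $-1\in\mathrm{cone}\{g_{t_1}'(0)\}=\mathrm{cone}\{-1\}$. Then $f_D(x)=|x^2-x|$, and for every $x\in(0,\tfrac12)$ one has $f_D(x)=x(1-x)$ and $d(0,\partial f_D(x))=1-2x$, hence $f_D(x)>\|x-\bar x\|\,d(0,\partial f_D(x))$, the deficit being exactly $x^2$; moreover, for $q=1$ the inner liminf for this $D$ equals $1$ and is realized along $x\downarrow 0$, i.e.\ the inequality fails at every point of the very sequence you must use. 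Your Case 2 in fact proves the estimate only with $d(0,\partial g_t(\bar x))$ in place of $d(0,\partial f_D(x^r))$; ``passing to the limit'' does not repair this, because your chain needs the inequality for each $r$, and for $q<1$ the discrepancy gets multiplied by $f_D(x^r)^{q-1}\to+\infty$, which you never control (one would have to show separately that for $q<1$ your Case 2 is incompatible with finiteness of $\ell_D$). Case 1 needs \emph{strict} dominance of the direct terms for the local identification of $f_D$ with the convex function $\sup_{s\in T}(g_s-\bar b_s)$ (mere attainment of the sup is not enough), tie points --- where the Fr\'echet subdifferential of a max of a convex and a concave piece can contain arbitrarily short vectors --- are not treated at all, and $\bar x$ is not a global minimizer of $\sup_{s\in T}(g_s-\bar b_s)$ under the Slater condition (you only need that this sup is $\le 0$ at $\bar x$, a harmless slip). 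For what it is worth, you have put your finger on a step the paper itself glosses over: its proof derives the same lower bound from $\langle u^r,x^r-\bar x\rangle\ge f_D(x^r)-f_D(\bar x)$ for $u^r\in\partial f_D(x^r)$, i.e.\ from the convex subgradient inequality applied to $f_D$, which is legitimate only when $f_D$ is convex --- true in the linear case of the subsequent corollary, but false in general, as the example above shows. So your diagnosis of the difficulty is sharper than the paper's treatment, but your repair does not close it, and the proof as proposed is incomplete.
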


\begin{proof}The proof is based on similar arguments to those used in the
proof of \cite[Theorem 6]{CanKruLopParThe14}.
Picking a fixed $D\in \mathcal{K}%
_{\bar{b}}(\bar{x})$,
%AK20/04/18
%and
let us show that
\begin{equation}
\liminf_{\substack{ b\rightarrow \bar{b} \\ x\rightarrow \bar{x},\;x\in
\mathcal{F}(b)}}\frac{\Vert b-\bar{b}\Vert _{\infty }^{q}}{d(x,\mathcal{S}_{%
\bar{c}}(\bar{b}))}\leq \liminf_{\substack{ x\rightarrow \bar{x} \\ %
f_{D}(x)>0}}f_{D}(x)^{q-1}d(0,\partial f_{D}(x)).  \label{2.66}
\end{equation}%
We have
\begin{equation*}
\liminf_{\substack{ x\rightarrow \bar{x} \\ f_{D}(x)>0}}f_{D}(x)^{q-1}d(0,%
\partial f_{D}(x))=\lim_{r\rightarrow +\infty
}f_{D}(x^{r})^{q-1}d(0,\partial f_{D}(x^{r}))
\end{equation*}%
for a certain sequence $\{x^{r}\}_{r\in \mathbb{N}}$ such that $%
\lim_{r\rightarrow +\infty }x^{r}=\bar{x}$ and $f_{D}(x^{r})>0$ for all $%
r\in \mathbb{N}$. Obviously, $x^{r}\notin \mathcal{S}_{\bar{c}}(\bar{b})$
since $\mathcal{S}_{\bar{c}}(\bar{b})=\{\bar{x}\}$ and $f_{D}(\bar{x})=0$.
Note that $d(0,\partial f_{D}(x^{r}))>0$ since $x^{r}\notin$ {argmin}$_{x\in \mathbb{R}^{n}}f_{D}$.

To prove \eqref{2.66}, we need to build a new sequence of parameters $%
\{b^{r}\}_{r\in \mathbb{N}}\subset C(T,\mathbb{R}^{n})$ converging to $\bar{b%
}$ such that
\begin{equation}
x^{r}\in \mathcal{F}(b^{r})\quad \text{and}\quad \frac{\Vert b^{r}-\bar{b}%
\Vert _{\infty }^{q}}{\Vert x^{r}-\bar{x}\Vert }\leq \left( 1+\frac{1}{r}%
\right) ^{q} f_{D}(x^{r})^{q-1}d(0,\partial f_{D}(x^{r})).  \label{2.68}
\end{equation}%
First we give a lower bound for $\Vert x^{r}-\bar{x}\Vert$. If $u^{r}%
\in\partial f_{D}(x^{r})$, $u^{r}\neq0_{n},$ and
\[
\Vert x^{r}-\bar{x}\Vert\Vert u^{r}\Vert\geq\langle u^{r},x^{r}-\bar{x}%
\rangle\geq f_{D}(x^{r})-f_{D}(\bar{x})=f_{D}(x^{r}),
\]
and so
\begin{equation}
\Vert x^{r}-\bar{x}\Vert\geq\frac{f_{D}(x^{r})}{\Vert u^{r}\Vert}\geq
\frac{f_{D}(x^{r})}{d(0,\partial f_{D}(x^{r}))}.\label{2.70}%
\end{equation}
The next step consists of the construction of the desired sequence $\{b^{r}\}$
such that \eqref{2.68} holds. Once again we apply Urysohn's Lemma which guarantees
the existence of a certain function $\varphi_{r}\in C(T,[0,1])$ such that
\begin{equation}
\varphi_{r}(t)=\left\{
\begin{array}
[c]{ll}%
0, & \text{if}\;t\in D\\
1, & \text{if}\;g_{t}(x^{r})-\bar{b}_{t}\leq-(1+\frac{1}{r})f_{D}(x^{r}).
\end{array}
\right.  \label{5.12}%
\end{equation}
Recalling the definition of $f_{D}(x^{r})$ and the fact that $f_{D}(x^{r})>0$,
$D$ and $\{t\in T:\;g_{t}(x^{r})-\bar{b}_{t}\leq-(1+\frac{1}{r})f_{D}%
(x^{r})\}$ are disjoint closed sets in $T$. Certainly, if $t$ belongs to both
sets we reach the following contradiction:%
\[
f_{D}(x^{r})\geq\left\vert g_{t}(x^{r})-\bar{b}_{t}\right\vert \geq(1+\frac
{1}{r})f_{D}(x^{r}).
\]
If the set\textbf{ }$\{t\in T\mid g_{t}(x^{r})-\bar{b}_{t}\leq-(1+\frac{1}%
{r})f_{D}(x^{r})\}$ is empty we take\textbf{ }$\varphi_{r}(t)=0$ for all $t\in
T.$ Now, let us define, for each $t\in T$,
\[
b_{t}^{r}:=(1-\varphi_{r}(t))g_{t}(x^{r})+\varphi_{r}(t)(\bar{b}_{t}%
+f_{D}(x^{r})).
\]
For each $r$, the definition of $b^{r}$ and \eqref{6.08} clearly imply that
\[
b_{t}^{r}-g_{t}(x^{r})=\varphi_{r}(t)(f_{D}(x^{r})+\bar{b}_{t}-g_{t}%
(x^{r}))\geq0
\]
and thus $x^{r}\in\mathcal{F}(b^{r})$. Finally, let us observe that $b_{t}%
^{r}-\bar{b}_{t}=f_{D}(x^{r})$ when $\varphi_{r}(t)=1,$ and
\[
-(1+\frac{1}{r})f_{D}(x^{r})<g_{t}(x^{r})-\bar{b}_{t}\leq f_{D}(x^{r})
\]
when $\varphi_{r}(t)<1$. Accordingly,
\[
\Vert b^{r}-\bar{b}\Vert_{\infty}\leq(1+\frac{1}{r})f_{D}(x^{r})
\]
which, together with \eqref{2.70}, entails
\[
\frac{\Vert b^{r}-\bar{b}\Vert_{\infty}^{q}}{\Vert x^{r}-\bar{x}\Vert}%
\leq\frac{d(0,\partial f_{D}(x^{r}))}{f_{D}(x^{r})}\Vert b^{r}-\bar{b}%
\Vert_{\infty}^{q}\leq\left(  1+\frac{1}{r}\right)  ^{q}f_{D}(x^{r}%
)^{q-1}d(0,\partial f_{D}(x^{r})),
\]
which ensures \eqref{2.68}.
\qed\end{proof}

Finally, we will consider the linear counterpart of $P(c,b)$; namely, we
will always assume that $f=0$ and $g_{t}(x)=\langle a_{t},x\rangle $ for all
$t\in T$ therein, where $t\mapsto a_{t}\in \mathbb{R}^{n}$ is continuous on $%
T$.

\begin{corollary}
Let $\mathcal{S}(\bar c,\bar b)=\{\bar x\}$ and assume that $P(\bar c, \bar
b)$ satisfies the Slater condition. Then the following estimates hold
\begin{equation}
\clmq\mathcal{S}((\bar c,\bar b),\bar x)\leq \clmq\mathcal{S}_{\bar c}(\bar b,\bar x)\leq \inf_{D\in \mathcal{K}_{\bar
b}(\bar x)}\liminf_{\substack{ x\to \bar x  \\ f_D(x)>0}}f_D(x)^{q-1}d(0,
\partial f_D(x)).  \label{6.21}
\end{equation}
\end{corollary}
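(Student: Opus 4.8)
The plan is to obtain the two inequalities in \eqref{6.21} separately: the left one straight from the definition of the calmness modulus, and the right one by revisiting the construction already carried out in the proof of Theorem~\ref{T6.02} and upgrading the feasible points produced there to genuine minimizers, which is possible precisely because we are now in the linear setting.

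For the left-hand inequality $\clmq\mathcal{S}((\bar c,\bar b),\bar x)\le\clmq\mathcal{S}_{\bar c}(\bar b,\bar x)$ I would argue directly from \eqref{1.6}. Restricting the perturbations in the $\liminf$ defining $\clmq\mathcal{S}((\bar c,\bar b),\bar x)$ to those of the form $(\bar c,b)$ only shrinks the family of admissible sequences, so the full $\liminf$ is no larger than the restricted one; moreover, under the norm \eqref{1.4} one has $\|(\bar c,b)-(\bar c,\bar b)\|=\|b-\bar b\|_\infty$, and for such perturbations $x\in\mathcal{S}(\bar c,b)=\mathcal{S}_{\bar c}(b)$. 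Hence the restricted $\liminf$ is exactly $\clmq\mathcal{S}_{\bar c}(\bar b,\bar x)$, and the inequality follows (this is the same elementary observation already used before the ENC theorem).

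For the right-hand inequality the decisive point is that, in the linear case, the feasible points $x^r$ built in the proof of Theorem~\ref{T6.02} are in fact optimal, i.e. $x^r\in\mathcal{S}_{\bar c}(b^r)$. I would fix $D\in\mathcal{K}_{\bar b}(\bar x)$ and take the sequences $\{x^r\}$, $\{b^r\}$ from that proof, which satisfy $x^r\in\mathcal{F}(b^r)$, $x^r\neq\bar x$, and the estimate \eqref{2.68}. Since $f=0$ and $g_t(x)=\langle a_t,x\rangle$, membership $D\in\mathcal{K}_{\bar b}(\bar x)$ means exactly that $D\subset T_{\bar b}(\bar x)$, $|D|\le n$ and $-\bar c\in\mathrm{cone}\{a_t:t\in D\}$. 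By the Urysohn construction \eqref{5.12} one has $\varphi_r(t)=0$ for $t\in D$, whence $b^r_t=g_t(x^r)$ and therefore $D\subset T_{b^r}(x^r)$. Consequently $-\bar c\in\mathrm{cone}\{a_t:t\in D\}\subset\mathrm{cone}\{a_t:t\in T_{b^r}(x^r)\}$, which, since $P(\bar c,b^r)$ inherits the Slater condition for large $r$ by continuity, is the KKT condition for $P(\bar c,b^r)$; by the KKT characterization stated at the start of the section this yields $x^r\in\mathcal{S}_{\bar c}(b^r)$.

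Once this is in hand, I would use $\mathcal{S}_{\bar c}(\bar b)=\{\bar x\}$, so that $d(x^r,\mathcal{S}_{\bar c}(\bar b))=\|x^r-\bar x\|$ and the pairs $(b^r,x^r)$ become admissible in the $\liminf$ defining $\clmq\mathcal{S}_{\bar c}(\bar b,\bar x)$. Passing to the limit in \eqref{2.68} then gives
\[
\clmq\mathcal{S}_{\bar c}(\bar b,\bar x)\le\liminf_{r\to\infty}\frac{\|b^r-\bar b\|_\infty^q}{\|x^r-\bar x\|}\le\liminf_{\substack{x\to\bar x\\ f_D(x)>0}}f_D(x)^{q-1}d(0,\partial f_D(x)),
\]
the last step holding because $(1+\tfrac1r)^q\to1$ and $\{x^r\}$ was chosen in the proof of Theorem~\ref{T6.02} to attain that $\liminf$; taking the infimum over $D\in\mathcal{K}_{\bar b}(\bar x)$ closes the argument. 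The main obstacle is exactly this upgrade from feasibility to optimality: as the earlier Remark shows through its convex example (where $x^r\in\mathcal{F}(b^r)$ but $x^r\notin\mathcal{S}_{\bar c}(b^r)$), the step fails in the genuinely convex setting, and it works here only because the active cone $\mathrm{cone}\{a_t:t\in D\}$ is independent of the base point, so the optimality certificate encoded by $D$ transfers verbatim from $\bar x$ to every $x^r$.
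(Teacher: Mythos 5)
Your proof is correct and takes essentially the same route as the paper: the first inequality comes straight from the definition \eqref{1.6}, and the second from the observation that the feasible sequence $x^{r}\in\mathcal{F}(b^{r})$ built in the proof of Theorem~\ref{T6.02} satisfies $D\subset T_{b^{r}}(x^{r})$ (since $\varphi_{r}\equiv 0$ on $D$ in \eqref{5.12}), so that in the linear setting the point-independent certificate $-\bar{c}\in\mathrm{cone}\{a_{t}:t\in D\}$ gives $D\in\mathcal{K}_{b^{r}}(x^{r})$ and hence $x^{r}\in\mathcal{S}_{\bar{c}}(b^{r})$. Your extra care in justifying the optimality upgrade (KKT sufficiency and the inheritance of the Slater condition) only makes explicit what the paper leaves implicit.
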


\begin{proof}The first inequality follows straightforwardly from %
\eqref{1.6}. To prove \eqref{6.21}, by Theorem \ref{T6.02} it suffices to
show
%AK20/04/18
that
the sequence $x^{r}\in \mathcal{F}(b^{r})$ produced in Theorem \ref%
{T6.02} is also contained in $\mathcal{S}_{\bar{c}}(b^{r})$. Taking a fixed $%
D$ as in Theorem \ref{T6.02}, since for all $t\in D$, by \eqref{5.12} we
could have $\varphi _{r}(t)=0$, which follows from the definition of $%
b_{t}^{r}$ that $b_{t}^{r}=g_{t}(x^{r})$ and then implies that $D\subset
T_{b^{r}}(x^{r})$. Noting that $f$ and $g_{t}$ are linear functions, we
obtain $D\in \mathcal{K}_{b^{r}}(x^{r})$. Recalling that $x^{r}\in \mathcal{F%
}(b^{r})$, this certainly yields that $x^{r}\in \mathcal{S}_{\bar{c}}(b^{r})$%
.\qed\end{proof}

\section*{Acknowledgement}

The authors would like to thank Prof.
%AK5/06/18
{Xi Yin}
Zheng for his valuable comments which
helped us
improve the presentation of the paper
{and Dr Li Minghua who has read the preliminary version of the manuscript and found a mistake in one of the proofs}.
%AK24/05/18
%in particular we are indebted to him for his powerful chain rule \cite[Proposition 2.1]{YaoZhe16} leading us to refine Theorem
%AK3/04/18
%\ref{C3.1}
%\ref{T3.7}
%significantly.
%AK10/06/18
We are grateful to PhD students Bui Thi Hoa and Nguyen Duy Cuong from Federation University Australia for carefully reading the manuscript, eliminating numerous glitches and
{contributing to Example~\ref{E3.20}.}

\addcontentsline{toc}{section}{References}

%\bibliographystyle{spmpsci}
%\bibliography{buch-kr,kruger,kr-tmp}

\begin{thebibliography}{10}
\providecommand{\url}[1]{{#1}}
\providecommand{\urlprefix}{URL }
\expandafter\ifx\csname urlstyle\endcsname\relax
  \providecommand{\doi}[1]{DOI~\discretionary{}{}{}#1}\else
  \providecommand{\doi}{DOI~\discretionary{}{}{}\begingroup
  \urlstyle{rm}\Url}\fi

\bibitem{Aze03}
Az{\'e}, D.: A survey on error bounds for lower semicontinuous functions.
\newblock In: Proceedings of 2003 {MODE}-{SMAI} {C}onference, \emph{ESAIM
  Proc.}, vol.~13, pp. 1--17. EDP Sci., Les Ulis (2003)

\bibitem{AzeCor04}
Az{\'e}, D., Corvellec, J.N.: Characterizations of error bounds for lower
  semicontinuous functions on metric spaces.
\newblock ESAIM Control Optim. Calc. Var. \textbf{10}(3), 409--425 (2004)

\bibitem{BorZhu05}
Borwein, J.M., Zhu, Q.J.: Techniques of Variational Analysis.
\newblock Springer, New York (2005)

\bibitem{CanHanLopPar08}
C{\'a}novas, M.J., Hantoute, A., L{\'o}pez, M.A., Parra, J.: Stability of
  indices in the {KKT} conditions and metric regularity in convex semi-infinite
  optimization.
\newblock J. Optim. Theory Appl. \textbf{139}(3), 485--500 (2008).
\newblock \doi{10.1007/s10957-008-9407-1}

\bibitem{CanHanParTol14}
C{\'a}novas, M.J., Hantoute, A., Parra, J., Toledo, F.J.: Calmness of the
  argmin mapping in linear semi-infinite optimization.
\newblock J. Optim. Theory Appl. \textbf{160}(1), 111--126 (2014).
\newblock \doi{10.1007/s10957-013-0371-z}

\bibitem{CanKruLopParThe14}
C\'anovas, M.J., Kruger, A.Y., L\'opez, M.A., Parra, J., Th\'era, M.A.:
  Calmness modulus of linear semi-infinite programs.
\newblock SIAM J. Optim. \textbf{24}(1), 29--48 (2014)

\bibitem{Cla83}
Clarke, F.H.: Optimization and Nonsmooth Analysis.
\newblock John Wiley \& Sons Inc., New York (1983)

\bibitem{ClaLedSteWol98}
Clarke, F.H., Ledyaev, Y.S., Stern, R.J., Wolenski, P.R.: Nonsmooth Analysis
  and Control Theory, \emph{Graduate Texts in Mathematics}, vol. 178.
\newblock Springer, New York (1998)

\bibitem{DonRoc04}
Dontchev, A.L., Rockafellar, R.T.: Regularity and conditioning of solution
  mappings in variational analysis.
\newblock Set-Valued Anal. \textbf{12}(1-2), 79--109 (2004)

\bibitem{DonRoc14}
Dontchev, A.L., Rockafellar, R.T.: Implicit Functions and Solution Mappings. A
  View from Variational Analysis, 2 edn.
\newblock Springer Series in Operations Research and Financial Engineering.
  Springer, New York (2014)

\bibitem{Eke74}
Ekeland, I.: On the variational principle.
\newblock J. Math. Anal. Appl. \textbf{47}, 324--353 (1974)

\bibitem{Fab89}
Fabian, M.: Subdifferentiability and trustworthiness in the light of a new
  variational principle of {B}orwein and {P}reiss.
\newblock Acta Univ. Carolinae \textbf{30}, 51--56 (1989)

\bibitem{FabHenKruOut10}
Fabian, M.J., Henrion, R., Kruger, A.Y., Outrata, J.V.: Error bounds: necessary
  and sufficient conditions.
\newblock Set-Valued Var. Anal. \textbf{18}(2), 121--149 (2010)

\bibitem{FabHenKruOut12}
Fabian, M.J., Henrion, R., Kruger, A.Y., Outrata, J.V.: About error bounds in
  metric spaces.
\newblock In: D.~Klatte, H.J. L\"uthi, K.~Schmedders (eds.) Operations Research
  Proceedings 2011. Selected papers of the Int. Conf. Operations Research (OR
  2011), August 30 -- September 2, 2011, Zurich, Switzerland, pp. 33--38.
  Springer-Verlag, Berlin (2012)

\bibitem{GayGeoJea11}
Gaydu, M., Geoffroy, M.H., Jean-Alexis, C.: Metric subregularity of order {$q$}
  and the solving of inclusions.
\newblock Cent. Eur. J. Math. \textbf{9}(1), 147--161 (2011).
\newblock \doi{10.2478/s11533-010-0087-3}

\bibitem{GobLop98}
Goberna, M.A., L{\'o}pez, M.A.: Linear Semi-Infinite Optimization.
\newblock John Wiley \& Sons Ltd., Chichester (1998)

\bibitem{HirLem93}
Hiriart-Urruty, J.B., Lemar{\'e}chal, C.: Convex Analysis and Minimization
  Algorithms. {I}: Fundamentals, \emph{Grundlehren der Mathematischen
  Wissenschaften [Fundamental Principles of Mathematical Sciences]}, vol. 305.
\newblock Springer, Berlin (1993)

\bibitem{Iof79}
Ioffe, A.D.: Regular points of {L}ipschitz functions.
\newblock Trans. Amer. Math. Soc. \textbf{251}, 61--69 (1979)

\bibitem{Iof98}
Ioffe, A.D.: Fuzzy principles and characterization of trustworthiness.
\newblock Set-Valued Anal. \textbf{6}, 265--276 (1998)

\bibitem{Iof00}
Ioffe, A.D.: Metric regularity and subdifferential calculus.
\newblock Uspekhi Mat. Nauk \textbf{55}(3(333)), 103--162 (2000).
\newblock In Russian

\bibitem{Iof17}
Ioffe, A.D.: Variational Analysis of Regular Mappings. Theory and Applications.
\newblock Springer Monographs in Mathematics. Springer (2017)

\bibitem{IofTik79}
Ioffe, A.D., Tikhomirov, V.M.: Theory of Extremal Problems, \emph{Studies in
  Mathematics and Its Applications}, vol.~6.
\newblock North-Holland Publishing Co., Amsterdam (1979)

\bibitem{Jou00}
Jourani, A.: Hoffman's error bound, local controllability, and sensitivity
  analysis.
\newblock SIAM J. Control Optim. \textbf{38}(3), 947--970 (2000)

\bibitem{JouYe05}
Jourani, A., Ye, J.J.: Error bounds for eigenvalue and semidefinite matrix
  inequality systems.
\newblock Math. Program., Ser. B \textbf{104}(2-3), 525--540 (2005).
\newblock \doi{10.1007/s10107-005-0627-y}

\bibitem{Kla94}
Klatte, D.: On quantitative stability for non-isolated minima.
\newblock Control Cybernet. \textbf{23}(1-2), 183--200 (1994)

\bibitem{KlaKruKum12}
Klatte, D., Kruger, A.Y., Kummer, B.: From convergence principles to stability
  and optimality conditions.
\newblock J. Convex Anal. \textbf{19}(4), 1043--1072 (2012)

\bibitem{Kru03}
Kruger, A.Y.: On {F}r\'echet subdifferentials.
\newblock J. Math. Sci. \textbf{116}(3), 3325--3358 (2003)

\bibitem{Kru15.2}
Kruger, A.Y.: Error bounds and {H}\"older metric subregularity.
\newblock Set-Valued Var. Anal. \textbf{23}(4), 705--736 (2015).
\newblock \doi{10.1007/s11228-015-0330-y}

\bibitem{Kru15}
Kruger, A.Y.: Error bounds and metric subregularity.
\newblock Optimization \textbf{64}(1), 49--79 (2015).
\newblock \doi{10.1080/02331934.2014.938074}

\bibitem{Kru16}
Kruger, A.Y.: Nonlinear metric subregularity.
\newblock J. Optim. Theory Appl. \textbf{171}(3), 820--855 (2016).
\newblock \doi{10.1007/s10957-015-0807-8}

\bibitem{KruNgaThe10}
Kruger, A.Y., Ngai, H.V., Th{\'e}ra, M.: Stability of error bounds for convex
  constraint systems in {B}anach spaces.
\newblock SIAM J. Optim. \textbf{20}(6), 3280--3296 (2010).
\newblock \doi{10.1137/100782206}

\bibitem{Kum09}
Kummer, B.: Inclusions in general spaces: {H}oelder stability, solution schemes
  and {E}keland's principle.
\newblock J. Math. Anal. Appl. \textbf{358}(2), 327--344 (2009).
\newblock \doi{10.1016/j.jmaa.2009.04.060}

\bibitem{LiMor12}
Li, G., Mordukhovich, B.S.: H\"older metric subregularity with applications to
  proximal point method.
\newblock SIAM J. Optim. \textbf{22}(4), 1655--1684 (2012).
\newblock \doi{10.1137/120864660}

\bibitem{MenYan12}
Meng, K.W., Yang, X.Q.: Equivalent conditions for local error bounds.
\newblock Set-Valued Var. Anal. \textbf{20}(4), 617--636 (2012)

\bibitem{Mor06.1}
Mordukhovich, B.S.: Variational Analysis and Generalized Differentiation. {I}:
  {B}asic {T}heory, \emph{Grundlehren der Mathematischen Wissenschaften
  [Fundamental Principles of Mathematical Sciences]}, vol. 330.
\newblock Springer, Berlin (2006)

\bibitem{MorOuy15}
Mordukhovich, B.S., Ouyang, W.: Higher-order metric subregularity and its
  applications.
\newblock J. Global Optim. \textbf{63}(4), 777--795 (2015).
\newblock \doi{10.1007/s10898-015-0271-x}

\bibitem{NgaThe09}
Ngai, H.V., Th{\'e}ra, M.: Error bounds for systems of lower semicontinuous
  functions in {A}splund spaces.
\newblock Math. Program., Ser. B \textbf{116}(1-2), 397--427 (2009)

\bibitem{NgaTroThe16}
Ngai, H.V., Tron, N.H., Th{\'e}ra, M.: Directional {H}\"older metric
  regularity.
\newblock J. Optim. Theory Appl. \textbf{171}(3), 785--819 (2016).
\newblock \doi{10.1007/s10957-015-0797-6}

\bibitem{NgaTroTin17}
Ngai, H.V., Tron, N.H., Tinh, P.N.: Directional {H}\"older metric subregularity
  and application to tangent cones.
\newblock J. Convex Anal. \textbf{24}(2), 417--457 (2017)

\bibitem{Pen13}
Penot, J.P.: Calculus Without Derivatives, \emph{Graduate Texts in
  Mathematics}, vol. 266.
\newblock Springer, New York (2013).
\newblock \doi{10.1007/978-1-4614-4538-8}

\bibitem{Phe93}
Phelps, R.R.: Convex Functions, Monotone Operators and Differentiability,
  \emph{Lecture Notes in Mathematics}, vol. 1364, 2nd edn.
\newblock Springer-Verlag, Berlin (1993)

\bibitem{Roc70}
Rockafellar, R.T.: Convex Analysis.
\newblock Princeton Mathematical Series, Vol. 28. Princeton University Press,
  Princeton, NJ (1970)

\bibitem{RocWet98}
Rockafellar, R.T., Wets, R.J.B.: Variational Analysis.
\newblock Springer, Berlin (1998)

\bibitem{Sch07}
Schirotzek, W.: Nonsmooth Analysis.
\newblock Universitext. Springer, Berlin (2007)

\bibitem{WuYe02}
Wu, Z., Ye, J.J.: On error bounds for lower semicontinuous functions.
\newblock Math. Program., Ser. A \textbf{92}(2), 301--314 (2002)

\bibitem{WuYe03}
Wu, Z., Ye, J.J.: First-order and second-order conditions for error bounds.
\newblock SIAM J. Optim. \textbf{14}(3), 621--645 (2003).
\newblock \doi{10.1137/S1052623402412982}

\bibitem{YaoZhe16}
Yao, J.C., Zheng, X.Y.: Error bound and well-posedness with respect to an
  admissible function.
\newblock Appl. Anal. \textbf{95}(5), 1070--1087 (2016)

\bibitem{Ye98}
Ye, J.J.: New uniform parametric error bounds.
\newblock J. Optim. Theory Appl. \textbf{98}(1), 197--219 (1998).
\newblock \doi{10.1023/A:1022649217032}

\bibitem{Zal02}
Z{\u{a}}linescu, C.: Convex Analysis in General Vector Spaces.
\newblock World Scientific Publishing Co. Inc., River Edge, NJ (2002).
\newblock \doi{10.1142/9789812777096}

\bibitem{ZhaNgZheHe16}
Zhang, B., Ng, K.F., Zheng, X.Y., He, Q.: H\"older metric subregularity for
  multifunctions in {$\mathcal{C}^2$} type {B}anach spaces.
\newblock Optimization \textbf{65}(11), 1963--1982 (2016).
\newblock \doi{10.1080/02331934.2016.1208656}

\bibitem{ZheNg15}
Zheng, X.Y., Ng, K.F.: H\"older stable minimizers, tilt stability, and
  {H}\"older metric regularity of subdifferentials.
\newblock SIAM J. Optim. \textbf{25}(1), 416--438 (2015).
\newblock \doi{10.1137/140959845}

\bibitem{ZheZhu16}
Zheng, X.Y., Zhu, J.: Generalized metric subregularity and regularity with
  respect to an admissible function.
\newblock SIAM J. Optim. \textbf{26}(1), 535--563 (2016).
\newblock \doi{10.1137/15M1016345}

\end{thebibliography}
\end{document}